\date{}
\newcommand{\norm}[1]{\lVert#1\rVert}
\newcommand{\re}{\mathop{\mathrm{Re}}}
\newcommand{\im}{\mathop{\mathrm{Im}}}
\newcommand{\tr}{\mathop{\mathrm{Tr}}}
\newtheorem{theorem}{Theorem}[section]
\newtheorem{corollary}[theorem]{Corollary}
\newtheorem{proposition}[theorem]{Proposition}
\theoremstyle{definition}
\newtheorem{definition}[theorem]{Definition}
\newtheorem{remark}[theorem]{Remark}
\newtheorem{lemma}[theorem]{Lemma}
\numberwithin{equation}{section}\title{Mesoscopic linear statistics of Wigner matrices}
\author{A. Lodhia}
\address{Department of Mathematics, Massachusetts Institute of Technology, Cambridge, MA, USA}
\email{lodhia@mit.edu}
\author{N. J. Simm}
\address{School of Mathematical Sciences, Queen Mary University of London, London E1 4NS, UK}
\email{n.simm@qmul.ac.uk}
\begin{document}
\begin{abstract}
We study linear spectral statistics of $N \times N$ Wigner random matrices $\mathcal{H}$ on \textit{mesoscopic scales}. Under mild assumptions on the matrix entries of $\mathcal{H}$, we prove that after centering and normalizing, the trace of the resolvent $\tr(\mathcal{H}-z)^{-1}$ converges to a stationary Gaussian process as $N \to \infty$ on scales $N^{-1/3} \ll \im(z) \ll 1$ and explicitly compute the covariance structure. The limit process is related to certain regularizations of fractional Brownian motion and logarithmically correlated fields appearing in \cite{FKS13}. Finally, we extend our results to general mesoscopic linear statistics and prove that the limiting covariance is given by the $H^{1/2}$-norm of the test functions.
\end{abstract}
\maketitle

\section{Introduction}
The goal of this paper is to study the limiting fluctuations as $N \to \infty$ of the linear spectral statistic
\begin{equation}
 X^{\mathrm{meso}}_{N}(f) := \sum_{j=1}^{N}f(d_{N}(E-\lambda_{j})) \label{lsmeso}
\end{equation}
where $\lambda_{1},\ldots,\lambda_{N}$ are the eigenvalues of an $N \times N$ Wigner random matrix $\mathcal{H}$. The \textit{mesoscopic} or \textit{intermediate} scale is defined by the assumption that $d_{N} \to \infty$ as $N \to \infty$, but $d_{N}/N \to 0$ as $N \to \infty$. Therefore, if $f$ is decaying suitably at $\infty$, only a fraction $N/d_{N}$ of the total number of eigenvalues will contribute in the sum \eqref{lsmeso}. In recent years, there has been growing interest in understanding the limiting distribution of \eqref{lsmeso} on such mesoscopic scales. This interest has stemmed from, \textit{e.g.}, the appearance of novel stochastic processes in probability theory \cite{FKS13}, conductance fluctuations in disordered systems \cite{EK13a,EK13b} and linear statistics of the zeros of Riemann's zeta function \cite{BK14}, among others \cite{BD14,BEYY14,DK13}.

Previously, the majority of studies concentrated exclusively on the \textit{macroscopic} scale where $d_{N}=1$ and $E=0$ in \eqref{lsmeso}, denoted $X^{\mathrm{macro}}_{N}(f)$. In this case it was proved for many different types of random matrix ensembles that, provided $f$ has at least one derivative, the centered random variable
\begin{equation}
\tilde{X}^{\mathrm{macro}}_{N}(f) :=  X^{\mathrm{macro}}_{N}(f) - \mathbb{E}X^{\mathrm{macro}}_{N}(f) \label{fluct}
\end{equation}
converges in distribution to the normal law $\mathcal{N}(0,\sigma^{2})$ as $N \to \infty$. Furthermore, an explicit formula for the limiting variance $\sigma^{2}$ was obtained, see \cite{Joh98, BY05}. In analogy with classical probability, we refer to such results as \textit{central limit theorems} (CLTs).

Going to finer scales, the \textit{mesoscopic} fluctuations of \eqref{lsmeso} are known to be highly sensitive when compared to the macroscopic scale; in fact the CLT must break down if $d_{N}$ grows too quickly \cite{P06}. In particular if $d_{N}=N$, only a finite number of terms contribute in the sum \eqref{lsmeso} and we cannot expect a Gaussian limit. The latter case $d_{N}=N$ is known as microscopic and will not be considered in this article, though see \cite{E11} for an extensive review. Before we state our main results, let us describe the class of random matrices under consideration. 

\begin{definition}
\label{def:wig}
A \textit{Wigner matrix} is an $N \times N$ Hermitian random matrix $W$ whose entries $W_{ij}=\overline{W_{ji}}$ are centered, independent identically distributed complex random variables satisfying $\mathbb{E}|W_{ij}|^{2} = 1$ and $\mathbb{E}W_{ij}^{2}=0$ for all $i$ and $j$. We assume that the common distribution $\mu$ of $W_{ij}$ satisfies the sub-Gaussian decay $\int_{\mathbb{C}}e^{c|z|^{2}}d\mu(z) < \infty$ for some $c>0$. This implies that the higher moments are finite, in fact we have $\mathbb{E}|W_{ij}|^{q} < (Cq)^q$ for some $C > 0$. We denote by $\mathcal{H} = N^{-1/2}W$ the normalized Wigner matrix. Finally, in case all the entries $\mathcal{H}_{ij}$ are Gaussian distributed, the ensemble is known as the \textit{Gaussian Unitary Ensemble (GUE)}.
\end{definition}

This ensemble was introduced by Wigner who proved that in the limit $N \to \infty$, the mean eigenvalue distribution of the normalized Wigner matrix $\mathcal{H}$ converges to the semi-circle law. A modern version of this result (\textit{e.g.} Theorem 2.9 in \cite{AGZ09}) states that this convergence holds weakly almost surely, i.e.
\begin{equation}
\label{wig}
\frac{1}{N}\sum_{j=1}^{N}f(\lambda_{j}) \to \frac{1}{2\pi}\int_{-2}^{2}f(x)\sqrt{4-x^{2}}\,dx, \quad N \to \infty, \quad \mathrm{a.s.}
\end{equation}
for all bounded and continuous functions $f$. In order to state our main Theorem, we need a condition on the regularity and decay of the test functions $f$ entering in \eqref{lsmeso}. For $\alpha, \beta>0$, let $C^{1,\alpha,\beta}(\mathbb{R})$ denote the space of all functions with $\alpha$-H\"older continuous first derivative such that $f(x)$ and $f'(x)$ decay faster than $O(|x|^{-1-\beta})$ as $|x| \to \infty$. Finally, recall the notation $\tilde{X}_{N}^{\mathrm{meso}}(f) := X_{N}^{\mathrm{meso}}(f) - \mathbb{E}X_{N}^{\mathrm{meso}}(f)$.
\begin{theorem}
\label{th:maintheorem}
Let $\mathcal{H}$ be a normalized Wigner matrix as in Definition \ref{def:wig}. Suppose that $d_{N}=N^{\gamma}$ where $\gamma$ satisfies the condition $0 < \gamma < 1/3$ and consider test functions $f_{1},\ldots,f_{M} \in C^{1,\alpha,\beta}(\mathbb{R})$ for some $\alpha>0$ and $\beta>0$. Then for a fixed $E \in (-2,2)$ in \eqref{lsmeso} we have the convergence in distribution 
\begin{equation}
(\tilde{X}^{\mathrm{meso}}_{N}(f_{1}),\ldots, \tilde{X}^{\mathrm{meso}}_{N}(f_{M})) \overset{d}{\Longrightarrow} (X(f_1),\ldots,X(f_M)) \label{mesoconv}
\end{equation}
where $(X(f_1),\ldots,X(f_M))$ is an $M$-dimensional Gaussian vector with covariance matrix
\begin{equation}
\label{covar}
 \mathbb{E}(X(f_p)\overline{X(f_q)}) = \frac{1}{2\pi}\int_{-\infty}^{\infty}dk\,|k|\,\hat{f_{p}}(k)\overline{\hat{f_{q}}(k)}, \qquad 1 \leq p,q \leq M
\end{equation}
and $\hat{f}(k) := (2\pi)^{-1/2}\int_{\mathbb{R}}f(x)\,e^{-ikx}\,dx$.
\end{theorem}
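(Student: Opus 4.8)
The plan is to reduce the problem to a statement about the Stieltjes transform (trace of the resolvent) and then transfer back to general test functions via the Helffer--Sjöstrand formula. The first step is to prove the analogous convergence for resolvents: define $g_N(z) = \tr(\mathcal{H}-z)^{-1}$ and show that for $z = E + i\eta$ with $d_N^{-1} \ll \eta \ll 1$ (equivalently, spectral parameters of the form $E - i\,d_N^{-1} w$ with $\im w$ in a compact set), the centered, appropriately normalized process $\tilde g_N$ converges to a Gaussian process whose covariance is an explicit elementary function of the two spectral parameters. Concretely one expects $\mathbb{E}\big(\tilde g_N(z_1)\overline{\tilde g_N(z_2)}\big)$ to converge, after the mesoscopic rescaling, to something like $-\partial_{z_1}\partial_{\bar z_2}\log(z_1 - \bar z_2)$ up to constants coming from $\rho_{\mathrm{sc}}(E)$; this is the resolvent analogue of $\frac{1}{2\pi}\int |k|\hat f\overline{\hat g}$, since $|k|$ is (a multiple of) the Fourier multiplier of the $\dot H^{1/2}$ norm, and the latter is reproduced by integrating the $\log|z_1-\bar z_2|$ kernel against $f'$ and $g'$.

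To establish the resolvent CLT I would use the now-standard combination of local semicircle law estimates and a cumulant/loop-equation expansion. First invoke the local law for Wigner matrices to control $g_N(z) - N m_{\mathrm{sc}}(z)$ down to the relevant scale $\eta \gg N^{-1}$ (which is automatic since $\eta \asymp d_N^{-1} = N^{-\gamma}$ with $\gamma < 1/3 < 1$), giving a priori bounds and concentration. Then compute the limiting covariance: one writes a self-consistent equation for $\mathbb{E}\big[(\mathcal{H}-z)^{-1}_{ij}\,(\text{something})\big]$ and, using the cumulant expansion (integration by parts for the matrix entries, valid thanks to the sub-Gaussian moment bounds $\mathbb{E}|W_{ij}|^q < (Cq)^q$), derives a recursion that closes to leading order; the second-order term produces the covariance. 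The constraint $\gamma < 1/3$ enters precisely here: the error terms in the cumulant expansion involve higher resolvent entries and are of size roughly $(N\eta^3)^{-1}$ or $(N\eta)^{-1}$ times the main term, and $N\eta^3 = N^{1-3\gamma} \to \infty$ is exactly the condition $\gamma < 1/3$. For the Gaussianity (not just the covariance), I would run the standard argument of computing all joint cumulants of order $\geq 3$ of the linear statistics and showing they vanish in the limit, again via the cumulant expansion — or alternatively use a characteristic-function / Stein-type approach as in Lytova--Pastur or the more recent mesoscopic works.

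The final step is the transfer from resolvents to $f \in C^{1,\alpha,\beta}$. Using an almost-analytic extension $\tilde f$ of $f$ and the Helffer--Sjöstrand formula, $X^{\mathrm{meso}}_N(f) = \frac{1}{\pi}\int_{\mathbb{C}} \bar\partial \tilde f(z)\, \tr(\mathcal{H} d_N^{-1} + E - z)^{-1}\, dA(z)$ (with the appropriate rescaling built in), so the linear statistic is an integral of the resolvent process against a smooth density supported near the real axis. The $C^{1,\alpha}$ regularity is what makes $\bar\partial \tilde f$ integrable against the relevant weights down to the real axis, and the $|x|^{-1-\beta}$ decay controls the tails; one splits the $z$-integral into a bulk part at height $\eta \geq d_N^{-1}$, where the resolvent CLT applies directly and one passes to the limit by dominated convergence in the covariance, and a near-axis part $\eta < d_N^{-1}$, which is shown to be negligible in $L^2$ using the local law and the Hölder bound on $f'$. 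Identifying the resulting limiting covariance with $\frac{1}{2\pi}\int |k|\hat f_p(k)\overline{\hat f_q(k)}\,dk$ is then a deterministic Fourier-analytic computation: the $\log$-kernel covariance of the resolvent process, integrated against $\bar\partial \tilde f_p$ and $\overline{\bar\partial \tilde f_q}$, collapses to the $\dot H^{1/2}$ inner product of $f_p$ and $f_q$.

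The main obstacle I anticipate is the near-axis/low-regularity matching: controlling the contribution of spectral scales $\eta$ between $N^{-1}$ and $d_N^{-1}$ uniformly, and ensuring the error terms in the cumulant expansion are genuinely lower order throughout the range $d_N^{-1} \ll \eta \ll 1$ — it is here that the sharp threshold $\gamma < 1/3$ must be used carefully, and where the Hölder exponent $\alpha$ and decay exponent $\beta$ have to be tracked to make the Helffer--Sjöstrand estimates close. The Gaussianity via vanishing higher cumulants is routine in principle but bookkeeping-heavy, and care is needed because the normalization of the linear statistic is not by $N$ but is order-one (no extra $N^{-1}$), so one is really proving a CLT for a sum of $\sim N/d_N$ weakly dependent terms and the variance is $\Theta(1)$ rather than $\Theta(1/N)$.
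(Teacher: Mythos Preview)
Your overall architecture is the same as the paper's --- resolvent CLT first, then Helffer--Sj\"ostrand transfer, then Fourier identification of the covariance --- but the engine for the resolvent step is different. You propose a cumulant/loop-equation expansion (integration by parts in the matrix entries) to compute the limiting covariance, and then a separate argument that higher cumulants vanish. The paper instead runs the martingale CLT in the Bai--Silverstein style: it writes $V_N(z) = \sum_k (\mathbb{E}_k - \mathbb{E}_{k-1}) d_N^{-1}\tr G(z)$ via the Schur complement, verifies a Lyapunov condition, and computes the limit in probability of the conditional variance $\sum_k \mathbb{E}_{k-1}[Y_k(z_1)Y_k(\overline{z_2})]$. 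That computation still produces a self-consistent equation --- for the quantity $\mathcal{S}_{1,k}(z_1,z_2) = N^{-1}\sum_{p,q<k}\mathbb{E}_k(G_k(z_1))_{qp}\mathbb{E}_k(G_k(z_2))_{pq}$ --- derived by a resolvent perturbation expansion rather than a cumulant hierarchy, solved for each $k$, and then Riemann-summed over $k$; the resulting error $O_{L^1}(\sqrt{d_N^3/N})$ is precisely your $(N\eta^3)^{-1/2}$ and is the source of $\gamma < 1/3$. The martingale route buys Gaussianity from two checks rather than a full higher-cumulant analysis; your cumulant route is the one that subsequent work has pushed to the full range $\gamma<1$.

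One technical point you gloss over: for non-compactly-supported $f$ the Helffer--Sj\"ostrand integral leaks into the region $|E+\tau/d_N|>2$ outside the bulk, where the optimal local-law bound is unavailable. The paper handles this by first truncating $f$ to $f_N$ supported on $|E+x/d_N|\leq 4$ (using large-deviation bounds on the extreme eigenvalues to kill the remainder in $L^2$), and then on the out-of-bulk strip invokes the Sosoe--Wong variance bound $\mathbb{E}|V_N(\tau+i\eta)|^2 \leq C d_N^{\epsilon}\eta^{-2-\epsilon}$, valid uniformly in $\eta>0$; the $d_N^{\epsilon}$ loss is absorbed by the $|t|^{-1-\beta}$ decay of $f$. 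Also, in rescaled coordinates the near-axis cutoff is at $\eta\sim d_N/N$ (the local-law threshold), not at $\eta\sim d_N^{-1}$; below that threshold the same Sosoe--Wong bound, combined with $|\overline{\partial}\Psi_f|\leq c\eta^{\alpha}$, makes the contribution vanish.
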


This result improves and extends earlier work of Boutet de Monvel and Khorunzhy \cite{BdMK99b} who proved Theorem \ref{th:maintheorem} when $0 < \gamma < 1/8$, $M=1$ and $f(x) = (x-z)^{-1}$ (see also Theorem \ref{th:vn} below). Erd\"os and Knowles proved an analogue of Theorem \ref{th:maintheorem} for random band matrices \cite{EK13b}, including a bound on the variance of \eqref{lsmeso} in the Wigner case with the same condition $0 < \gamma < 1/3$ \cite{EK13a}. Apart from these works, CLTs for \eqref{lsmeso} were also obtained in several other ensembles \cite{BdMK99a,S00,FKS13,DK13,BD14,BEYY14}. Although these works extend to scales $0 < \gamma < 1$, the proofs rely on exact formulas for the distribution of the eigenvalues, which are unavailable in the Wigner setting. 

Let us now make some general remarks about Theorem \ref{th:maintheorem}. On macroscopic scales $\gamma=E=0$ the results are different for Wigner matrices since the limiting covariance depends on the fourth moment of the matrix entries \cite{BS04,Sh11}. On mesoscopic scales with fixed $E \in (-2,2)$, we show that this difference vanishes, indicating a particularly strong form of universality for formula \eqref{covar} (see also \cite{KKP96}). As with the local regime, the limiting distribution of \eqref{lsmeso} is \textit{universal} in the choice $E \in (-2,2)$ around which ones samples the eigenvalues. Apparently unique to the mesoscopic regime, however, is the scale invariance of the limiting Gaussian process: formula \eqref{covar} is unchanged after rescaling the arguments of the test functions by any parameter (see also Section \ref{se:mesofrac}). Optimal conditions on the test functions given in Theorem \ref{th:maintheorem} remains a significant issue ever since the seminal work of Johansson \cite{Joh98}. The latter article suggests that in the macroscopic regime, only finiteness of the limiting variance should suffice to conclude asymptotic Gaussianity, see \cite{SW13} for recent progress in this direction. In the mesoscopic regime we believe analogously that optimal conditions for asymptotic Gaussianity of \eqref{lsmeso} should be that $\int_{\mathbb{R}}|k||\hat{f}(k)|^{2}\,dk < \infty$. It is historically interesting to remark that \eqref{covar} already appeared in a famous $1963$ paper of Dyson and Mehta \cite{DM63}.

From a probabilistic viewpoint, the semi-circle law \eqref{wig} may be interpreted as the law of large numbers for the eigenvalues of a Wigner matrix; this may be considered the first natural step for the probabilist. The second natural step is to prove the CLT for the macroscopic fluctuations around \eqref{wig}. Going now to mesoscopic scales, only the first step has been investigated in detail for Wigner matrices, with the corresponding results known as \textit{local} semi-circle laws, so called because they track the convergence to the semi-circle closer to the scale of individual eigenvalues. The local semi-circle laws turned out to be a very important tool in proving the long-standing universality conjectures for eigenvalue statistics at the microscopic scale \cite{Meh04,E11,TV11b}. Consequently, a number of refinements of Wigner's semi-circle law of increasing optimality were obtained in recent years \cite{ESY09,EYY12rig,EYY12,EKTY13}. Such results will play a crucial role in our proof of the CLT at mesosopic scales.  

In order to state the local semi-circle law, it is convenient to work with the resolvent $G(z) = (\mathcal{H}-z)^{-1}$, $\im(z)>0$. Then according to \eqref{wig}, the \textit{Stieltjes transform} of $\mathcal{H}$ 
\begin{equation}
s_{N}(z) := N^{-1}\mathrm{Tr}G(z)
\end{equation}
should be close to the Stieltjes transform of the semi-circle:
\begin{equation}
s(z) := \frac{1}{2\pi}\int_{-2}^{2}(x-z)^{-1}\sqrt{4-x^{2}}\,dx
\end{equation}
The local semi-circle law shows that this convergence remains valid at mesoscopic scales $\mathrm{Im}(z) = O(d_{N}^{-1})$ for $1 \ll d_{N} \ll N$. The following is the latest version of this result (in our notation).
\begin{theorem}[Cacciapuoti, Maltsev, Schlein, 2014] \cite[Theorem 1 (i)]{CMS14}
	\label{thm:optimalbound}
	Fix $\tilde{\eta} > 0$ and let $z = t+i\eta$ with $t \in \mathbb{R}$ and $\eta>0$ fixed. Then there are constants $M_0, N_0, C,c, c_0 > 0$ such that
	\begin{equation}
		\mathbb{P}\left(\left|s_{N}(E+z/d_{N})-s(E+z/d_{N})\right| > \frac{Kd_N}{N\eta}\right) \leq (Cq)^{cq^{2}}K^{-q}
	\end{equation}
	for all $\frac{\eta}{d_N} \leq \tilde{\eta}$,$|E + t/d_N| \leq 2 + \eta/d_N$, $N > N_0$ such that $\frac{N\eta}{d_N} \geq M_0$, and  $ q \leq c_0\left(\frac{N\eta}{d_N}\right)^{1/8}$.
\end{theorem}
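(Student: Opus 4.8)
The plan is to establish the bound by the self-consistent equation (Schur complement) method combined with a fluctuation averaging estimate, working at the rescaled spectral parameter $\zeta := E + z/d_N$, for which $\im\zeta = \eta/d_N$. Abbreviate $G := (\mathcal{H}-\zeta)^{-1}$, $s_N := s_N(\zeta)$, $s := s(\zeta)$; for each $i$ let $G^{(i)}$ be the resolvent at $\zeta$ of the minor of $\mathcal{H}$ with the $i$th row and column deleted, put $s_N^{(i)} := N^{-1}\tr G^{(i)}$, and let $\mathbf{a}_i$ be the $i$th column of $\mathcal{H}$ with its $i$th entry removed. The Schur complement identity reads
\begin{equation}
\label{eq:schur}
G_{ii}^{-1} = \mathcal{H}_{ii} - \zeta - s_N^{(i)} - Z_i, \qquad Z_i := \mathbf{a}_i^{*} G^{(i)} \mathbf{a}_i - s_N^{(i)},
\end{equation}
and, since $\mathbb{E}|\mathcal{H}_{kl}|^2 = 1/N$ and $\mathbb{E}\mathcal{H}_{kl}^2 = 0$, we have $\mathbb{E}[Z_i \mid G^{(i)}] = 0$. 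Averaging \eqref{eq:schur} over $i$ will reduce everything to controlling $N^{-1}\sum_i Z_i$ together with the stability of the self-consistent equation $s^2 + \zeta s + 1 = 0$.

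First I would record the a priori inputs. Rank-one interlacing gives $|s_N^{(i)} - s_N| \le (N\im\zeta)^{-1} = d_N/(N\eta)$ deterministically, and a preliminary weak local semicircle law (as in \cite{EYY12}) — available since $\im\zeta \gg N^{-1}$ under the hypothesis $N\eta/d_N \ge M_0$ — furnishes $|G_{ii}| = O(1)$, $\im s_N \asymp 1$ (using that $E$ lies in the bulk), and $|s_N - s| = o(1)$ off an event of probability at most $(Cq)^{cq^2}N^{-q}$. Next I would control $Z_i$: conditionally on $G^{(i)}$, the Hanson--Wright inequality, the sub-Gaussian moment bound of Definition~\ref{def:wig}, and the Ward identity $\sum_l |G^{(i)}_{kl}|^2 = (\im G^{(i)}_{kk})/\im\zeta$ (so that $\|G^{(i)}\|_{\mathrm{HS}}^2 = N\im s_N^{(i)}/\im\zeta$) give
\begin{equation}
\label{eq:hw}
\left(\mathbb{E}\bigl[\,|Z_i|^{2q} \,\big|\, G^{(i)}\,\bigr]\right)^{1/(2q)} \lesssim \sqrt{q}\left(\frac{\im s_N^{(i)}}{N\im\zeta}\right)^{1/2} + \frac{q}{N\im\zeta}.
\end{equation}
For $q$ in the stated range the first term dominates, so \eqref{eq:schur} and \eqref{eq:hw} already yield $|G_{ii} - s| \lesssim \sqrt{q}\,(d_N/(N\eta))^{1/2}$ with the indicated probability, but only at the suboptimal rate.

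The heart of the matter is then the \emph{fluctuation averaging}: although each $G_{ii}-s$ is of order $(d_N/(N\eta))^{1/2}$, the average $N^{-1}\sum_i(G_{ii}-s)$ — which governs $s_N - s$ — is smaller by a further factor $(N\im\zeta)^{-1/2}$. Following Erd\H{o}s--Yau--Yin, I would expand $\mathbb{E}\bigl|N^{-1}\sum_i Z_i\bigr|^{2q}$ and use the independence of the rows of $\mathcal{H}$ and $\mathbb{E}[Z_i \mid G^{(i)}] = 0$ to show that only heavily paired index configurations survive the expectation, each contributing an extra factor $(N\im\zeta)^{-1}$; tracking the combinatorial constants produces the tail $(Cq)^{cq^2}K^{-q}$, and the restriction $q \le c_0(N\eta/d_N)^{1/8}$ appears as the range in which this high-moment estimate beats the trivial bound. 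Finally, since $E$ is fixed in the open interval $(-2,2)$, the self-consistent equation $s^2 + \zeta s + 1 = 0$ is stably invertible (a perturbation of size $\delta$ moves the solution by $O(\delta)$, with none of the square-root degeneration occurring at $\pm 2$); averaging \eqref{eq:schur}, inserting the fluctuation averaging bound for $N^{-1}\sum_i Z_i$ and the trivial bound $N^{-1}\sum_i \mathcal{H}_{ii} = O(N^{-1/2})$, and passing through this stability estimate gives $|s_N - s| \le K d_N/(N\eta)$ off the claimed event, by Markov's inequality at the $q$th moment.

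The main obstacle is the fluctuation averaging step. Upgrading the elementary $1/\sqrt{N\im\zeta}$ fluctuation of a single quadratic form to the optimal $1/(N\im\zeta)$ decay of the average requires a delicate high-moment cancellation argument, and carrying it out with explicit constants — so as to extract the precise tail $(Cq)^{cq^2}K^{-q}$ and the sharp threshold $q \le c_0(N\eta/d_N)^{1/8}$ — under only a sub-Gaussian rather than bounded entry hypothesis is where the real work lies. The bootstrap that promotes the weak a priori law to the strong one, run along a decreasing sequence of scales $\im\zeta$ down to $N\im\zeta \asymp M_0$, is a further technical ingredient but follows the established template of \cite{ESY09,EYY12,CMS14}.
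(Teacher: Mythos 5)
First, a point of orientation: the paper does not prove Theorem \ref{thm:optimalbound} at all --- it is imported verbatim from \cite[Theorem 1 (i)]{CMS14} and used as a black box. So there is no internal proof to compare against; what you have written is a reconstruction of the strategy of \cite{CMS14} itself (Schur complement, Hanson--Wright with the Ward identity, fluctuation averaging, stability of $s^2+\zeta s+1=0$, and a bootstrap in $\im\zeta$), and as a strategy it is the right one.

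That said, two genuine gaps remain if this is to stand as a proof. First, the fluctuation averaging step is only named, not carried out, and it is precisely this step that produces everything quantitative in the statement: the optimal rate $Kd_N/(N\eta)$ rather than $(d_N/(N\eta))^{1/2}$, the tail $(Cq)^{cq^{2}}K^{-q}$ (the $q^{2}$ in the exponent comes from the iterated resolvent expansions in the high-moment computation), and the threshold $q\le c_0(N\eta/d_N)^{1/8}$. Asserting that ``only heavily paired index configurations survive'' is the conclusion of a delicate combinatorial argument, not its proof; in particular the $Z_i$ and $Z_j$ are correlated through the shared minors $G^{(ij)}$, and these correlations must be resolved by further resolvent expansions before the conditional centering $\mathbb{E}[Z_i\mid G^{(i)}]=0$ can be exploited --- this is exactly where the stated exponents arise. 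Second, your stability argument explicitly assumes the bulk, ``with none of the square-root degeneration occurring at $\pm 2$'', whereas the theorem is stated for all $|E+t/d_N|\le 2+\eta/d_N$, i.e.\ up to and including the spectral edge --- and the paper genuinely uses it there: Proposition \ref{prop:bulkBound} is invoked on the full domain $|E+\tau/d_N|\le 2$, whose boundary touches the edge. Near $\pm 2$ the two roots of $s^2+\zeta s+1=0$ nearly collide, the inverse of the linearized self-consistent equation blows up like $(\kappa+\eta/d_N)^{-1/2}$ with $\kappa$ the distance to the edge, and the naive ``a perturbation of size $\delta$ moves the solution by $O(\delta)$'' fails; one needs the standard dichotomy/continuity-in-$\eta$ argument to select the correct root and still extract the $1/(N\im\zeta)$ bound for the averaged quantity. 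Neither gap is a wrong turn --- both are filled in \cite{CMS14} --- but as written the argument does not establish the theorem in the generality in which the paper uses it.
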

To prove Theorem \ref{th:maintheorem}, we will start by proving it for the special case of the resolvent $f(x) = (x-z)^{-1}$. Indeed, one can interpret $s_{N}(E+z/d_{N})$ as a random process on the upper-half plane $\mathbb{H}$ and ask whether, after appropriate centering and normalization, a universal limiting process exists. We will show that the function $(N/d_{N})(s_{N}(E+z/d_{N})-\mathbb{E}s_{N}(E+z/d_{N}))$ converges to the $\Gamma'^{+}\textit{-processes}$. These are certain analytic-pathed Gaussian processes defined on $\mathbb{H}$. 

\subsection{Mesoscopic statistics and regularized fractional Brownian motion with $H=0$}
\label{se:mesofrac}
Fractional Brownian motion is a continuous time Gaussian process $B_{H}(t)$ indexed by a number $H \in (0,1)$ and having covariance
\begin{equation}
\mathbb{E}(B_{H}(t)B_{H}(s)) = c_{H}(|t|^{2H}+|s|^{2H}-|t-s|^{2H}) \label{covarfbm}
\end{equation}
where $c_{H}$ is a normalization constant. A generalization of the usual Brownian motion ($H=1/2$), these processes are characterized by their fundamental properties of stationary increments, scale invariance (\textit{i.e.} $B_{H}(at) = a^{H}B_{H}(t)$) and Gaussianity. The parameter $H$ is known as the \textit{Hurst index} and describes the raggedness of the resulting stochastic motion, with the limit of vanishingly small $H$ to be considered the most irregular (see \textit{e.g.} Proposition 2.5 in \cite{Taq2003}). Although the fBm processes were invented by Kolmogorov, they were very widely popularized due to a famous work of Mandelbrot and van Ness \cite{ManvNess68} and since have appeared prominently across mathematics, engineering and finance, among other fields, see \cite{LSSW14} for a survey of fractional Gaussian fields. Until recently however, no relation between the fBm processes and random matrix theory was known. The latter relation (discovered in \cite{FKS13}) goes via the limit $H \to 0$ and the following regularization
\begin{equation}
 B^{(\eta)}_{H}(t) := \frac{1}{2\sqrt{2}}\int_{0}^{\infty}\frac{e^{-\eta s}}{s^{1/2+H}}\left([e^{-its}-1]B_{\mathrm{c}}(ds)+[e^{its}-1]\overline{B_{\mathrm{c}}(ds)}\right) \label{reg}
\end{equation}
where $B_{\mathrm{c}}(s) := B_{1}(s)+iB_{2}(s)$ and $B_1, B_2$ are independent copies of standard Brownian motion. One can verify that in the limit $\eta \to 0$, one recovers precisely the fractional Brownian motion, \textit{i.e.} $B^{(0)}_{H}(t) = B_{H}(t)$. On the other hand, taking instead the limit $H \to 0$ in \ref{reg}, one obtains a process $B^{(\eta)}_{0}(t)$, about which the following was proved:
\begin{theorem}[\cite{FKS13} Fyodorov, Khoruzhenko and Simm]
\label{th:compactconv}
For a GUE random matrix $\mathcal{H}_{\mathrm{GUE}}$, consider the sequence of stochastic processes
\begin{equation}
 W^{(\eta)}_{N}(t) := \log\bigg{|}\det\left(\mathcal{H}_{\mathrm{GUE}}-E-\frac{\tau+i\eta}{d_{N}}\right)\bigg{|}-\log\bigg{|}\det\left(\mathcal{H}_{\mathrm{GUE}}-E-\frac{i\eta}{d_{N}}\right)\bigg{|} \label{guechar}
\end{equation}
and $\tilde{W}^{(\eta)}_{N}(t) :=  W^{(\eta)}_{N}(t) - \mathbb{E}(W^{(\eta)}_{N}(t))$. On any mesoscopic scales of the form $d_{N} \to \infty$ with $d_{N} = o(N/\log(N))$ and with fixed $\tau \in \mathbb{R}$, $\eta>0$ and $E \in (-2,2)$, the process $\tilde{W}^{(\eta)}_{N}$ converges weakly in $L^{2}[a,b]$ to $B^{(\eta)}_{0}$ as $N \to \infty$. 
\end{theorem}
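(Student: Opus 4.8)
The plan is to write $W^{(\eta)}_N$ as an integral of the resolvent and transport the convergence of the rescaled Stieltjes transform — the object studied in the rest of the paper — through that integral. Since $\eta>0$ is fixed, the eigenvalues of $\mathcal{H}_{\mathrm{GUE}}$ stay at distance $\eta/d_N$ from the line $\{E+(s+i\eta)/d_N:s\in\mathbb{R}\}$, so $s\mapsto\sum_j\log|\lambda_j-E-(s+i\eta)/d_N|$ is smooth and, differentiating under the sum,
\begin{equation}
W^{(\eta)}_N(t)=-\re\int_0^t\frac{1}{d_N}\,\tr G\!\Bigl(E+\frac{s+i\eta}{d_N}\Bigr)\,ds,\qquad G(z):=(\mathcal{H}_{\mathrm{GUE}}-z)^{-1}.
\end{equation}
Setting $G_N(\zeta):=\frac{N}{d_N}\bigl(s_N-\mathbb{E}s_N\bigr)\bigl(E+\zeta/d_N\bigr)=\frac{1}{d_N}\bigl(\tr G-\mathbb{E}\tr G\bigr)\bigl(E+\zeta/d_N\bigr)$ we get $\tilde W^{(\eta)}_N(t)=-\re\int_0^t G_N(s+i\eta)\,ds$, so the whole problem is reduced to the behaviour of the random holomorphic function $G_N$ on $\mathbb{H}$ together with the continuity of the linear map $F\mapsto\bigl(t\mapsto-\re\int_0^t F(s+i\eta)\,ds\bigr)$.

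The core step is to show that $G_N$ converges in distribution, in the topology of locally uniform convergence on $\mathbb{H}$, to the Gaussian analytic function $\Gamma$ (the $\Gamma'^{+}$-process of the introduction, i.e.\ the natural counterpart of \eqref{covar} for resolvent test functions) characterised by
\begin{equation}
\mathbb{E}\bigl(\Gamma(z)\overline{\Gamma(w)}\bigr)=-\frac{1}{(z-\bar w)^2},\qquad \mathbb{E}\bigl(\Gamma(z)\Gamma(w)\bigr)=0,\qquad z,w\in\mathbb{H}.
\end{equation}
For GUE I would prove this by the cumulant formula for linear statistics of determinantal point processes: each joint cumulant of $\bigl(\tr G(z_1),\dots,\tr G(z_m)\bigr)$ is a finite sum of traces of alternating products of the Hermite kernel $K_N$, and inserting the Plancherel--Rotach (sine kernel) asymptotics of $K_N$ at scale $1/d_N$ shows that, after the $N/d_N$ normalisation, the second cumulant produces $\mathbb{E}\bigl(\Gamma(z)\overline{\Gamma(w)}\bigr)=-(z-\bar w)^{-2}$ from the opposite-half-plane part of the kernel product, $\mathbb{E}\bigl(\Gamma(z)\Gamma(w)\bigr)=0$ (the same-half-plane part being lower order since $s(\cdot)$ is nearly constant on mesoscopic scales), and all cumulants of order $\ge3$ are $o(1)$; controlling the errors in the kernel asymptotics gives this uniformly for $d_N=o(N/\log N)$. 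Tightness of $\{G_N\}$ as holomorphic functions is then Montel's theorem together with the a priori bound $\sup_{\zeta\in K}\mathbb{E}|G_N(\zeta)|^2=O(1)$ on compacts $K\subset\mathbb{H}$, which is immediate from Theorem~\ref{thm:optimalbound}: it yields $\bigl|(s_N-\mathbb{E}s_N)(E+\zeta/d_N)\bigr|=O\bigl(d_N/(N\im\zeta)\bigr)$ with overwhelming probability, hence $|G_N(\zeta)|=O(1/\im\zeta)$.

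Granting this, the rest is routine. For fixed $\eta>0$ the contour $\{s+i\eta:s\in[\min(a,0),\max(b,0)]\}$ lies in a fixed compact subset of $\mathbb{H}$, so the above linear map is continuous from holomorphic functions on $\mathbb{H}$ into $C[a,b]$, and by the continuous mapping theorem $\tilde W^{(\eta)}_N\Rightarrow\tilde W^{(\eta)}:=-\re\int_0^{\cdot}\Gamma(s+i\eta)\,ds$ weakly in $C[a,b]$, hence weakly in $L^2[a,b]$. (Alternatively, finite-dimensional convergence follows from the cumulant computation and tightness in $C[a,b]$ from Kolmogorov's criterion, since $\tilde W^{(\eta)}_N(t)-\tilde W^{(\eta)}_N(s)=-\re\int_s^t G_N(u+i\eta)\,du$ has second moment $\le|t-s|^2\sup_u\mathbb{E}|G_N(u+i\eta)|^2=O(|t-s|^2)$.) Both $\tilde W^{(\eta)}$ and $B^{(\eta)}_0$ are centred Gaussian, so it remains to match covariances. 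Using $\mathbb{E}\bigl(\re\Gamma(z)\,\re\Gamma(w)\bigr)=\tfrac12\re\,\mathbb{E}\bigl(\Gamma(z)\overline{\Gamma(w)}\bigr)=-\tfrac12\re\bigl[(z-\bar w)^{-2}\bigr]$ (the pseudo-covariance vanishing) and a double integration one gets
\begin{equation}
\mathbb{E}\bigl(\tilde W^{(\eta)}(t)\tilde W^{(\eta)}(s)\bigr)=-\tfrac12\re\int_0^t\!\int_0^s\frac{du\,dv}{(u-v+2i\eta)^2}=\tfrac14\biggl[\log\Bigl(1+\tfrac{t^2}{4\eta^2}\Bigr)+\log\Bigl(1+\tfrac{s^2}{4\eta^2}\Bigr)-\log\Bigl(1+\tfrac{(t-s)^2}{4\eta^2}\Bigr)\biggr],
\end{equation}
and the same expression comes out of \eqref{reg} on using $\mathbb{E}\bigl(B_{\mathrm{c}}(ds)\overline{B_{\mathrm{c}}(ds')}\bigr)=2\delta(s-s')\,ds$, $\mathbb{E}\bigl(B_{\mathrm{c}}(ds)B_{\mathrm{c}}(ds')\bigr)=0$ and $\int_0^\infty s^{-1}e^{-2\eta s}(1-\cos as)\,ds=\tfrac12\log\bigl(1+a^2/(4\eta^2)\bigr)$. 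Hence $\tilde W^{(\eta)}\overset{d}{=}B^{(\eta)}_0$.

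The main obstacle is the second step, and within it the cumulant estimates: making the sine-kernel approximation of the Hermite kernel quantitative at scale $1/d_N$ with errors good enough to kill all cumulants of order $\ge3$ and to pin the second cumulant, uniformly down to $d_N=o(N/\log N)$. The reduction to the resolvent, the continuous-mapping transfer, the a priori tightness bound, and the covariance identification are all soft once that input is in hand; in the narrower regime $0<\gamma<1/3$ one could import the resolvent estimates behind Theorem~\ref{th:maintheorem} instead of the exact-formula argument.
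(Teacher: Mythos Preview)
The paper does not actually prove this theorem: it is quoted as a result of \cite{FKS13}. What the paper does contain is a short argument (the paragraph around equations \eqref{charpoly}--(1.20)) showing how the analogous statement for Wigner matrices follows from Theorem~\ref{th:vn}; that argument is precisely your reduction. Namely, the paper writes $\tilde W^{(\eta)}_N(\tau)=\int_0^\tau\re V_N(t+i\eta)\,dt$, invokes the uniform second-moment bound on $V_N$ coming from Theorem~\ref{thm:optimalbound} (Proposition~\ref{prop:bulkBound}) for tightness, combines this with the finite-dimensional convergence $V_N\Rightarrow\Gamma'^{+}_0$, and concludes $\int_0^\tau\re V_N\,dt\Rightarrow\int_0^\tau\re\Gamma'^{+}_0\,dt\overset{d}{=}B^{(\eta)}_0(\tau)$. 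Your steps~1, 3--5 are exactly this, including the covariance identification, which the paper leaves implicit.

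The only substantive difference is your step~2. For Wigner matrices the paper obtains $V_N\Rightarrow\Gamma'^{+}_0$ via the martingale CLT of Section~\ref{se:res}, which forces the restriction $d_N\ll N^{1/3}$. For GUE on the full range $d_N=o(N/\log N)$ you instead propose the determinantal cumulant route (Plancherel--Rotach asymptotics of the Hermite kernel), which is indeed the method of \cite{FKS13}. You correctly flag that the real work sits there: one must control the error in the sine-kernel approximation sharply enough to annihilate all cumulants of order $\ge 3$ after rescaling, uniformly down to $d_N=o(N/\log N)$. That is not carried out in your proposal, and it is not carried out in the present paper either; it is the content of the cited reference. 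Everything else in your outline is correct and matches the paper's own treatment.
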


In particular, this gives a functional (in $L^{2}$) version of the CLT of Theorem \ref{th:maintheorem} for GUE random matrices with $f_{k}(x) = \log|x-\tau_{k}-i\eta|-\log|x-i\eta|$. Either by computing the resulting $H^{1/2}$ norm \eqref{covar} or by computing the covariance of $B^{(\eta)}_{0}$ as defined in \eqref{reg}, one finds the logarithmic correlations
\begin{equation}
\mathbb{E}((B^{(\eta)}_{0}(t)-B^{(\eta)}_{0}(s))^{2}) = \frac{1}{2}\log\left(\frac{(t-s)^{2}}{\eta^{2}}+1\right). \label{b0corr}
\end{equation}
Thus $B^{(\eta)}_{0}$ inherits many of the fundamental properties of fBm, including Gaussianity, stationary increments, although now one has the `regularized self-similarity' $B^{(a\eta)}_{0}(at) \overset{d}{=} B^{(\eta)}_{0}(t)$ (the latter following from the scale invariance of the inner product \eqref{covar}). More generally, Gaussian fields with logarithmic correlations have received a great deal of recent attention across mathematics and physics, see \cite{FDR12,FK14} and references therein. The most famous example of such a field is undoubtedly the 2D Gaussian Free Field (GFF) \cite{GFF}, which has important applications in areas such as quantum gravity \cite{DRSV14}, Gaussian multiplicative chaos and Stochastic Loewner Evolution \cite{SS13}. The GFF is also believed to play a central role in random matrix theory. For example, similarly to \eqref{guechar}, it has appeared in relation to the characteristic polynomial, either explicitly \cite{VR97,YHN11} or in what appear to be its various one-dimensional slices \cite{HKOC01,FKS13,W14}. More recently it has appeared as the height function for the minor processes of random matrices \cite{BG13,B14}. 

Going now to the Stieltjes transform $s_{N}(z)$ of Theorem \ref{thm:optimalbound}, the trivial relation $\frac{N}{d_{N}}\re\{s_{N}(\tau+i\eta)\} = \frac{\partial}{\partial \tau}W^{(\eta)}_{N}(\tau)$ suggests that the appropriate limiting object should be related to the derivative of $B^{(\eta)}_{0}(\tau)$. Although such a derivative could obviously be represented by differentiating inside the Fourier integral in \eqref{reg}, it can also be conveniently represented by a random \textit{series}. More generally, for $z \in \mathbb{H}$ and Hurst index $H<1$, define the following `Cayley' series
\begin{equation}
\Gamma'^{+}_{H}(z) := \frac{1}{\sqrt{2}}\left(\frac{z+i}{2}\right)^{2H-2}\sum_{k=0}^{\infty}\sqrt{\frac{\Gamma(2-2H+k)}{\Gamma(2-2H)k!}}\left(\frac{z-i}{z+i}\right)^{k}(\xi^{(1)}_{k}+i\xi^{(2)}_{k}) \label{series}
\end{equation}
where $\{\xi^{(1)}_{k},\xi^{(2)}_{k}\}_{k=0}^{\infty}$ are a family of real \textit{i.i.d.} standard Gaussians. A quick computation with the series \eqref{series} shows that it has zero mean and covariance structure
\begin{equation}
\mathbb{E}(\Gamma'^{+}_{H}(z_1)\overline{\Gamma'^{+}_{H}(z_2)}) = \frac{1}{(i(z_1-\overline{z_2}))^{2-2H}}. \label{cogamma}  
\end{equation}
with $\mathbb{E}(\Gamma'^{+}_{H}(z_1)\Gamma'^{+}_{H}(z_2))=0$. It follows that the processes $\Gamma'^{+}_{H}$ are stationary on horizontal line segments of the complex plane (\textit{c.f.} the stationary increments \eqref{b0corr} for the integrated version). The $\Gamma'^{+}$-processes were originally introduced by Unterberger \cite{U09} in the context of geometric rough path theory and stochastic partial differential equations, but since then the relation to random matrix theory has apparently gone unnoticed. We will show that $\Gamma'^{+}_{0}(z)$ is directly related to a fundamental object of random matrix theory: the normalized trace of the resolvent.

\begin{theorem}
\label{th:vn}
Consider the resolvent $G(z) = (\mathcal{H}-z)^{-1}$. Under the same assumptions as Theorem \ref{th:maintheorem}, the centered and normalized trace
\begin{equation}
 V_{N}(z) := \frac{1}{d_{N}}\left(\tr G(E+z/d_{N})-\mathbb{E}\tr G(E+z/d_{N})\right), \quad \im(z)>0
\end{equation}
converges in the sense of finite-dimensional distributions to $\Gamma'^{+}_{0}(z)$ as $N \to \infty$. That is, for any finite set of points $z_1$,\ldots,$z_M$ in the upper half-plane $\mathbb{H}$, we have
\begin{equation}
(V_{N}(z_1),\ldots,V_{N}(z_M)) \overset{d}{\Longrightarrow} (\Gamma'^{+}_{0}(z_1),\ldots,\Gamma'^{+}_{0}(z_M)), \qquad N \to \infty. \label{vnfindimconv}
\end{equation}
Furthermore, the process $V_{N}$ is tight in the space $\mathcal{U}(D)$ of continuous functions defined on a bounded $N$-independent rectangle $D \subset \mathbb{H}$ and $V_{N}$ converges weakly to $\Gamma'^{+}_{0}$ in $\mathcal{U}(D)$.
\end{theorem}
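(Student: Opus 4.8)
We establish the two assertions of Theorem~\ref{th:vn}---finite-dimensional convergence \eqref{vnfindimconv} and tightness of $V_N$ in $\mathcal{U}(D)$---by combining the cumulant (loop-equation) method with the local semicircle law of Theorem~\ref{thm:optimalbound}.

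\emph{Finite-dimensional convergence.} Fix $z_1,\dots,z_M\in\mathbb{H}$ and set $w_j:=E+z_j/d_N$. Since $\overline{V_N(z)}$ is the same functional evaluated at $\overline{z}$, it suffices to prove that the joint cumulants of the family $\{d_N^{-1}(\tr G(u)-\mathbb{E}\tr G(u))\}$, with each $u$ drawn from $\{w_1,\dots,w_M,\overline{w_1},\dots,\overline{w_M}\}$, satisfy: the second-order ones converge to the values prescribed by \eqref{cogamma} with $H=0$, and the pseudo-covariances to $0$ (so that the limit is a proper complex Gaussian vector), while every cumulant of order $r\ge3$ tends to $0$; the method of cumulants then yields convergence to $(\Gamma'^{+}_{0}(z_1),\dots,\Gamma'^{+}_{0}(z_M))$. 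To compute these cumulants I would expand $\tr G-\mathbb{E}\tr G$ through the resolvent identity $\mathcal{H}G=I+zG$ and apply, repeatedly, the generalized cumulant expansion, schematically
\[
\mathbb{E}\big[\mathcal{H}_{ij}\,\Phi(\mathcal{H})\big]\;=\;\sum_{k\ge1}\frac{1}{k!}\sum_{\kappa}\kappa\,\mathbb{E}\big[\partial^{k}\Phi(\mathcal{H})\big],
\]
the inner sum running over the order-$k$ (joint) cumulants $\kappa$ of $\mathcal{H}_{ij},\overline{\mathcal{H}_{ij}}$ with the matching mixed derivatives $\partial^{k}$. The sub-Gaussian hypothesis of Definition~\ref{def:wig} makes $\kappa_{k}(\mathcal{H}_{ij})=O((Ck)^{k}N^{-k/2})$, so the expansion truncates at a large fixed order with negligible remainder, and $\mathbb{E}\mathcal{H}_{ij}^{2}=0$ removes the leading off-diagonal term. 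Every resolvent entry $G_{ij}(u)$ so produced is then replaced by $s(u)\delta_{ij}$ with an error governed by Theorem~\ref{thm:optimalbound}, whose control of arbitrarily high moments is what absorbs the combinatorial weights of the repeated differentiation. Organizing the terms by their number of free summation indices, the $r$-th cumulant of the normalized traces reduces to a leading expression of size $O(N^{2-r})$---a universal polynomial in $s,s'$ at the relevant $u$, together with a fourth-cumulant contribution discussed below---plus error terms which, on the scale $\im(w_j)\asymp d_N^{-1}$, are controllable only when $0<\gamma<1/3$; in that range only $r=2$ survives, which gives asymptotic Gaussianity, and the covariance is pinned down by a self-consistent equation whose Wigner solution is
\[
\operatorname{Cov}\!\left(\tfrac{1}{d_N}\tr G(u),\,\tfrac{1}{d_N}\tr G(u')\right)\;=\;\frac{1}{d_N^{2}}\left(\frac{s'(u)\,s'(u')}{(s(u)-s(u'))^{2}}-\frac{1}{(u-u')^{2}}\right)\;+\;\frac{\kappa_{4}\cdot O(1)}{d_N^{2}}\;+\;o(1),
\]
the fourth-cumulant piece being a bounded function of $(u,u')$ near $(E,E)$, hence negligible after the $d_N^{-2}$ prefactor---this is the mechanism behind the strong universality noted in the introduction.

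\emph{Identification of the limit.} It remains to evaluate the displayed limit for $u,u'\in\{w_j,\overline{w_j}\}$ as $w_j=E+z_j/d_N\to E\in(-2,2)$. Expanding $s(E+\zeta)=s(E\pm i0)+s'(E\pm i0)\,\zeta+O(\zeta^{2})$ according to the sign of $\im(\zeta)$, and using $2s(E\pm i0)+E=\pm i\sqrt{4-E^{2}}$, $s'=-s/(2s+w)$ and $|s(E\pm i0)|=1$: when $u,u'$ lie on opposite sides of $\mathbb{R}$ one has $s(u)-s(u')\to\pm i\sqrt{4-E^{2}}\neq0$ while $(u-u')^{-2}$ blows up, so the bracket is dominated by $-(u-u')^{-2}$; when $u,u'$ lie on the same side, $s(u)-s(u')\to0$ and a short Taylor computation shows the two bracketed terms cancel at leading order, leaving an $O(1)$ covariance. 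In the same-side case the $d_N^{-2}$ prefactor then sends the limiting covariance to $0$, reproducing $\mathbb{E}(\Gamma'^{+}_{0}(z_p)\Gamma'^{+}_{0}(z_q))=0$; in the opposite-side case $u=w_p$, $u'=\overline{w_q}$, and since $u-u'=(z_p-\overline{z_q})/d_N$,
\[
\frac{1}{d_N^{2}}\,\operatorname{Cov}\big(\tr G(w_p),\tr G(\overline{w_q})\big)\;\longrightarrow\;-\frac{1}{(z_p-\overline{z_q})^{2}}\;=\;\frac{1}{(i(z_p-\overline{z_q}))^{2}},
\]
which is exactly \eqref{cogamma} with $H=0$; thus the limiting Gaussian vector has the covariance structure of $(\Gamma'^{+}_{0}(z_1),\dots,\Gamma'^{+}_{0}(z_M))$.

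\emph{Tightness, and the main obstacle.} On a bounded rectangle $D$ with $\overline{D}\subset\mathbb{H}$ the map $z\mapsto\tr G(E+z/d_N)$ is analytic, so $V_N$ is an analytic random function near $D$; being centered, $\mathbb{E}|V_N(z)|^{2}=\var(V_N(z))$, which the variance estimate above (equivalently the bound of Erd\H{o}s and Knowles) controls uniformly on a slightly enlarged rectangle $D'$, and Theorem~\ref{thm:optimalbound} upgrades this, via a standard net argument, to $\sup_N\sup_{z\in D'}\mathbb{E}|V_N(z)|^{p}\le C(p,D')$ for every $p$. Writing increments through Cauchy's integral formula then gives $\mathbb{E}|V_N(z)-V_N(z')|^{p}\le C|z-z'|^{p}$ on $D$, so $\{V_N\}$ is tight in $\mathcal{U}(D)$ by the Kolmogorov--Chentsov criterion; since $\Gamma'^{+}_{0}$ admits an analytic version through the series \eqref{series}, tightness together with the finite-dimensional convergence yields $V_N\Rightarrow\Gamma'^{+}_{0}$ in $\mathcal{U}(D)$. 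The genuinely delicate step is the uniform control of the cumulant-expansion errors when $\im(w_j)=\im(z_j)/d_N$ is only of order $d_N^{-1}$: all resolvent entries must then be estimated at spectral parameters at distance $\asymp d_N^{-1}$ from the real axis, where Theorem~\ref{thm:optimalbound} is sharp but no better, while the accumulated errors must still beat the $d_N^{2}$ enhancement of the leading variance; balancing these two effects is precisely what restricts the range to $0<\gamma<1/3$ and is the reason the high-moment form of the local law is needed.
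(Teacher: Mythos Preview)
Your approach is genuinely different from the paper's, and at the level of a sketch it is plausible, but the comparison and a caveat are worth spelling out.

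\textbf{Finite-dimensional convergence.} The paper does \emph{not} use the cumulant/loop-equation expansion. Instead it follows the Bai--Silverstein martingale method: write $V_N(z)$ as $\sum_k (\mathbb{E}_k-\mathbb{E}_{k-1})d_N^{-1}\tr G$, verify the Lindeberg condition, and compute the conditional variance $\sum_k\mathbb{E}_{k-1}[Y_k(z_1)Y_k(\overline{z_2})]$. Via the Schur complement this reduces to a sum $\mathcal{S}_{1,k}$ of off-diagonal resolvent entries, for which a self-consistent equation is derived by a Green-function perturbation expansion (removing one matrix entry at a time). Solving this and summing over $k$ yields the covariance $d_N^{-2}\,s'(z_1)s'(\overline{z_2})/(1-s(z_1)s(\overline{z_2}))^2$, which is algebraically equivalent to your formula and collapses to $(i(z_1-\overline{z_2}))^{-2}$ mesoscopically. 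The $O((d_N^3/N)^{1/2})$ errors that force $\gamma<1/3$ arise concretely from the ``maximally expanded'' terms in this perturbation scheme. Your cumulant route (closer to Khorunzhy--Khoruzhenko--Pastur or He--Knowles) would in principle yield the same restriction, but you assert rather than prove the key estimate that cumulants of order $r\ge3$ vanish with errors controllable for $\gamma<1/3$; this is exactly the place where the paper spends most of its effort, and a complete argument along your lines would require a comparable amount of bookkeeping.

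\textbf{Tightness.} Here the paper again takes a different and somewhat more circuitous path: it writes $V_N(z_1)-V_N(z_2)=(z_1-z_2)\,\tilde{X}_N^{\mathrm{meso}}(h)$ for $h(x)=((x-z_1)(x-z_2))^{-1}$, then bounds $\mathbb{E}|\tilde{X}_N^{\mathrm{meso}}(h)|^2$ through the Helffer--Sj\"ostrand representation and the variance bounds of Propositions~\ref{prop:bulkBound} and~\ref{prop:sw}. Your argument---bound $\mathbb{E}|V_N(w)|^p$ uniformly on an enlarged rectangle via Theorem~\ref{thm:optimalbound}, then transfer to increments by Cauchy's formula and conclude by Kolmogorov--Chentsov---is cleaner and exploits analyticity more directly; it also has the advantage of giving moment exponents strictly larger than $2$, which one actually needs for the Kolmogorov criterion in a two-real-dimensional parameter space (the paper's stated second-moment bound is borderline here and implicitly leans on analyticity). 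Both routes are valid; yours is arguably more efficient for this particular step.
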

\begin{proof}
For the finite-dimensional convergence in \eqref{vnfindimconv}, see Section \ref{se:res}. The tightness condition in $\mathcal{U}(D)$ follows from Corollary \ref{cor:arzasc}.
\end{proof}

Intuitively, the underlying reason for the covariance structure \eqref{cogamma} (with $H=0$) appearing in random matrix theory can be traced back to the fundamental relation with the sine-kernel
\begin{equation}
\lim_{\eta_1,\eta_2 \to 0}\lim_{N \to \infty}\mathbb{E}(V_{N}(t_1+i\eta_1)\overline{V_{N}(t_2+i\eta_2)})\bigg|_{d_{N}=N} = \left(\frac{1}{\pi}\frac{\sin(\pi(t_1-t_2))}{(t_1-t_2)}\right)^{2} \label{micro}
\end{equation}
where (heuristically) going to slightly larger scales $d_{N} = N^{\gamma}$ with $0 < \gamma < 1$ has the effect of a large time separation $|t_1-t_2|$ smoothening out the oscillations in the numerator, thus reproducing \eqref{cogamma} with $H=0$ (see \textit{e.g.} \cite{BZ93,P06} for additional heuristics). 

Theorem \ref{th:compactconv} can now be easily extended to Wigner matrices, starting with the identity
\begin{equation}
W^{(\eta)}_{N}(\tau) = \int_{0}^{\tau}\re(V_{N}(t+i\eta))\,dt. \label{charpoly}
\end{equation}
Next, by the rigidity of Theorem \ref{thm:optimalbound}, we have $\mathbb{E}|V_{N}(t+i\eta)|^{2}$ bounded uniformly on compact subsets of $t$ and $\eta \in [\delta,\infty)$ for fixed $\delta>0$ (see Proposition \ref{prop:bulkBound}). Then a standard tightness argument (see \textit{e.g.} \cite{G76}) combined with \eqref{vnfindimconv} allows us to conclude the convergence in distribution as $N \to \infty$,
\begin{equation}
 \int_{0}^{\tau}\re(V_{N}(t+i\eta))\,dt \longrightarrow \int_{0}^{\tau}\re(\Gamma'^{+}_{0}(t+i\eta))\,dt \overset{d}{=} B^{(\eta)}_{0}(\tau).
\end{equation}
This implies that $W^{(\eta)}_{N} \overset{d}{\to} B^{(\eta)}_{0}$, though now in the Wigner case, subject to a more restricted growth of the parameter $d_{N}$ than in Theorem \ref{th:compactconv}. In all cases considered here, optimal conditions on the growth of $d_{N}$ should be anything asymptotically slower than the microscopic scale, \textit{i.e.} we expect our main results to hold provided only that $d_{N} = o(N)$. 

\subsection{Strategy of the proof}
Our proof of Theorem \ref{th:vn} will follow closely the approach popularised by Bai and Silverstein \cite{BS10,BS04}. The technique begins by exploiting the independence of the matrix entries of $\mathcal{H}$ to write $\tr G(z)$ as a sum of martingale differences. Then a classical version of the martingale CLT implies that only $2$ estimates are required in order to conclude asymptotic Gaussianity. For the \textit{macroscopic} regime, this technique was applied successfully to conclude CLTs for many random matrix ensembles, though not without significant computations \cite{BS04,RS06,BWZ09,TV12,BGGM13,RR14}. The mesoscopic regime is characterized by the situation that $\im(z) = O(d_{N}^{-1})$ as $N \to \infty$, which is further problematic in that the majority of bounds for resolvents involve powers of $\im(z)^{-1}$. To overcome this we use the rigid control provided by Theorem \ref{thm:optimalbound} many times, but for technical reasons we were not able to avoid obtaining estimates of order $N^{-1}\im(z)^{-3}$. Such estimates are the source of the restriction on $d_{N}$ in Theorem \ref{th:maintheorem}.

To pass Theorem \ref{th:vn} onto the general linear statistic \eqref{lsmeso} of Theorem \ref{th:maintheorem}, we use an exact formula (see Lemma \ref{lem:extension}):
\begin{equation}
\tilde{X}^{\mathrm{meso}}_{N}(f) = \frac{1}{\pi}\re\int_{0}^{\infty}\int_{-\infty}^{\infty}V_{N}(\tau+i\eta)\overline{\partial} \Psi_{f}(\tau,\eta)\,d\tau\,d\eta \label{genint}
\end{equation}
where $\overline{\partial} := \frac{\partial}{\partial \tau} + i\frac{\partial}{\partial \eta}$ and $\Psi_f$ is a certain 2-dimensional extension of $f$, known as an \textit{almost-analytic extension} \cite{davies1995functional}. Since $\overline{\partial}\Psi_{f}$ is deterministic, we can use our CLT for $V_{N}(\tau+i\eta)$ to conclude a CLT for $\tilde{X}^{\mathrm{meso}}_{N}(f)$. The main problem there is to interchange the distributional convergence for $V_{N}$ with the integrals appearing in \eqref{genint}. To perform such an interchange it will suffice to prove a certain tightness condition which will boil down to having sharp control on $\mathbb{E}|V_{N}(\tau+i\eta)|^{2}$ in the various regimes of $\tau$ and $\eta$. In the bulk of the Wigner semi-circle with $\eta/d_{N} \gg N^{-1}$, the optimal bound of Theorem \ref{thm:optimalbound} plays a key role, since earlier estimates involving $\log(N)$ and $N^{\epsilon}$ factors would lead to a divergent estimate in the mesoscopic regime. In the regions outside the bulk, or with with very small imaginary part $\eta/d_{N} \ll N^{-1}$, we employ the recent variance estimates of \cite{SW13} (see Proposition \ref{prop:sw}) which have the advantage of holding uniformly in $\eta>0$, but the disadvantage of an additional factor $d_{N}^{\epsilon}$ appearing in the bound. In this way we are able to remove the assumption of very rapid decay, which appears in most studies on the mesoscopic regime \cite{S00,BD14,BEYY14}. In contrast, there is no decay requirement in the macroscopic regime and the main important characteristic is the regularity of $f$ \cite{SW13}, while here the decay adds an additional complexity to the problem. It remains an interesting open problem to push our CLT closer to optimal conditions on the decay and regularity of $f$, and on the spectral scale $d_{N}$.

The structure of this paper is as follows. In Section \ref{se:res} we prove the finite-dimensional convergence in Theorem \ref{th:vn} on scales $1 \ll d_{N} \ll N^{1/3}$. In Section \ref{se:testfn} we extend the obtained results to compactly supported functions $f \in C^{1,\alpha}_{\mathrm{c}}(\mathbb{R})$ and show how to replace the assumption of compact support with a suitable decay condition on $f$. Finally, a brief Appendix is included.\\

\textbf{Acknowledgements:} Both authors wish to express thanks to Alice Guionnet for suggesting the main techniques used in the paper. The first author in particular wishes to express his gratitude to Alice Guionnet, who provided helpful advice and support through the NSF grant 6927980 ``\textit{Random Matrices, Free Probability and the enumeration of maps}''. The second author wishes to express his gratitude to Yan Fyodorov, Anna Maltsev and J\'er\'emie Unterberger for stimulating discussions. N. J. Simm was supported on EPSRC grant EP/J002763/1 ``\textit{Insights into Disordered Landscapes via Random Matrix Theory and Statistical Mechanics}''. \\\\

\section{Convergence in law of the Stieltjes transform}
\label{se:res}
The goal of this section is to prove the following:
\begin{proposition}
\label{prop:findim}
Let $z_{1},\ldots,z_{M}$ be $M$ fixed numbers in the upper half of the complex plane $\mathbb{H}$. Under the same assumptions as Theorem \ref{th:vn}, the function $V_{N}$ converges in the sense of finite-dimensional distributions to $\Gamma'^{+}_{0}$, i.e. we have the convergence in law
\begin{equation}
\label{findimconv}
(V_{N}(z_{1}),\ldots,V_{N}(z_{M})) \to (\Gamma'^{+}_{0}(z_{1}),\ldots,\Gamma'^{+}_{0}(z_{M})), \qquad N \to \infty,
\end{equation}
where $\Gamma'^{+}_{0}(z)$ is a Gaussian process on $\mathbb{H}$ with covariance $C(z_1,z_2)$ defined by
\begin{equation}
\mathbb{E}(\Gamma'^{+}_{0}(z_1)\overline{\Gamma'^{+}_{0}(z_2)}) = \frac{1}{(i(z_1-\overline{z_2}))^{2}}
\end{equation}
and
\begin{equation}
\mathbb{E}(\Gamma'^{+}_{0}(z_1)\Gamma'^{+}_{0}(z_2)) = 0
\end{equation}
\end{proposition}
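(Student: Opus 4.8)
The plan is to follow the Bai–Silverstein martingale strategy. First I would reduce to proving the convergence of the rescaled, centered Stieltjes transform at a finite collection of points; by the Cramér–Wold device it suffices to show that every real-linear combination $\sum_{j} (a_j \re V_N(z_j) + b_j \im V_N(z_j))$ converges to a centered Gaussian with the variance dictated by the covariance kernel $C(z_1,z_2) = (i(z_1-\bar z_2))^{-2}$ together with the vanishing of the pseudo-covariance. Write $\tr G(E+z/d_N) = \sum_{k=1}^N \mathbb{E}_k[\tr G] - \mathbb{E}_{k-1}[\tr G]$, where $\mathbb{E}_k$ conditions on the first $k$ rows/columns of $\mathcal{H}$; this expresses $d_N V_N(z)$ as a sum of martingale differences $Y_k(z)$. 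Using the Sherman–Morrison / Schur complement identity one has the standard formula $\mathbb{E}_k[\tr G] - \mathbb{E}_{k-1}[\tr G] = -(\mathbb{E}_k - \mathbb{E}_{k-1})\,\partial_z \log(\ldots)$, more precisely $Y_k(z) = (\mathbb{E}_{k}-\mathbb{E}_{k-1}) \frac{1 + \mathbf{h}_k^* G^{(k)} (G^{(k)})? \mathbf{h}_k}{\ldots}$ — concretely, if $G^{(k)}$ denotes the resolvent with the $k$-th row and column removed and $\mathbf{h}_k$ the $k$-th column, then $\tr G - \tr G^{(k)} = -\frac{1 + \mathbf{h}_k^* (G^{(k)})^2 \mathbf{h}_k}{ \mathcal{H}_{kk} - z - \mathbf{h}_k^* G^{(k)} \mathbf{h}_k}$, so $Y_k(z) = (\mathbb{E}_k - \mathbb{E}_{k-1})\big[{-}\frac{1 + \mathbf{h}_k^* (G^{(k)})^2 \mathbf{h}_k}{\mathcal{H}_{kk} - z - \mathbf{h}_k^* G^{(k)} \mathbf{h}_k}\big]$.

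To apply the martingale CLT (e.g. Theorem 35.12 in Billingsley), two things must be verified: (i) the conditional Lindeberg condition, $\sum_k \mathbb{E}_{k-1}|Y_k|^2 \mathbbm{1}_{|Y_k| > \epsilon} \to 0$, which follows from an $L^4$ bound $\sum_k \mathbb{E}|Y_k|^4 \to 0$; and (ii) convergence of the bracket, $\sum_k \mathbb{E}_{k-1}[Y_k(z_p)\overline{Y_k(z_q)}] \to$ the limiting covariance, and similarly $\sum_k \mathbb{E}_{k-1}[Y_k(z_p)Y_k(z_q)] \to 0$. For step (i) I would use the concentration of quadratic forms $\mathbf{h}_k^* A \mathbf{h}_k$ around $N^{-1}\tr A$ (Hanson–Wright type bounds, available under the sub-Gaussian assumption of Definition \ref{def:wig}), combined with the a priori rigidity estimate of Theorem \ref{thm:optimalbound} to control $G^{(k)}$ and $(G^{(k)})^2$ at spectral scale $\eta/d_N$; the interlacing $s_N(z) \approx s(E) + O(d_N/(N\eta))$ controls the denominator away from zero since $E \in (-2,2)$ forces $\im s(E) > 0$. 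For step (ii), the key is to replace the random quadratic forms inside $Y_k$ by their deterministic counterparts using these same concentration and rigidity inputs, reducing the sum to an explicit expression in $s(E)$ and its derivatives. One computes that the leading contribution to $\sum_k \mathbb{E}_{k-1}[Y_k(z_p)\overline{Y_k(z_q)}]$ is, after rescaling, governed by $\frac{1}{N}\sum_k \big(\frac{1 + s'(E)\cdot(\ldots)}{(\ldots)}\big)$-type Riemann sums that converge, and the identity $s'(E) = \frac{s(E)^2}{1 - s(E)^2}$ (from $s^2 + 1 = -zs$, or here $s(E)^2 + Es(E) + 1 = 0$ at the real point, but I would work with $z$ slightly complex and take limits carefully) lets one collapse everything to $(i(z_p - \bar z_q))^{-2}$; the Gaussian-divisibility of the entries and the assumption $\mathbb{E}W_{ij}^2 = 0$ make the pseudo-covariance vanish and — crucially for the universality claim in the remark after Theorem \ref{th:maintheorem} — eliminate the fourth-cumulant term that survives at the macroscopic scale, since that term carries an extra factor which is $o(1)$ once $\eta = O(d_N^{-1}) \to 0$.

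The main obstacle I anticipate is precisely step (ii): controlling the error in replacing $\mathbf{h}_k^* (G^{(k)})^m \mathbf{h}_k$ by $\frac{1}{N}\tr (G^{(k)})^m$ and then $\frac{1}{N}\tr(G^{(k)})^m$ by $s^{(m-1)}(E)/(m-1)!$-type quantities, uniformly over $k$ and with enough precision that the accumulated error over the $N$ terms of the martingale sum still vanishes. Each such replacement costs a factor that involves negative powers of $\eta/d_N$, and since $\eta/d_N \asymp N^{-1+\gamma-o(1)}$ is only slightly larger than $N^{-1}$, one is forced to track these powers carefully; this is the origin of the $N^{-1}\im(z)^{-3} = N^{-1} d_N^3$ error terms mentioned in the strategy section, and of the restriction $\gamma < 1/3$. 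I would handle this by systematically using Theorem \ref{thm:optimalbound} in the form ``$|s_N^{(k)}(E + z/d_N) - s(E+z/d_N)| \lesssim d_N/(N\eta)$ with overwhelming probability,'' differentiating it in $z$ via Cauchy's integral formula on a circle of radius $\sim \eta$ to get analogous bounds for traces of powers of the resolvent, and being careful that the final bound on $\sum_k \mathbb{E}|Y_k|^4$ and on the fluctuation of $\sum_k \mathbb{E}_{k-1}|Y_k|^2$ around its mean are both $o(1)$ exactly when $d_N^3/N \to 0$. Once the bracket convergence and Lindeberg condition are in place, the martingale CLT yields joint asymptotic normality of the real and imaginary parts at $z_1,\dots,z_M$ with the stated covariance, which is Proposition \ref{prop:findim}.
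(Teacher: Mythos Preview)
Your overall framework is correct and matches the paper: reduce via Cram\'er--Wold to a single linear combination, decompose $V_N$ as a martingale using the Schur complement, and verify the Lindeberg condition and the bracket convergence for the martingale CLT. Your treatment of Lindeberg via a higher-moment (Lyapunov) bound using Hanson--Wright concentration plus the rigidity of Theorem~\ref{thm:optimalbound} is essentially what the paper does.

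The genuine gap is in step~(ii), the bracket computation. You propose to ``replace the random quadratic forms inside $Y_k$ by their deterministic counterparts'' and arrive at a Riemann sum in $s(E)$ and its derivatives. This is not how the limiting covariance emerges, and direct substitution does not work. After the first round of simplifications (replacing the denominator by $-s(z)$, etc.), the leading contribution to $\sum_k \mathbb{E}_{k-1}[Y_k(z_1)\overline{Y_k(z_2)}]$ is
\[
\frac{1}{d_N^2}\,\partial_{z_1}\partial_{z_2}\Big[s(z_1)s(z_2)\sum_{k=1}^N \mathbb{E}_{k-1}\big[\mathbb{E}_k\delta^{k,1}_N(z_1)\,\mathbb{E}_k\delta^{k,1}_N(z_2)\big]\Big],
\]
and integrating out the $k$-th column entries $W_{ik}$ for $i<k$ leaves the quantity
\[
\mathcal{S}_{1,k}(z_1,z_2)=\frac{1}{N}\sum_{p<k,\,q<k}\mathbb{E}_k(G_k(z_1))_{qp}\,\mathbb{E}_k(G_k(z_2))_{pq}.
\]
This is \emph{not} close to $N^{-1}\tr(G_k(z_1)G_k(z_2))$ or to any expression in $s$ alone: the restriction $p,q<k$ and the separate $\mathbb{E}_k$-conditioning on each factor break any simple trace identity, so rigidity and concentration by themselves do not reduce it to deterministic quantities. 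The paper's essential step (Proposition~\ref{prop:s1}) is to derive, via a Green-function perturbation expansion with ``maximally expanded'' error terms, the approximate self-consistent equation
\[
(z_1+s(z_1))\,\mathcal{S}_{1,k}\;\approx\;-\frac{N-k}{N}s(z_2)\big(1+\mathcal{S}_{1,k}\big),
\]
with error $O_{L^1}((d_N^3/N)^{1/2})$; this is where the restriction $\gamma<1/3$ actually enters. Solving it gives $\mathcal{S}_{1,k}\approx \tfrac{N-k}{N}\,\frac{s(z_1)s(z_2)}{1-\tfrac{N-k}{N}s(z_1)s(z_2)}$, and the Riemann sum over $k$ becomes $\int_0^1 \frac{u\,s(z_1)s(z_2)}{1-u\,s(z_1)s(z_2)}\,du$. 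Only after differentiating and using the mesoscopic asymptotic $1-s(z_1)s(\overline{z_2})\sim c\,d_N^{-1}(\tau_1-\tau_2+i(\eta_1+\eta_2))$ does one obtain $(i(z_1-\overline{z_2}))^{-2}$; the same computation with $s(z_1)s(z_2)$ (no conjugate) has no such singularity and gives the vanishing pseudo-covariance. Your proposal does not contain this mechanism, and the ``$\tfrac{1}{N}\sum_k(1+s'(E)\cdot\ldots)/(\ldots)$'' sketch does not correspond to it.
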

To prove Proposition \ref{prop:findim}, it is enough to fix a linear combination 
\begin{equation}
\mathcal{Z}_{M} := \sum_{p=1}^{M}c_{p}V_{N}(z_{p}) = \sum_{p=1}^{M}c_{p}\frac{1}{d_{N}}(\mathrm{Tr}(G(E+z_{p}/d_{N}))-\mathbb{E}\mathrm{Tr}(G(E+z_{p}/d_{N})))
\end{equation}
and to prove that $\mathcal{Z}_{M}$ converges in distribution to a Gaussian random variable with the appropriate variance. Our starting point is that $\mathcal{Z}_{M}$ can be expressed as a sum of martingale differences, to which a classical version of the martingale CLT can be applied, see Theorem \ref{thm:martclt}. To satisfy the conditions of the martingale CLT we shall follow the technique outlined in Chapter $9$ of the book \cite{BS10} of Bai and Silverstein, which is equivalent to the work \cite{BY05}. Our approach is also valid at the \textit{macroscopic} scales considered in \cite{BY05} and we feel gives a somewhat more accessible proof in this case.

\subsection{Method of martingales and some preliminaries}
Here we outline the martingale method and provide the notation used in the remainder of this Section. Let $\mathbb{E}_{k}$ denote the conditional expectation with respect to the $\sigma$-algebra generated by the upper-left $k \times k$ corner of the Wigner matrix $W$. Then we have the martingale decomposition
\begin{equation}
\mathcal{Z}_{M} = \sum_{k=1}^{N}X_{k,N} \label{martdecomp}
\end{equation}
where
\begin{equation}
X_{k,N} := (\mathbb{E}_{k}-\mathbb{E}_{k-1})\sum_{p=1}^{M}c_{p}\frac{1}{d_{N}}\mathrm{Tr}(G(E+z_{p}/d_{N}))
\end{equation}
Therefore, to prove Proposition \ref{prop:findim}, it will suffice to check the following two conditions:
\begin{enumerate}
	\item \textit{The Lindeberg condition:} for all $\epsilon>0$, we have
		\begin{equation}
			\sum_{k=1}^N \mathbb{E}(|X_{k,N}|^{2} \mathbf{1}_{|X_{k,N}|>\epsilon})\to 0, \quad N \to \infty. \label{lindeberg}
		\end{equation}
	\item \textit{Conditional variance:} we have the convergence in probability
\begin{align}
	&\sum_{k=1}^N \mathbb{E}_{k-1}[|X_{k,N}|^{2}] \rightarrow \sum_{l,m = 1}^M c_l \overline{c_m}C(z_l,\overline{z_m}), \quad N \to \infty, \label{covarconv}\\
	&\sum_{k=1}^N \mathbb{E}_{k-1}[X_{k,N}^{2}] \rightarrow \sum_{l,m = 1}^M c_l c_m C(z_l,z_m), \quad N \to \infty, \label{covarconv2}
	\end{align}
\end{enumerate}
where $C(z_l,z_m) = (i(\tau_l-\tau_m+i(\eta_l+\eta_m)))^{-2}$ denotes the covariance in Proposition \ref{prop:findim}.\\

Before we proceed with the proof of these conditions, we provide some of the relevant notation.\\

\textbf{\textit{Important notation:}} Until now the complex numbers $z_{p}$ were independent of $N$. For notational convenience and in the remainder of this Section only, we will now allow the implicit $N$-dependence
\begin{equation}
z_{p} := E+\frac{\tau_{p}+i\eta_{p}}{d_{N}} \label{zn}
\end{equation}
As before, the sequence $d_{N} \to \infty$ as $N \to \infty$ with $d_{N}/N \to 0$ and $\tau_{p}, \eta_{p}$ are fixed real numbers with $\eta_{p} \neq 0$. We fix $E \in (-2+\delta,2-\delta)$ strictly inside the support of the limiting semi-circle for some small $\delta>0$. 

Let $\mathcal{H}_{k}$ be the $N-1 \times N-1$ Wigner matrix obtained by erasing the $k^{\mathrm{th}}$ row and column from $\mathcal{H}$. We denote by $G_{k}(z) = (\mathcal{H}_{k}-z)^{-1}$ the corresponding resolvent. The following formula, a consequence of the \textit{Schur complement formula} from Linear Algebra, will play an important role:
\begin{equation}
\tr(G(z))-\tr(G_{k}(z)) = \frac{1+h_{k}^{\dagger}G_{k}(z)^{2}h_{k}}{\mathcal{H}_{kk}-z-h_{k}^{\dagger}G_{k}(z)h_{k}}
\end{equation}
where $h_{k}$ is the $k^{\mathrm{th}}$ column of $\mathcal{H}$ with the $k^{\mathrm{th}}$ entry removed. 

Recall the following standard notation for convergence of random variables in $L^{p}$. For a sequence of random variables $\{X_{N}\}_{N=1}^{\infty}$, we write $X_{N} = O_{L^{p}}(u(N))$ to mean there exists a constant $c$ such that $\mathbb{E}|X_{N}|^{p} \leq cu(N)$ for all $N$ large enough. We will repeatedly use the standard fact that if $X_{N}$ converges to $X$ in probability and $Y_{N}$ converges to zero in $L^{p}$, $p\geq 1$, then $X_{N}+Y_{N}$ converges to $X$ in probability.

We start with the proof of the Lindeberg condition \eqref{lindeberg} which follows from the following stronger result (due to the trivial inequality $|X_{k,N}|^2\mathbf{1}_{|X_{k,N}|>\epsilon} \leq \epsilon^{2-\delta}|X_{k,N}|^\delta$ with $\delta>2$):

\begin{lemma}[Lyapunov]
For all mesoscopic scales $1 \ll d_{N} \ll N^{1-\epsilon}$ with $\epsilon>0$, there is an integer $\delta>2$ such that
\begin{equation}
\sum_{k=1}^{N}\mathbb{E}\bigg{|}(\mathbb{E}_{k}-\mathbb{E}_{k-1})\sum_{p=1}^{M}c_{p}\frac{1}{d_{N}}\tr G(z) \bigg{|}^{\delta} \to 0, \qquad N \to \infty.
\end{equation}
\end{lemma}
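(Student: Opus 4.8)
The goal is to show that the martingale differences $X_{k,N}=(\mathbb{E}_k-\mathbb{E}_{k-1})\sum_p c_p d_N^{-1}\tr G(z_p)$ obey a Lyapunov-type bound $\sum_k \mathbb{E}|X_{k,N}|^\delta\to 0$ for some even integer $\delta>2$. By the triangle inequality in $L^\delta$ and the fact that $M$ is fixed, it suffices to bound each summand $\mathbb{E}|(\mathbb{E}_k-\mathbb{E}_{k-1})\tr G(z)|^\delta$ for a single $z=z_p=E+(\tau+i\eta)/d_N$, and then multiply by $d_N^{-\delta}$ and sum over $k$. The standard first move is to replace $\tr G(z)$ by $\tr G(z)-\tr G_k(z)$ inside the operator $\mathbb{E}_k-\mathbb{E}_{k-1}$: since $\tr G_k(z)$ is measurable with respect to both the $k$-th and $(k-1)$-th $\sigma$-algebras (it does not involve the $k$-th row/column), $(\mathbb{E}_k-\mathbb{E}_{k-1})\tr G_k(z)=0$, so $X_{k,N}=(\mathbb{E}_k-\mathbb{E}_{k-1})\,\Xi_k(z)$ where $\Xi_k(z):=\tr G(z)-\tr G_k(z)$ is given by the Schur complement formula quoted above.

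**Key estimates.** Applying Jensen/contraction for conditional expectation, $\mathbb{E}|(\mathbb{E}_k-\mathbb{E}_{k-1})\Xi_k(z)|^\delta\le 2^\delta\,\mathbb{E}|\Xi_k(z)|^\delta$, so everything reduces to an $L^\delta$ bound on the Schur quantity
\[
\Xi_k(z)=\frac{1+h_k^\dagger G_k(z)^2 h_k}{\mathcal H_{kk}-z-h_k^\dagger G_k(z) h_k}.
\]
For the numerator, one uses that $h_k$ has i.i.d.\ entries of variance $N^{-1}$ independent of $G_k$, so concentration of quadratic forms (the standard large-deviation bound for $h_k^\dagger A h_k$ around $N^{-1}\tr A$, valid under the sub-Gaussian moment assumption of Definition~\ref{def:wig}) gives $h_k^\dagger G_k^2 h_k = N^{-1}\tr G_k^2 + O_{L^\delta}(\text{fluctuation})$, and crucially $|N^{-1}\tr G_k(z)^2|=|N^{-1}\,\tfrac{d}{dz}\tr G_k(z)|$ is controlled; this is where the local semicircle law / rigidity of Theorem~\ref{thm:optimalbound} enters, giving $\tr G_k(z)$ close to $N s(z)$ and hence the numerator of size $O(1)$ (with polynomial-in-$d_N$ corrections). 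For the denominator, the real part is essentially $-\re z + h_k^\dagger\re\{-G_k(z)\}h_k$, but the cleanest route is to bound $|\Xi_k(z)|$ directly and deterministically: since the denominator equals $(\tr G-\tr G_k)^{-1}$ times the numerator, and a general resolvent-difference identity gives $|\Xi_k(z)|\le \eta_N^{-1}$ where $\eta_N:=\im(z)=\eta/d_N$ — more precisely, from $\im\,\Xi_k(z)\ge 0$ and the Schur form one extracts $|\Xi_k(z)|\le C\,(1+ h_k^\dagger G_k(z)^2 h_k \text{-type terms})/\im(\text{denominator})$, and $\im(\text{denominator})\ge \eta_N(1+h_k^\dagger|G_k|^2 h_k)$. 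Combining, $|\Xi_k(z)|\le C$ deterministically up to the quadratic-form fluctuations, and on the (overwhelmingly likely) good event where rigidity holds one gets $\mathbb{E}|\Xi_k(z)|^\delta\le C_\delta$ uniformly in $k$ (the bad event is handled by the crude bound $|\Xi_k(z)|\le 2\eta_N^{-1}=2d_N/\eta$ together with the super-polynomially small probability $(Cq)^{cq^2}K^{-q}$ from Theorem~\ref{thm:optimalbound}, choosing $q$ large).

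**Putting it together and the main obstacle.** With $\sup_k \mathbb{E}|\Xi_k(z)|^\delta\le C_\delta$ (uniform in $N$, using $d_N\ll N^{1-\epsilon}$ so that $d_N/N\to 0$ kills all the correction factors), the sum is
\[
\sum_{k=1}^N \mathbb{E}\Big|(\mathbb{E}_k-\mathbb{E}_{k-1})\sum_{p=1}^M c_p \tfrac{1}{d_N}\tr G(z_p)\Big|^\delta \;\le\; \frac{C_{\delta,M}}{d_N^{\delta}}\cdot N\cdot \sup_{k,p}\mathbb{E}|\Xi_k(z_p)|^\delta \;\le\; \frac{C'_{\delta,M}\,N}{d_N^{\delta}},
\]
which tends to $0$ provided $N/d_N^\delta\to 0$. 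Since $d_N\gg \log$-type lower bound, $d_N=N^\gamma$ covers $d_N\ge N^{c}$ for arbitrarily small $c$ only along the relevant family, but the statement asks for \emph{some} $\delta>2$: for $d_N\gg N^{a}$ with $a>0$ we take $\delta>2/a$; to cover all $1\ll d_N\ll N^{1-\epsilon}$ including slowly growing $d_N$ one instead keeps the sharper per-term bound $\mathbb{E}|\Xi_k|^\delta\le C_\delta\, \eta_N^{-(\delta-2)}\cdot(\text{something }o(1))$ — i.e.\ extract two powers of $\im(z)^{-1}$ worth of cancellation from the concentration estimate rather than using the deterministic bound — giving $\sum_k \le C N d_N^{-\delta}\eta_N^{-(\delta-2)}=C N d_N^{-\delta}(d_N/\eta)^{\delta-2}=C N\eta^{-(\delta-2)} d_N^{-2}\to 0$ since $d_N\to\infty$. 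The main obstacle, therefore, is not the combinatorics but obtaining the \emph{uniform-in-$k$, uniform-in-$N$} $L^\delta$ bound on the Schur ratio in the mesoscopic regime $\im(z)=\eta/d_N\to 0$: one must verify that the quadratic-form concentration inequalities and the rigidity estimate of Theorem~\ref{thm:optimalbound} combine to keep $\mathbb{E}|\Xi_k(z)|^\delta$ bounded (with at worst a harmless $d_N^{o(1)}$ or $\eta_N^{-(\delta-2)}$ factor) rather than blowing up like $\eta_N^{-\delta}$, and to handle the bad event where the denominator is anomalously small via the super-polynomial tail bound in Theorem~\ref{thm:optimalbound} with $q$ chosen as a suitable power of $N\eta/d_N$.
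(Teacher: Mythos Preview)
Your proposal follows essentially the same route as the paper: reduce to $M=1$ by the triangle inequality, replace $\tr G(z)$ by $\Xi_k(z)=\tr G(z)-\tr G_k(z)$ inside $(\mathbb{E}_k-\mathbb{E}_{k-1})$, apply conditional Jensen, and bound $\sum_k d_N^{-\delta}\mathbb{E}|\Xi_k(z)|^\delta$ via the Schur complement. The paper carries this out more cleanly than you do: it writes $d_N^{-1}\Xi_k = d_N^{-1}\bigl(1+\delta_N^{k,2}+N^{-1}\tr G_k^2\bigr)G_{kk}$ and bounds the three factors separately in $L^q$ using the concentration lemmas (their Lemmas~\ref{d1bound}, \ref{lem:derivbound}, \ref{le:offdiag}), arriving directly at $\mathbb{E}|d_N^{-1}\Xi_k|^\delta=O(d_N^{-\delta}+(d_N/N)^{\delta/2})$ and hence $\sum_k=O\bigl(N d_N^{-\delta}+N(d_N/N)^{\delta/2}\bigr)$. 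Your path to the same bound is correct in outline but muddled in places: the remark that ``the denominator equals $(\tr G-\tr G_k)^{-1}$ times the numerator'' is circular, and the deterministic bound on $|\Xi_k|$ is $\eta_N^{-1}=d_N/\eta$, not $O(1)$; the $O(1)$ bound is purely probabilistic and falls out immediately from the paper's factorisation plus $G_{kk}=O_{L^q}(1)$.

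There is one genuine error. In your final paragraph you try to cover slowly growing $d_N$ by claiming
\[
\sum_k \le C\,N\,d_N^{-\delta}\,\eta_N^{-(\delta-2)} = C\,N\,\eta^{-(\delta-2)}\,d_N^{-2}\to 0 \quad\text{``since }d_N\to\infty\text{''},
\]
but $N d_N^{-2}\to 0$ requires $d_N\gg N^{1/2}$, not merely $d_N\to\infty$. In fact the paper's own argument yields the same final bound $O(N d_N^{-\delta})$, which also needs polynomial growth $d_N=N^{\gamma}$ (then one takes $\delta>\max\{1/\gamma,\,2/(1-\gamma)\}$). Covering genuinely sub-polynomial $d_N$ would require exploiting the cancellation of the deterministic leading part of $\Xi_k$ under $(\mathbb{E}_k-\mathbb{E}_{k-1})$, as the paper's remark after the proof hints for the macroscopic case $d_N=1$; neither you nor the paper actually carry this out, and it is not needed for the main theorems, which assume $d_N=N^\gamma$.
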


\begin{proof}
By the triangle inequality it suffices to verify the claim when $M=1$, $c_{1}=1$. By definition of $\mathbb{E}_{k}$ we have $(\mathbb{E}_{k}-\mathbb{E}_{k-1})d_{N}^{-1}\mathrm{Tr}G(z) = (\mathbb{E}_{k}-\mathbb{E}_{k-1})Z_{k,N}$ where $Z_{k,N} := d_{N}^{-1}(\mathrm{Tr}G(z)-\mathrm{Tr}G_{k}(z))$. Then Schur's complement formula implies
\begin{align}
Z_{k,N} &= \frac{1}{d_{N}}\frac{1+h_{k}^{\dagger}G_{k}(z_{1})^{2}h_{k}}{\mathcal{H}_{kk}-z_{1}-h_{k}^{\dagger}G_{k}(z_{1})h_{k}}\\
&= \frac{1}{d_{N}}(1+\delta^{k,2}_{N}(z_{1})+N^{-1}\tr(G_{k}(z_{1})^{2})G_{kk}(z_{1})
\end{align}
where we made use of the identity for the diagonal elements of the resolvent
\begin{equation}
G_{kk}(z_{1}) = \frac{1}{\mathcal{H}_{kk}-z_{1}-h_{k}^{\dagger}G_{k}(z_{1})h_{k}}
\end{equation}
and defined
\begin{equation}
\delta^{k,n}_{N}(z_{1}) := h_{k}^{\dagger}G_{k}(z_{1})^{n}h_{k}-N^{-1}\tr(G_{k}(z_{1})^{n}) \label{deltakn}
\end{equation}
By the conditional Jensen inequality, we have $|\mathbb{E}_{k}Z_{k}|^{\delta} \leq \mathbb{E}_{k}|Z_{k}|^{\delta}$. Hence it is sufficient to prove 
\begin{equation}
\sum_{k=1}^{N}\mathbb{E}|Z_{k,N}|^{\delta} \to 0, \qquad N \to \infty \label{zkn}
\end{equation}
The limit \eqref{zkn} follows from standard concentration inequalities applied to the variables $d_{N}^{-1}\delta^{k,2}_{N}(z)$, $G_{kk}(z)$ and $d_{N}^{-1}N^{-1}\tr G(z)^{2}$. In particular, Lemmas \ref{d1bound}, \ref{lem:derivbound} and \ref{le:offdiag} show that for any fixed $q>0$, we have the estimates
\begin{align}
&d_{N}^{-1}\delta^{k,2}_{N}(z) = O_{L^{q}}((d_{N}/N)^{q/2}),\\
&G_{kk}(z) = O_{L^{q}}(1),\\
&d_{N}^{-1} N^{-1} \tr G(z)^{2} \leq O_{L^{q}}\left(\max\{d_{N}^{-q},(d_{N}/N)^{q}\}\right), \label{globalprob}
\end{align}
Then applying Cauchy-Schwarz and choosing $\delta>0$ large enough, we obtain \eqref{zkn}.
\end{proof}

\begin{remark}
In the macroscopic regime $d_{N}=1$, one can argue similarly that $Z_{k,N} = (1+s'(z))(-z-s(z))^{-1} + O(N^{-1/2})$ with high probability. The leading term in this asymptotic is deterministic and does not contribute to $(\mathbb{E}_{k}-\mathbb{E}_{k-1})Z_{k,N}$, while the error term is small enough to imply \eqref{zkn}.
\end{remark}

We now proceed to the remaining and most challenging part of the proof of Proposition \ref{prop:findim}, which is to verify condition \eqref{covarconv}. Before we proceed, it's worth noting that both $X_{k}^{2}$ and $|X_{k}|^{2}$ are finite linear combinations of terms of the form
\begin{equation}
(\mathbb{E}_{k}-\mathbb{E}_{k-1})\frac{1}{d_{N}}\mathrm{Tr}G(z_{1})\times (\mathbb{E}_{k}-\mathbb{E}_{k-1})\frac{1}{d_{N}}\mathrm{Tr}G(z_{2})
\end{equation}
and so it suffices to prove the convergence for a single mixed term in the linear combination. Setting
\begin{equation}
Y_{k}(z) := (\mathbb{E}_{k}-\mathbb{E}_{k-1})d_{N}^{-1}\mathrm{Tr}G(z),
\end{equation}
our essential goal in the remainder of this section will be to prove that we have the convergence in probability
\begin{equation}
C_{N}(z_{1},\overline{z_{2}}) := \sum_{k=1}^{N}\mathbb{E}_{k-1}[Y_{k}(z_{1})Y_{k}(\overline{z_{2}})] \to \frac{1}{(i(\tau_1-\tau_2+i(\eta_1+\eta_2)))^{2}}, \qquad N \to \infty \label{convgamma}.
\end{equation}
In what follows, the proof of \eqref{convgamma} is divided into $3$ main subsections: in section \ref{se:simp} we rewrite $C_{N}(z_{1},z_{2})$ in a form suitable for the computation of asymptotics, then in section \ref{se:comp} the main asymptotic results are obtained and finally in section \ref{se:conv} they are used to prove \eqref{convgamma}.
\subsection{Simplifying the covariance kernel}
\label{se:simp}
Our first Proposition shows that $C_{N}(z_{1},z_{2})$ can be approximated in the following way

\begin{proposition}
In terms of the variables \eqref{deltakn}, define the covariance kernel
\begin{equation}
\tilde{C}_{N}(z_{1},z_{2}) := \frac{1}{d_{N}^{2}}\frac{\partial^{2}}{\partial z_{1} \partial z_{2}}\left[s(z_{1})s(z_{2})\sum_{k=1}^{N}\mathbb{E}_{k-1}[\mathbb{E}_{k}\delta^{k,1}_{N}(z_{1})\mathbb{E}_{k}\delta^{k,1}_{N}(z_{2})]\right] \label{apcoker}
\end{equation}
Then we have
\begin{equation}
C_{N}(z_{1},z_{2}) = \tilde{C}_{N}(z_{1},z_{2}) + O_{L^1}(\sqrt{d_{N}^{2}/N}) \label{covarest}
\end{equation}
\end{proposition}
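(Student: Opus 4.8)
The plan is to Taylor-expand the martingale increment $Y_k(z) := (\mathbb{E}_k - \mathbb{E}_{k-1})\,d_N^{-1}\tr G(z)$ in the natural fluctuation variables and substitute the result into $C_N(z_1,z_2) = \sum_{k=1}^N \mathbb{E}_{k-1}[Y_k(z_1)Y_k(z_2)]$. First, since $G_k$ does not involve the $k$-th row or column, $(\mathbb{E}_k - \mathbb{E}_{k-1})\tr G_k(z) = 0$, hence $Y_k(z) = (\mathbb{E}_k - \mathbb{E}_{k-1})\,d_N^{-1}(\tr G(z) - \tr G_k(z))$. Combining the Schur identity recorded above with $\partial_z G_k = G_k^2$ gives the compact formula $\tr G(z) - \tr G_k(z) = -\partial_z\log\beta_k(z)$, where $\beta_k(z) := \mathcal H_{kk} - z - h_k^\dagger G_k(z)h_k = G_{kk}(z)^{-1}$ and $\im\beta_k(z) < 0$, so the logarithm is analytic on $\mathbb H$. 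Writing $h_k^\dagger G_k h_k = N^{-1}\tr G_k + \delta^{k,1}_N$ and using the self-consistent equation $-z - s(z) = s(z)^{-1}$, I factor $\beta_k(z) = s(z)^{-1}\big(1 + s(z)\varepsilon_k(z)\big)$ with
\[
\varepsilon_k(z) := \mathcal H_{kk} - \big(N^{-1}\tr G_k(z) - s(z)\big) - \delta^{k,1}_N(z).
\]

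Next I would Taylor-expand: $\log\beta_k(z) = -\log s(z) + s(z)\varepsilon_k(z) + r_k(z)$ with $r_k(z) = O(|s(z)\varepsilon_k(z)|^2)$ on the event $\mathcal G_k$ that $|s(z)\varepsilon_k(z)| \le 1/2$. This event has overwhelming probability: the local law (Theorem \ref{thm:optimalbound}) controls $N^{-1}\tr G_k(z) - s(z)$, while Lemmas \ref{d1bound}, \ref{lem:derivbound} and \ref{le:offdiag} control $\delta^{k,1}_N(z)$ and $\mathcal H_{kk}$. On $\mathcal G_k^c$ I would bound $\log\beta_k$ and its $z$-derivative crudely via $|G_{kk}| = O_{L^q}(1)$, moment bounds on $|\beta_k|$, and moment bounds on $\tr G - \tr G_k$ for the derivative; as $\mathbb{P}(\mathcal G_k^c)$ is super-polynomially small, this contributes far below the target error.

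Now apply $(\mathbb{E}_k - \mathbb{E}_{k-1})$, which annihilates $-\log s(z)$, annihilates $N^{-1}\tr G_k(z) - s(z)$ (its $\mathbb{E}_k$- and $\mathbb{E}_{k-1}$-conditional expectations agree, as $G_k$ depends only on rows $\neq k$), and — the crucial point — replaces $\delta^{k,1}_N(z)$ by $\mathbb{E}_k\delta^{k,1}_N(z)$, because $\mathbb{E}_{k-1}\delta^{k,1}_N(z) = 0$: indeed $h_k$ is independent of $\mathcal H_k$ and of the upper-left $(k-1)$-corner, so $\mathbb{E}_{k-1}[h_k^\dagger G_k h_k] = \mathbb{E}_{k-1}[N^{-1}\tr G_k]$. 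Differentiating in $z$ yields
\[
Y_k(z) = \frac{1}{d_N}\,\partial_z\!\big(s(z)\,\mathbb{E}_k\delta^{k,1}_N(z)\big) - \frac{1}{d_N}\,s'(z)\,\mathcal H_{kk} + R_k(z), \qquad R_k(z) := -\frac{1}{d_N}\,\partial_z(\mathbb{E}_k - \mathbb{E}_{k-1})r_k(z).
\]
Substituting into $\sum_k\mathbb{E}_{k-1}[Y_k(z_1)Y_k(z_2)]$, the product of the first terms reproduces $\tilde C_N(z_1,z_2)$ once the deterministic factors $s(z_j)$ and the derivatives $\partial_{z_1},\partial_{z_2}$ are pulled outside the $k$-sum. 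The cross terms involving $\mathcal H_{kk}$ vanish, since $\mathcal H_{kk}$ is centered and independent both of $\mathbb{E}_k\delta^{k,1}_N$ and of $\mathcal F_{k-1}$, and the pure $\mathcal H_{kk}$ term contributes only the deterministic quantity $d_N^{-2}s'(z_1)s'(z_2)\sum_k\mathbb{E}[\mathcal H_{kk}^2]$, of lower order (and absent under the literal reading of Definition \ref{def:wig}, where the diagonal is degenerate).

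What remains — and what I expect to be the main obstacle — is to show $\sum_k\mathbb{E}_{k-1}[\,\text{terms with at least one }R_k\,] = O_{L^1}(\sqrt{d_N^2/N})$. By Cauchy--Schwarz it suffices to prove $\sum_k \|R_k(z_j)\|_{L^2}\big(\|R_k(z_\ell)\|_{L^2} + \|Y_k(z_\ell)\|_{L^2}\big) = O(\sqrt{d_N^2/N})$. The key is that $(\mathbb{E}_k - \mathbb{E}_{k-1})r_k$ is much smaller than $r_k$ — only the randomness generated at the $k$-th step survives the projection, just as $\mathbb{E}_k\delta^{k,1}_N$ is much smaller than $\delta^{k,1}_N$ — so one expands $\varepsilon_k^2$ into $\mathcal H_{kk}^2$, $(\delta^{k,1}_N)^2$, $(N^{-1}\tr G_k - s)^2$ and their three cross products and estimates each after applying $(\mathbb{E}_k - \mathbb{E}_{k-1})$. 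The genuine nuisance is the derivative $\partial_z$ inside $R_k$: it promotes a resolvent to its square and thus costs a factor $\im(z)^{-1} = d_N/\eta$, which must be carried through every estimate — for $\delta^{k,1}_N$ this is exactly the derivative bound of Lemma \ref{lem:derivbound}, and for the deterministic pieces it follows from Cauchy's estimate on a circle of radius $\asymp\im(z)$. Assembling these size bounds and invoking the standing hypothesis $d_N \ll N^{1/3}$, the accumulated error over the $N$ martingale steps is $O_{L^1}(\sqrt{d_N^2/N})$, as claimed.
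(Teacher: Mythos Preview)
Your route via $\tr G-\tr G_k=-\partial_z\log\beta_k$ and a Taylor expansion of $\log(1+s\varepsilon_k)$ lands on exactly the same leading term $\tilde Y_k$ as the paper, which instead uses an \emph{exact} algebraic identity (Lemma~\ref{le:schurident}) to isolate $\partial_z\big[(\mathcal H_{kk}-\delta^{k,1}_N)/(z+N^{-1}\tr G_k)\big]$ and only afterwards replaces the denominator by $-s(z)^{-1}$. The two decompositions are equivalent at leading order; the paper's identity has the advantage of an explicit polynomial error $\epsilon_{k,N}$ that avoids any good-event truncation of a logarithm.

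Where your argument has a genuine gap is the Cauchy--Schwarz step. You propose to bound the cross terms by the \emph{termwise} sum $\sum_k\|R_k\|_{L^2}\|Y_k\|_{L^2}$. With the size estimates implied by your own expansion one gets $\|R_k\|_{L^2}=O(d_N/N)$ (since $r_k=O(\varepsilon_k^2)$ and $\|\varepsilon_k\|_{L^4}=O((d_N/N)^{1/2})$, and the $\partial_z$ costs a harmless $O(d_N)$ which is cancelled by the prefactor $d_N^{-1}$), while the best uniform bound on the leading piece is $\|\tilde Y_k\|_{L^2}=O((d_N/N)^{1/2})$. Summing over $k$ gives $N\cdot(d_N/N)\cdot(d_N/N)^{1/2}=d_N^{3/2}N^{-1/2}$, which is too large by a factor $d_N^{1/2}$ compared with the stated $O_{L^1}(\sqrt{d_N^2/N})$. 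Your remark that the projection $(\mathbb E_k-\mathbb E_{k-1})$ makes $r_k$ ``much smaller'' does not supply this missing factor: for the dominant contribution $(\delta^{k,1}_N)^2$ to $r_k$, the martingale-difference projection does not shrink the $L^2$ norm below $O(d_N/N)$.

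The fix is exactly what the paper does: apply Cauchy--Schwarz in the \emph{block} form
\[
\Big(\sum_{k}\mathbb E|R_k|^2\Big)^{1/2}\Big(\sum_{k}\mathbb E|\tilde Y_k|^2\Big)^{1/2},
\]
and observe that the second factor equals $\big(\mathbb E\,\tilde C_N(z,\overline z)+d_N^{-2}|s'(z)|^2\big)^{1/2}=O(1)$, using the $L^1$-boundedness of $\tilde C_N$ established later in Proposition~\ref{prop:cn}. This forward reference is what buys the missing $d_N^{-1/2}$; without it your scheme only yields $O_{L^1}(\sqrt{d_N^3/N})$ (still $o(1)$ under $d_N\ll N^{1/3}$, but not the bound claimed in the Proposition).
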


\begin{proof}
As in the proof of the Lindeberg condition, we start with Schur's complement formula which implies that
\begin{equation}
Y_{k}(z) = (\mathbb{E}_{k}-\mathbb{E}_{k-1})\frac{1}{d_{N}}\frac{1+h_{k}^{\dagger}G_{k}(z)^{2}h_{k}}{\mathcal{H}_{kk}-z-h_{k}^{\dagger}G_{k}(z)h_{k}}
\end{equation}
Rewriting $Y_{k}(z)$ via the small terms \eqref{deltakn} and expanding, we obtain the exact identity
\begin{equation}
\label{errorsgkmt}
Y_{k}(z) = (\mathbb{E}_{k}-\mathbb{E}_{k-1})\frac{\partial}{\partial z}\frac{1}{d_{N}}\frac{\mathcal{H}_{kk}-\delta^{k,1}_{N}(z)}{z+N^{-1}\tr(G_{k}(z))} + \epsilon_{k,N}(z)
\end{equation}
where
\begin{equation}
\epsilon_{k,N}(z) := (\mathbb{E}_{k}-\mathbb{E}_{k-1})\left(\frac{1}{d_{N}}\frac{(\mathcal{H}_{kk}-\delta^{k,1}_{N}(z))^{2}G_{kk}(z)}{(z+N^{-1}\tr(G_{k}(z)))^{2}}-\frac{1}{d_{N}}\frac{\delta^{k,2}_{N}(\mathcal{H}_{kk}-\delta^{k,1}_{N}(z))}{(z+N^{-1}\tr(G_{k}(z)))}\right).
\end{equation}
This identity is implicit in the work \cite{BY05} (see Section 4.1 in \cite{BY05}), but we provide the derivation in the Appendix, Lemma \ref{le:schurident}. Then as in the proof of \eqref{zkn}, we see that $\epsilon_{k,N}(z) = O_{L^1}(d_{N}/N)$ uniformly in $k$. Similarly, by Lemma \ref{d1bound} we can replace $(z+N^{-1}\tr (G_{k}(z)))^{-1}$ with $-s(z)$, costing an error of the same order. Therefore, we have $Y_{k}(z) = \tilde{Y}_{k}(z) + O_{L^1}(d_{N}/N)$ where
\begin{equation}
\tilde{Y}_{k}(z) = -(\mathbb{E}_{k}-\mathbb{E}_{k-1})\frac{\partial}{\partial z}\frac{1}{d_{N}}s(z)(\mathcal{H}_{kk}-\delta^{k,1}_{N}(z))
\end{equation}
Using properties of the conditional expectation, we compute that
\begin{equation}
	\sum_{k=1}^{N}\mathbb{E}_{k-1}[\tilde{Y}_{k}(z_{1})\tilde{Y}_{k}(z_{2})] = \frac{1}{d_{N}^{2}}\frac{\partial^2}{\partial z_1 \partial z_2}s(z_1)s(z_2)+\tilde{C}_{N}(z_{1},z_{2})
\end{equation}
Then the covariance $C_{N}(z_{1},z_{2})$ can be estimated as
\begin{equation}
\begin{split}
C_{N}(z_{1},z_{2}) &= \tilde{C}_{N}(z_{1},z_{2}) + \sum_{k=1}^{N}\mathbb{E}_{k-1}(Y_{k}(z_{1})-\tilde{Y}_{k}(z_{1}))\tilde{Y}_{k}(z_{2})+\sum_{k=1}^{N}\mathbb{E}_{k-1}(Y_{k}(z_{2})-\tilde{Y}_{k}(z_{2}))\tilde{Y}_{k}(z_{1})\\
&+\sum_{k=1}^{N}\mathbb{E}_{k-1}(Y_{k}(z_1)-\tilde{Y}_{k}(z_1))(Y_{k}(z_2)-\tilde{Y}_{k}(z_2)) + O_{L^1}(1/d_N^2)
\end{split}
\end{equation}
where we used that $s'(z_1)s'(z_2)$ are uniformly bounded for any fixed $E \in (-2,2)$. By our estimates for $Y_{k}-\tilde{Y}_{k}$, the last term above is $O_{L^1}(d_{N}^{2}/N)$. For the middle terms, we apply Cauchy-Schwarz (twice) to obtain
\begin{equation}
\begin{split}
&\mathbb{E}\bigg{|}\sum_{k=1}^{N}\mathbb{E}_{k-1}\tilde{Y}_{k}(z_{1})(Y_{k}(z_{2})-\tilde{Y}_{k}(z_{2}))\bigg{|} \leq \mathbb{E}\sum_{k=1}^{N}|Y_{k}(z_{1})||Y_{k}(z_{2})-\tilde{Y}_{k}(z_{2})|\\
&\leq \sqrt{\sum_{k=1}^{N}\mathbb{E}|Y_{k}(z_1)-\tilde{Y}_{k}(z_1)|^{2}\sum_{k=1}^{N}\mathbb{E}|\tilde{Y}_{k}(z_2)|^{2}}\\
&=\sqrt{\sum_{k=1}^{N}\mathbb{E}|Y_{k}(z_1)-\tilde{Y}_{k}(z_1)|^{2}}\sqrt{\mathbb{E}\tilde{C}_{N}(z_2,\overline{z_2})+d_{N}^{-2}|s'(z_2)|^{2}}
\end{split}
\end{equation}
The first term in the product above is $O(\sqrt{d_{N}^{2}/N})$. In the remainder of this section, it will become clear that $\tilde{C}_{N}(z_1,z_2)$ is bounded in $L^1$, see Proposition \ref{prop:cn}. 
\end{proof}

\begin{remark}
\label{rem:cauchy}
Before we proceed further, note that the approximate covariance kernel $\tilde{C}_{N}(z_{1},z_{2})$ in \eqref{apcoker} is naturally expressed in terms of the auxiliary kernel
\begin{equation}
K_{N}(z_{1},z_{2}) := \sum_{k=1}^{N}\mathbb{E}_{k-1}[\mathbb{E}_{k}\delta^{k,1}_{N}(z_{1})\mathbb{E}_{k}\delta^{k,1}_{N}(z_{2})].
\end{equation}
Then by Cauchy's integral formula and analyticity, we can write the derivatives in \eqref{apcoker} as 
\begin{equation}
\tilde{C}_{N}(z_{1},z_{2}) := \frac{1}{d_{N}^{2}}\frac{1}{(2\pi i)^{2}}\oint_{\mathcal{S}_{z_{1}}}d\omega_1 \oint_{\mathcal{S}_{z_{2}}}d\omega_2 \frac{s(\omega_1)s(\omega_2)}{(z_{1}-\omega_1)^{2}(z_{2}-\omega_2)^{2}}\,K_{N}(\omega_1,\omega_2) \label{cauchyker}
\end{equation}
where $\mathcal{S}_{z}$ is a small circle with center $z$ and radius $1/(2d_{N})\sqrt{(\tau_1-\tau_2)^{2}+(\eta_1-\eta_2)^{2}}$. This ensures that for fixed $\tau_1$,$\tau_2$,$\eta_1$,$\eta_2$ and $N$ large enough, $\mathcal{S}_{z_{1}}$ and $\mathcal{S}_{z_{2}}$ are disjoint sets. In the degenerate case that $z_{1}=z_{2}$, it is enough to use the Cauchy integral formula with a single circle $\mathcal{S}_{z_{1}}$.

If we obtain uniform estimates on $K_{N}$ of the form $K_{N} = \tilde{K}_{N}+O_{L^1}(u(N))$, then the error in approximating $\tilde{C}_{N}(z_{1},z_{2})$ is of the same order in $N$:
\begin{equation}
\begin{split}
&\mathbb{E}\bigg{|}\frac{1}{d_{N}^{2}}\frac{1}{(2\pi i)^{2}}\oint_{\mathcal{S}_{z_{1}}}d\omega_1 \oint_{\mathcal{S}_{z_{2}}}d\omega_2 \frac{s(\omega_1)s(\omega_2)}{(z_{1}-\omega_1)^{2}(z_{2}-\omega_2)^{2}}\,|K_{N}-\tilde{K}_{N}|\bigg{|}\\
&\leq \frac{1}{d_{N}^{2}}\frac{1}{4\pi^{2}}\oint_{\mathcal{S}_{z_{1}}}d\omega_1 \oint_{\mathcal{S}_{z_{2}}}d\omega_2 \frac{|s(\omega_1)||s(\omega_2)|}{|z_{1}-\omega_1|^{2}|z_{2}-\omega_2|^{2}}\,|u(N)|\\
&\leq \frac{1}{16 \pi^{2}}\frac{1}{(\tau_1-\tau_2)^{2}+(\eta_1-\eta_2)^{2}}\, |u(N)|
\end{split}
\end{equation}
The conclusion of this remark is that it will be sufficient just to understand the convergence in $L^1$ of the kernel $K_{N}(z_{1},z_{2})$. 
\end{remark}

Our first Lemma in this direction rewrites $K_{N}(z_{1},z_{2})$ in terms of the matrix elements of the resolvent $G_{k}(z) := (\mathcal{H}_{k}-z)^{-1}$. We will frequently make use of the shorthand notation $G^{(p)}_{k}:= G_{k}(z_{p})$, $p=1,2$ to emphasize the dependence on the variables $z_{1}$ and $z_{2}$.
\begin{lemma}
The covariance kernel $K_{N}(z_{1},z_{2})$ satisfies the exact identity
\begin{align}
K_{N}(z_{1},z_{2}) =  &N^{-2}\sum_{k=1}^{N}\mathbb{E}_{k-1}\sum_{i<k,j<k}\mathbb{E}_{k}(G^{(1)}_{k})_{ij}\mathbb{E}_{k}(G^{(2)}_{k})_{ji} \label{s1k}\\
&+N^{-2}\sum_{k=1}^{N}\mathbb{E}_{k-1}\sum_{i<k}\mathbb{E}_{k}(G^{(1)}_{k})_{ii}\mathbb{E}_{k}(G^{(2)}_{k})_{ii}\beta_{ik} \label{s2}
\end{align}
where $\beta_{ik}$ is expressed in terms of the fourth moments
\begin{equation}
\beta_{ik} := \mathbb{E}(|W_{ik}|^{2}-1)^{2}
\end{equation}
\end{lemma}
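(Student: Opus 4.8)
The plan is to expand $\delta^{k,1}_N(z_p) = h_k^{\dagger} G^{(p)}_k h_k - N^{-1}\tr G^{(p)}_k$ into its matrix components and carry out the conditional expectations term by term. First I would write $h_k^{\dagger} G^{(p)}_k h_k = \sum_{i,j} \overline{(h_k)_i}\,(G^{(p)}_k)_{ij}\,(h_k)_j$, where the indices $i,j$ range over the $N-1$ coordinates not equal to $k$ and $(h_k)_i = \mathcal{H}_{ik}$. The key structural point is that $G^{(p)}_k$ depends only on $\mathcal{H}_k$, hence is independent of the entries $\{\mathcal{H}_{ik}\}_{i \ne k}$ of the erased column; moreover $\mathbb{E}_k G^{(p)}_k = G^{(p)}_k$ since $\mathcal{H}_k$ is measurable with respect to the first $k$ rows/columns once $i,j < k$, but more carefully one uses that $\mathbb{E}_k$ fixes $G_k$ entries indexed by $i,j < k$ and averages out the column entries $\mathcal{H}_{ik}$ with $i \geq k$. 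Applying $\mathbb{E}_k$ to $\delta^{k,1}_N(z_p)$: the diagonal terms $|\mathcal{H}_{ik}|^2 (G^{(p)}_k)_{ii}$ have conditional expectation $N^{-1}(G^{(p)}_k)_{ii}$ when $i \geq k$ (using $\mathbb{E}|\mathcal{H}_{ik}|^2 = N^{-1}$) and stay as $|\mathcal{H}_{ik}|^2(G^{(p)}_k)_{ii}$ when $i < k$; the off-diagonal terms $\overline{\mathcal{H}_{ik}}\mathcal{H}_{jk}(G^{(p)}_k)_{ij}$ with $i \ne j$ vanish under $\mathbb{E}_k$ unless both $i,j < k$ (using $\mathbb{E}\mathcal{H}_{ik} = 0$ and $\mathbb{E}\mathcal{H}_{ik}^2 = 0$ to kill the $i \geq k$ or $j \geq k$ cases). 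Subtracting $N^{-1}\tr G^{(p)}_k = N^{-1}\sum_{i \ne k}(G^{(p)}_k)_{ii}$ exactly cancels the $i \geq k$ diagonal contributions, leaving
\[
\mathbb{E}_k \delta^{k,1}_N(z_p) = \sum_{i<k}\left(|\mathcal{H}_{ik}|^2 - N^{-1}\right)(G^{(p)}_k)_{ii} + \sum_{i<k}\sum_{\substack{j<k\\ j \ne i}} \overline{\mathcal{H}_{ik}}\,\mathcal{H}_{jk}\,(G^{(p)}_k)_{ij}.
\]

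Next I would form the product $\mathbb{E}_k\delta^{k,1}_N(z_1)\cdot\mathbb{E}_k\delta^{k,1}_N(z_2)$ and apply $\mathbb{E}_{k-1}$. Crucially, $\mathbb{E}_{k-1}$ averages over the $k$-th row and column entries $\{\mathcal{H}_{ik}\}_{i<k}$ (which are \emph{not} in the first $k-1$ coordinates' $\sigma$-algebra, since row/column $k$ is being integrated), while $G^{(p)}_k$ with indices $i,j<k$ is $\mathbb{E}_{k-1}$-measurable. So the computation reduces to a Gaussian-type (Wick) pairing over the i.i.d.\ entries $\mathcal{H}_{ik}$, $i<k$. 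Expanding the product gives four pieces: (diagonal)$\times$(diagonal), (diagonal)$\times$(off-diagonal), (off-diagonal)$\times$(diagonal), and (off-diagonal)$\times$(off-diagonal). The mixed diagonal/off-diagonal pieces vanish because each off-diagonal monomial $\overline{\mathcal{H}_{ik}}\mathcal{H}_{jk}$ with $i\ne j$ has odd behavior under the entry distribution and cannot pair with a single $|\mathcal{H}_{\ell k}|^2$ factor to produce a nonzero expectation (using $\mathbb{E}\mathcal{H}_{ik}=\mathbb{E}\mathcal{H}_{ik}^2=\mathbb{E}\mathcal{H}_{ik}|\mathcal{H}_{ik}|^2 = 0$, the last from $\mathbb{E}W_{ik}^2 = 0$ for complex entries). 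The diagonal$\times$diagonal piece: $\mathbb{E}_{k-1}\sum_{i,i'<k}(|\mathcal{H}_{ik}|^2-N^{-1})(|\mathcal{H}_{i'k}|^2-N^{-1})(G^{(1)}_k)_{ii}(G^{(2)}_k)_{i'i'}$ survives only for $i=i'$ (independence kills $i\ne i'$), yielding $\sum_{i<k}\mathbb{E}(|\mathcal{H}_{ik}|^2-N^{-1})^2(G^{(1)}_k)_{ii}(G^{(2)}_k)_{ii} = N^{-2}\sum_{i<k}\beta_{ik}(G^{(1)}_k)_{ii}(G^{(2)}_k)_{ii}$, where $\beta_{ik} = N^2\,\mathbb{E}(|\mathcal{H}_{ik}|^2 - N^{-1})^2 = \mathbb{E}(|W_{ik}|^2-1)^2$ by the scaling $\mathcal{H} = N^{-1/2}W$; this is exactly line \eqref{s2}. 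The off-diagonal$\times$off-diagonal piece: $\mathbb{E}_{k-1}\sum_{i\ne j<k}\sum_{i'\ne j'<k}\overline{\mathcal{H}_{ik}}\mathcal{H}_{jk}\mathcal{H}_{i'k}\overline{\mathcal{H}_{j'k}}(G^{(1)}_k)_{ij}(G^{(2)}_k)_{i'j'}$; the only nonvanishing Wick pairing (given $\mathbb{E}\mathcal{H}_{ik}^2=0$, so $\mathcal{H}$ must pair with $\overline{\mathcal{H}}$) is $i=j'$, $j=i'$, contributing $\sum_{i\ne j<k}\mathbb{E}|\mathcal{H}_{ik}|^2\mathbb{E}|\mathcal{H}_{jk}|^2(G^{(1)}_k)_{ij}(G^{(2)}_k)_{ji} = N^{-2}\sum_{i\ne j<k}(G^{(1)}_k)_{ij}(G^{(2)}_k)_{ji}$, and completing the diagonal $i=j$ into this sum just contributes another $N^{-2}\sum_{i<k}(G^{(1)}_k)_{ii}(G^{(2)}_k)_{ii}$, which can be absorbed; comparing with \eqref{s1k} one sees the full off-diagonal-squared contribution gives $N^{-2}\sum_i\sum_{j}(G^{(1)}_k)_{ij}(G^{(2)}_k)_{ji}$ over all $i,j<k$, matching \eqref{s1k} exactly (the $i=j$ overlap is accounted for consistently with the $\beta_{ik}$ versus plain-moment bookkeeping in \eqref{s2}).

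Finally I would reinstate the outer $\mathbb{E}_k$ that appears in the statement: since each $(G^{(p)}_k)_{ij}$ with $i,j<k$ is already $\mathcal{H}_k$-measurable and in particular $\mathbb{E}_k$-measurable, we have $\mathbb{E}_k(G^{(p)}_k)_{ij} = (G^{(p)}_k)_{ij}$ for $i,j<k$, so the identity as displayed in \eqref{s1k}--\eqref{s2} is the same as what the direct computation produces. The main obstacle is purely bookkeeping: keeping straight which index ranges ($i<k$ versus $i\geq k$) survive each conditional expectation, and verifying that the diagonal terms are correctly distributed between the $\beta_{ik}$-weighted sum \eqref{s2} and the unrestricted double sum \eqref{s1k} so that the $i=j$ contributions are neither double-counted nor dropped. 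No analytic estimates are needed here — the lemma is an exact algebraic identity following from the moment assumptions $\mathbb{E}W_{ij}=0$, $\mathbb{E}W_{ij}^2=0$, $\mathbb{E}|W_{ij}|^2=1$ together with independence of the entries; the only subtlety is the careful use of $\mathbb{E}W_{ij}^2=0$ (complex Wigner condition) to eliminate the anomalous pairings.
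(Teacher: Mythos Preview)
Your overall approach matches the paper's: expand $\mathbb{E}_k\delta^{k,1}_N$, multiply the two copies, and compute the pairings under $\mathbb{E}_{k-1}$. However, there is a genuine conceptual error. You assert that $(G^{(p)}_k)_{ij}$ for $i,j<k$ is $\mathbb{E}_k$-measurable (and later $\mathbb{E}_{k-1}$-measurable), and use this to drop and then ``reinstate'' the inner $\mathbb{E}_k$ appearing in the lemma. This is false: $G_k=(\mathcal{H}_k-z)^{-1}$ depends on \emph{all} entries of $\mathcal{H}_k$, including those with row or column index $>k$, so $(G_k)_{ij}$ is not measurable with respect to either $\sigma$-algebra even when $i,j<k$. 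The fix is simply to keep $\mathbb{E}_k$ on the resolvent entries throughout, as the lemma's right-hand side does. For the first step the correct input is not measurability but \emph{independence} of $G_k$ from the $k$-th column $\{W_{ik}\}_{i\ne k}$; this yields
\[
\mathbb{E}_k\delta^{k,1}_N(z_p) = N^{-1}\!\!\sum_{\substack{i,j<k\\ i\ne j}}\overline{W_{ik}}W_{jk}\,\mathbb{E}_k(G^{(p)}_k)_{ij} + N^{-1}\sum_{i<k}(|W_{ik}|^2-1)\,\mathbb{E}_k(G^{(p)}_k)_{ii}.
\]
For the $\mathbb{E}_{k-1}$ step, what you actually need is that $\mathbb{E}_k(G^{(p)}_k)_{ij}$ is $\mathbb{E}_{k-1}$-measurable (true: since $G_k$ is independent of the entries $W_{\cdot k}$ that distinguish the two $\sigma$-algebras, one has $\mathbb{E}_k(G_k)_{ij}=\mathbb{E}_{k-1}(G_k)_{ij}$) and independent of $\{W_{ik}\}_{i<k}$. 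With this correction your pairing computation goes through verbatim, now with $\mathbb{E}_k(G_k)_{ij}$ in place of $(G_k)_{ij}$, and lands directly on the displayed identity without any need to ``reinstate'' $\mathbb{E}_k$ afterwards.

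A smaller point: your remark about ``absorbing'' the $i=j$ diagonal into the double sum \eqref{s1k} is hand-wavy. The off-diagonal-squared contribution produces $\sum_{i\ne j<k}$, whereas \eqref{s1k} as displayed includes $i=j$; these differ by $N^{-2}\sum_{i<k}\mathbb{E}_k(G^{(1)}_k)_{ii}\mathbb{E}_k(G^{(2)}_k)_{ii}$. You should state explicitly how this extra diagonal is accounted for rather than leaving it to ``consistent bookkeeping''.
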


\begin{proof}
By definition we have
\begin{align}
\mathbb{E}_{k}(\delta^{k,1}_{N}(z_{1})) &=N^{-1}\sum_{i<k,j<k}\mathbb{E}_{k}(G^{(1)}_{k})_{ij}\overline{W_{ik}}W_{jk}-N^{-1}\sum_{j<k}\mathbb{E}_{k}(G^{(1)}_{k})_{jj}\\
&= N^{-1}\sum_{\substack{i<k,j<k\\i \neq j}}\mathbb{E}_{k}(G^{(1)}_{k})_{ij}\overline{W_{ik}}W_{jk}+N^{-1}\sum_{j<k}\mathbb{E}_{k-1}(G^{(1)}_{k})_{jj}(|W_{jk}|^{2}-1)
\end{align}
Multiplying out the resulting terms, we obtain
\begin{align}
&\mathbb{E}_{k-1}[\mathbb{E}_{k}(\delta^{1,k}_{N}(z))\mathbb{E}_{k}(\delta^{1,k}_{N}(z))]\\
&=N^{-2}\mathbb{E}_{k-1}\sum_{\substack{i<k,j<k\\p<k,q<k\\i \neq j, p \neq q}}\mathbb{E}_{k}(G^{(1)}_{k})_{ij}\mathbb{E}_{k}(G^{(2)}_{k})_{pq}\overline{W_{ik}}W_{jk}\overline{W_{pk}}W_{qk} \label{s1term}\\
&+N^{-2}\mathbb{E}_{k-1}\sum_{\substack{i<k,j<k\\i \neq j}}\mathbb{E}_{k}(G^{(1)}_{k})_{ij}\overline{W_{ik}}W_{jk}\sum_{j<k}\mathbb{E}_{k}(G^{(2)}_{k})_{jj}(|W_{jk}|^{2}-1) \label{midterm1}\\
&+N^{-2}\mathbb{E}_{k-1}\sum_{\substack{i<k, j<k\\i \neq j}}\mathbb{E}_{k}(G^{(2)}_{k})_{ij}\overline{W_{ik}}W_{jk}\sum_{j<k}\mathbb{E}_{k}(G^{(1)}_{k})_{jj}(|W_{jk}|^{2}-1) \label{midterm2}\\
&+N^{-2}\mathbb{E}_{k-1}\sum_{i<k,j<k}\mathbb{E}_{k}(G^{(1)}_{k})_{ii}\mathbb{E}_{k}(G^{(2)}_{k})_{jj}(|W_{jk}|^{2}-1)(|W_{ik}|^{2}-1) \label{s2term}
\end{align}
It is clear that the sums \eqref{midterm1} and \eqref{midterm2} are identically zero, since the vector $\{W_{ik}\}_{i\neq k}$ consists of centered independent random variables satisfying $\mathbb{E}|W_{ik}|^{2}=1$. Similarly, the first summation \eqref{s1term} will be zero unless $i=q$ and $p=j$, this gives the first sum on the right-hand side of \eqref{s1k}. The last term \eqref{s2term} will be zero unless $i=j$, which gives the second sum in \eqref{s2}.
\end{proof}

We now proceed with the estimation of the two sums in \eqref{s1k} and \eqref{s2}. To do this we need precise estimates on the resolvent matrix elements appearing in the sums.
\begin{lemma}[Bound on the resolvent]
\label{le:offdiag}
Let $z = E+\frac{\tau+i\eta}{d_{N}}$ as in \eqref{zn} and consider an off-diagonal resolvent matrix element $(G_{k}(z))_{pq}$ with $p \neq q$. Then for any positive integer $s$ we have positive constants $c,C$ such that
\begin{equation}
\mathbb{E}|(G_{k}(z))_{pq}|^{s} \leq (Cs)^{cs}\left(\frac{d_{N}}{\eta N}\right)^{s/2} \label{gpq}
\end{equation}
for all $k,p,q$, $E, \tau \in \mathbb{R}$, $\eta>0$ and $d_{N}>0$. For the diagonal matrix elements, the same result holds but with $(G_{k}(z))_{pp} - s(z)$ in place of $(G_{k}(z))_{pq}$.
\end{lemma}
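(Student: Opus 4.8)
The plan is to prove the resolvent bound \eqref{gpq} by combining the known local semicircle law of Theorem \ref{thm:optimalbound} with a standard large-deviations (fluctuation averaging) argument. First I would recall that the matrix $\mathcal{H}_{k}$ is itself a Wigner matrix of size $N-1$, normalized by $N^{-1/2}$ rather than $(N-1)^{-1/2}$; the discrepancy is harmless and only produces a shift of size $O(1/N)$ in the Stieltjes transform, which is absorbed into the constants. Applying Theorem \ref{thm:optimalbound} to $\mathcal{H}_{k}$ at the point $z = E + (\tau+i\eta)/d_N$ (equivalently $d_{N-1}\approx d_N$) gives that the normalized trace $N^{-1}\tr G_k(z) = s_N(z)$ concentrates around $s(z)$ with fluctuations of order $d_N/(N\eta)$ and sub-Gaussian-type tails controlled by $(Cq)^{cq^2}K^{-q}$.

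Next I would pass from the trace to individual matrix entries. The standard route is to go through the identity $(G_k)_{pq} = -(G_k)_{pp}(G_k^{(pq)})_{qq}\, h_p^\dagger G_k^{(pq)} h_q'$-type Schur expansions, but a cleaner path, given the tools already cited, is to invoke the entrywise local law: on the event where $s_N(z)$ is close to $s(z)$, one has $(G_k)_{pp} = s(z) + O(\Psi)$ and $(G_k)_{pq} = O(\Psi)$ for $p\neq q$, where $\Psi = \sqrt{\im s(z)/(N\eta_{\mathrm{eff}})} + 1/(N\eta_{\mathrm{eff}})$ and $\eta_{\mathrm{eff}} = \eta/d_N$ is the true imaginary part of the spectral parameter. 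Since $E$ is fixed strictly inside $(-2,2)$, $\im s(z)$ is bounded above and below, so $\Psi \asymp \sqrt{d_N/(N\eta)}$, which is exactly the rate claimed. The moment bound \eqref{gpq} then follows by integrating the tail: write $\mathbb{E}|(G_k)_{pq}|^s$ as a sum of the contribution from the good event (where $|(G_k)_{pq}| \lesssim K\sqrt{d_N/(N\eta)}$) and the bad event, and use the deterministic bound $|(G_k)_{pq}| \leq \|G_k\| \leq d_N/\eta$ together with the tail estimate $(Cq)^{cq^2}K^{-q}$ to kill the bad event by choosing $q$ a sufficiently large multiple of $s$; this is where the factor $(Cs)^{cs}$ in the statement originates.

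The one genuinely delicate point is making the passage from the \emph{averaged} quantity $s_N(z)$ controlled in Theorem \ref{thm:optimalbound} to the \emph{entrywise} control, uniformly in $k,p,q$ and, crucially, uniformly down to arbitrarily small $\eta$ and for $E,\tau$ anywhere on the real line (the lemma is stated for all $\eta>0$ and $E,\tau\in\mathbb{R}$, not just $\eta$ of order one and $E$ in the bulk). For $\eta$ small or the spectral parameter far outside $[-2,2]$ one cannot simply quote the bulk local law; instead I would split into cases. When $\eta/d_N \geq \tilde\eta$ or $|E+\tau/d_N|$ is away from $[-2,2]$, the resolvent norm bound $\|G_k\|\le d_N/\eta$ combined with $\im (G_k)_{pp} = \sum_\ell |u_\ell(p)|^2 \eta_{\mathrm{eff}}/((\lambda_\ell - \re z)^2 + \eta_{\mathrm{eff}}^2)$ and Ward-type identities $\sum_q |(G_k)_{pq}|^2 = (\im G_k)_{pp}/\eta_{\mathrm{eff}}$ already give the bound up to constants. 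The main work is the intermediate regime, where I would lean on the uniform variance-type estimates of \cite{SW13} (quoted later as Proposition \ref{prop:sw}) and on the full strength of the entrywise local semicircle law of \cite{EYY12,EKTY13} applied to $\mathcal{H}_k$; the hypotheses there require only $N\eta_{\mathrm{eff}} \gg 1$, i.e. $N\eta/d_N \gg 1$, but for even smaller $\eta$ the Ward identity plus the trivial norm bound suffices since then $d_N/(N\eta) \gtrsim 1$ and \eqref{gpq} is automatic. Assembling these cases and tracking that all constants depend only on the sub-Gaussian constant $c$ of Definition \ref{def:wig} and on $\delta$ (through the distance of $E$ to $\pm 2$) completes the proof; the hardest part, as usual in this circle of ideas, is the bookkeeping to get the claimed \emph{uniformity} in all parameters simultaneously rather than any single estimate.
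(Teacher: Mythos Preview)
Your proposal would eventually arrive at the result, but it is far more elaborate than what the paper does and is somewhat circular in its logic. The paper's proof consists of a single sentence: it cites Lemma~5.3 of \cite{CMS14} (with pointers to the earlier local-law literature \cite{ESY08,ESY09,EYY12,EYY12rig}) and says nothing more. In other words, the entrywise bound \eqref{gpq} is treated as an already-established black box from the local semicircle law literature, not as something to be re-derived from Theorem~\ref{thm:optimalbound}.

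Your route starts from the \emph{averaged} law of Theorem~\ref{thm:optimalbound} and tries to upgrade it to an \emph{entrywise} statement. This is going in the wrong direction historically and technically: in essentially all proofs of local semicircle laws the entrywise control is obtained first (or simultaneously) and the averaged law is a consequence, not the other way around. You implicitly acknowledge this when you write that you would ``lean on \ldots the full strength of the entrywise local semicircle law of \cite{EYY12,EKTY13}'' in the intermediate regime --- but once you invoke that, you have the lemma directly and the rest of the argument is redundant. Your case analysis (trivial norm bound when $d_N/(N\eta)\gtrsim 1$, Ward identity, etc.) is correct and shows good instincts, but it is scaffolding around a citation that already does the job. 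If you want a self-contained argument, the honest thing is to reproduce the self-consistent bootstrap for the entries (Schur complement plus large-deviation bounds on quadratic forms) rather than to pretend the averaged law is the input.
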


\begin{proof}
This follows from Lemma 5.3 in \cite{CMS14}, see also previous works on the local semi-circle laws \cite{ESY08,ESY09,EYY12,EYY12rig}.
\end{proof}

The second sum \eqref{s2} over diagonal elements of the resolvent can now be dispensed with immediately:
\begin{lemma}
Assume that
\begin{equation}
\sup_{N,i,k>0}\mathbb{E}(|W_{ik}|^{2}-1)^{2} < \infty
\end{equation}
Then for all points $z_{1}$ and $z_{2}$ with non-zero imaginary part and on all mesoscopic scales $1 \ll d_{N} \ll N$, we have the convergence in $L^{1}$:
\begin{equation}
\label{stat_4th_mom_est}
\frac{1}{d_{N}^{2}}\frac{\partial^{2}}{\partial z_{1} \partial z_{2}}\left[s(z_{1})s(z_{2})N^{-2}\sum_{k=1}^{N}\sum_{i<k}\mathbb{E}_{k}(G^{(1)}_{k})_{ii}\mathbb{E}_{k}(G^{(2)}_{k})_{ii}\beta_{ik}\right] \to 0, \qquad N \to \infty
\end{equation}
\end{lemma}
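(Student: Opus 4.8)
The plan is to show the expression in \eqref{stat_4th_mom_est} converges to zero in $L^1$ by bounding it crudely, using the Cauchy-integral representation from Remark \ref{rem:cauchy} together with the resolvent bounds of Lemma \ref{le:offdiag}. First I would strip away the two derivatives in $z_1,z_2$ exactly as in \eqref{cauchyker}: write the quantity as a double contour integral over circles $\mathcal{S}_{z_1},\mathcal{S}_{z_2}$ of radius comparable to $d_N^{-1}$, with kernel $\frac{s(\omega_1)s(\omega_2)}{(z_1-\omega_1)^2(z_2-\omega_2)^2}$ applied to $N^{-2}\sum_{k}\sum_{i<k}\mathbb{E}_k(G_k^{(1)})_{ii}\mathbb{E}_k(G_k^{(2)})_{ii}\beta_{ik}$. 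The contour integral contributes a harmless factor of order $((\tau_1-\tau_2)^2+(\eta_1-\eta_2)^2)^{-1}$ (when $z_1\neq z_2$; a single circle otherwise), so it suffices to bound the $L^1$ norm of that inner double sum, uniformly for $\omega_p$ on the circles $\mathcal{S}_{z_p}$ — on which $\im(\omega_p)$ is still of order $\eta_p/d_N$, so Lemma \ref{le:offdiag} applies with the same powers of $d_N/(\eta N)$.

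\textbf{Main estimate.} For the inner double sum, I would take absolute values inside and estimate
\[
\mathbb{E}\,\Bigl|N^{-2}\sum_{k=1}^N\sum_{i<k}\mathbb{E}_k(G_k^{(1)})_{ii}\,\mathbb{E}_k(G_k^{(2)})_{ii}\,\beta_{ik}\Bigr|
\le \frac{C}{N^2}\sum_{k=1}^N\sum_{i<k}\mathbb{E}\bigl[|\mathbb{E}_k(G_k^{(1)})_{ii}|\,|\mathbb{E}_k(G_k^{(2)})_{ii}|\bigr],
\]
using $\sup|\beta_{ik}|<\infty$ and the conditional-Jensen contraction $|\mathbb{E}_k X|\le \mathbb{E}_k|X|$, then Cauchy-Schwarz in the expectation. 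The diagonal entries $(G_k^{(p)})_{ii}$ are not small — they are close to $s(z_p)$, which is $O(1)$ — so the naive bound gives each summand $O(1)$ and the double sum $N^{-2}\cdot N^2/2 = O(1)$, which is \emph{not} enough. To gain the decay I would instead split $(G_k^{(p)})_{ii} = s(z_p) + \bigl[(G_k^{(p)})_{ii}-s(z_p)\bigr]$; the pure $s(z_1)s(z_2)$ piece contributes $s(z_1)s(z_2)N^{-2}\sum_k\sum_{i<k}\beta_{ik}$, which is genuinely $O(1)$ and does \emph{not} vanish — however, this piece is \emph{deterministic} (up to the $\beta_{ik}$, which are constants), so it survives in $\tilde C_N$ but is killed by the derivative: $\partial_{z_1}\partial_{z_2}$ of $s(z_1)s(z_2)\cdot(\text{const depending only on }k,i)$ is not zero. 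Hmm — so that reasoning is wrong. Let me reconsider.

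\textbf{Reconsidering.} The correct source of decay must be the prefactor $d_N^{-2}$ sitting outside, combined with the fact that after performing the two $z$-derivatives (equivalently, the contour integrals with $d_N^{-1}$-scale circles, each of which contributes $d_N^{2}$ through the $1/(z-\omega)^2$ and circle-length scaling, \emph{canceling} the $d_N^{-2}$), what remains is genuinely $N^{-2}\sum_k\sum_{i<k}(\cdots)$ with each factor $O(1)$ — giving $O(1)$, still not enough. The resolution I would pursue: the leading $s(z_1)s(z_2)\cdot N^{-2}\sum_k\sum_{i<k}\beta_{ik}$ term, being $O(1)$ and varying in $z_1,z_2$ only through the \emph{smooth} factor $s(z_1)s(z_2)$, contributes to $\partial_{z_1}\partial_{z_2}[\cdots]$ a term of size $d_N^{-2}\cdot|s'(z_1)s'(z_2)|\cdot O(1) = O(d_N^{-2})\to 0$, because the $z$-derivatives here are \emph{honest} $z$-derivatives (the quantity is $z_p = E + (\tau_p+i\eta_p)/d_N$, so $\partial_{z_p}$ of an $O(1)$ function of $z_p$ is again $O(1)$, and the external $d_N^{-2}$ is \emph{not} canceled — the derivatives act on $s(z_p)$ which is $O(1)$ with $O(1)$ derivatives in $z$, not in $\tau$). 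The cross term $s(z_1)\cdot[(G_k^{(2)})_{ii}-s(z_2)]$ and the doubly-fluctuating term are bounded via Lemma \ref{le:offdiag}: each fluctuation factor is $O_{L^4}((d_N/(\eta N))^{1/2})$, so by Cauchy-Schwarz the corresponding double sum is $d_N^{-2}\cdot N^{-2}\cdot N^2 \cdot (d_N/N)^{1/2}\cdot O(1) = O(d_N^{-2}(d_N/N)^{1/2})\to 0$ (the $d_N^{-2}$ again not canceled since these contour-integral/derivative operations produce only $O(1)$ factors when acting on slowly-varying-in-$z$ quantities). The upshot is that every piece is $o(1)$, giving the claim. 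The step I expect to be the real obstacle is controlling the fluctuation part uniformly: Lemma \ref{le:offdiag} is an unconditional resolvent bound, but here the entries are conditioned ($\mathbb{E}_k(G_k^{(p)})_{ii}$) and $G_k$ is an $(N-1)\times(N-1)$ minor, so I must check that conditioning does not destroy the $L^q$ bounds — it does not, since $\mathbb{E}_k$ is a contraction on every $L^q$ — and that the bound of Lemma \ref{le:offdiag} for $(G_k)_{ii}-s(z)$ with $z=\omega_p\in\mathcal{S}_{z_p}$ holds with $\im(\omega_p)\asymp \eta_p/d_N$, which it does by the statement's uniformity in $\tau,\eta,d_N$. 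Assembling these three contributions, taking $\sup$ over $\omega_p$ on the circles, and folding in the $O(((\tau_1-\tau_2)^2+(\eta_1-\eta_2)^2)^{-1})$ contour factor from Remark \ref{rem:cauchy} completes the argument.
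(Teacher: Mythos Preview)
Your approach is essentially the same as the paper's: split $(G_k^{(p)})_{ii}=s(z_p)+[(G_k^{(p)})_{ii}-s(z_p)]$ via Lemma~\ref{le:offdiag}, observe that the leading deterministic piece $s(z_1)^2s(z_2)^2\cdot N^{-2}\sum_{k}\sum_{i<k}\beta_{ik}$ depends smoothly on $z_1,z_2$ so that $d_N^{-2}\partial_{z_1}\partial_{z_2}$ of it is $O(d_N^{-2})$, and handle the fluctuation remainder through the Cauchy-integral mechanism of Remark~\ref{rem:cauchy}.

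There is, however, a bookkeeping slip in your treatment of the cross and doubly-fluctuating terms. You claim the contour-integral/derivative operation contributes only $O(1)$ because these pieces are ``slowly-varying in $z$'', leaving you with $O(d_N^{-2}(d_N/N)^{1/2})$. That is not right: the quantity $(G_k(z))_{ii}-s(z)$ is \emph{not} slowly varying on the scale $d_N^{-1}$ of the contour radius (its $z$-derivative involves $(G_k^2)_{ii}$, which can be as large as $d_N^2/\eta^2$). The correct accounting is precisely the one in Remark~\ref{rem:cauchy}: the two $d_N^{-1}$-radius contour integrals produce a factor of order $d_N^2$, which \emph{cancels} the external $d_N^{-2}$, so the error term survives at order $O_{L^1}((d_N/N)^{1/2})$, not $O(d_N^{-2}(d_N/N)^{1/2})$. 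Fortunately this corrected bound is still $o(1)$ on all mesoscopic scales $1\ll d_N\ll N$, so your conclusion stands and matches the paper exactly --- but the reasoning you gave for why the error vanishes was wrong.
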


\begin{proof}
By Lemma \ref{le:offdiag} we have
\begin{align}
&N^{-2}\sum_{k=1}^{N}\sum_{i<k}\mathbb{E}_{k}(G^{(1)}_{k})_{ii}\mathbb{E}_{k}(G^{(2)}_{k})_{ii}\beta_{ik}  = N^{-2}s(z_{1})s(z_{2})\sum_{k=1}^{N}\sum_{i<k}\beta_{ik}+O_{L^1}(\sqrt{d_{N}/N}) \label{4th_mom_est}
\end{align}
where we assumed that the fourth moment of each matrix entry $W_{ik}$ is finite. Since the left-hand side of \eqref{4th_mom_est} is analytic, we can use the strategy outlined in Remark \ref{rem:cauchy}. Hence after inserting \eqref{4th_mom_est} into the left-hand side of \eqref{stat_4th_mom_est}, the $O_{L^1}(\sqrt{d_{N}/N})$ bound remains of the same order while the leading term is of order $d_{N}^{-2}$ since $s$ and $s'$ are analytic and uniformly bounded provided $E \in (-2+\delta,2-\delta)$. This completes the proof of the Lemma.
\end{proof}

\begin{remark}This shows that in the mesoscopic regime, the process $V_{N}(z)$ is \textit{insensitive} to the value of the fourth cumulants of the matrix elements, provided they are finite. This is in contrast to the regime of global fluctuations where the fourth cumulant is known to appear explicitly in the limiting covariance formula, see \textit{e.g.} \cite{BY05,Sh11}. This suggests that the Gaussian fluctuations obtained in the mesoscopic regime are more universal than in the global regime.
\end{remark}

The remaining challenge now is to compute the first sum in \eqref{s1k}. For that we shall show that to leading order on the scales $1 \ll d_{N} \ll N^{1/3}$, the sum
\begin{equation}
\mathcal{S}_{1,k}(z_{1},z_{2}) := \frac{1}{N}\sum_{p<k,q<k}\mathbb{E}_{k}(G_{k}(z_{1}))_{qp}\mathbb{E}_{k}(G_{k}(z_{2}))_{pq} \label{s1}
\end{equation}
satisfies a self-consistent equation. The proof relies on heavy computations.

\subsection{Calculation of the sum $\mathcal{S}_{1,k}(z_{1},z_{2})$}
\label{se:comp}
The goal of this section will be to prove the following
\begin{proposition}
\label{prop:s1}
Consider the quantity $\mathcal{S}_{1,k}(z_{1},z_{2})$ in \eqref{s1}. We have the estimate
\begin{equation}
z_{1}\mathcal{S}_{1,k}(z_{1},z_{2})=-s(z_{1})\mathcal{S}_{1,k}(z_{1},z_{2})-\frac{N-k}{N}s(z_{2})-\frac{N-k}{N}s(z_{2})\mathcal{S}_{1,k}(z_{1},z_{2})+O_{L^1}((d_{N}^{3}N^{-1})^{1/2}) \label{recs1}
\end{equation}
where the $O_{L^1}$ bound is uniform in $k=1,\ldots,N$.
\end{proposition}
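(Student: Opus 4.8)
The plan is to read \eqref{recs1} as a self-consistent equation for $\mathcal{S}_{1,k}(z_1,z_2)$ and to produce it by a resolvent expansion of one of the two resolvents in \eqref{s1}, in the spirit of the covariance computations of Bai--Silverstein \cite{BY05,BS10}. Throughout I would fix $k$, abbreviate $G^{(p)}_k:=G_k(z_p)$, and control every error in $L^1$ \emph{uniformly in $k$}. The starting point is $\mathcal H_k G^{(1)}_k = I + z_1 G^{(1)}_k$: taking the $(q,p)$ entry for $p,q<k$ and applying $\mathbb{E}_k$ gives
\[
z_1\,\mathbb{E}_k (G^{(1)}_k)_{qp} = -\delta_{qp} + \sum_{m\neq k}\mathbb{E}_k\bigl[(\mathcal H_k)_{qm}(G^{(1)}_k)_{mp}\bigr].
\]
Multiplying by $\mathbb{E}_k(G^{(2)}_k)_{pq}$, summing over $p,q<k$ and dividing by $N$, the term $-\delta_{qp}$ contributes $-N^{-1}\sum_{p<k}\mathbb{E}_k(G^{(2)}_k)_{pp}$, which by Lemma \ref{le:offdiag} equals $-\tfrac{k-1}{N}s(z_2)$ up to an $L^1$ error of order $(d_N/N)^{1/2}$.

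The remaining triple sum I would split according to whether the internal index $m$ lies in the ``inner'' block $m<k$, where the entries of $\mathcal H_k$ are measurable for $\mathbb{E}_k$, or the ``outer'' block $m>k$, whose entries are integrated out by $\mathbb{E}_k$. For the outer part, $(\mathcal H_k)_{qm}=\mathcal H_{qm}$ is centred with $\mathbb E|\mathcal H_{qm}|^2=N^{-1}$ and $\mathbb E\mathcal H_{qm}^2=0$, so a cumulant/integration-by-parts expansion in this variable---whose higher-cumulant remainders are power-of-$N$ smaller by the moment bounds of Definition \ref{def:wig}---reduces the leading contribution to $-N^{-1}\mathbb{E}_k[(G^{(1)}_k)_{mm}(G^{(1)}_k)_{qp}]$; replacing $(G^{(1)}_k)_{mm}$ by $s(z_1)$ via Lemma \ref{le:offdiag} and summing the now-free index $m$ over the $N-k$ values above $k$ is where the prefactors $\tfrac{N-k}{N}$ in \eqref{recs1} originate. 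For the inner part I would bring in the Schur complement formula once more, together with the near-deterministic relation $z_p + N^{-1}\tr G^{(p)}_k \approx z_p+s(z_p)=-s(z_p)^{-1}$ already used in Section \ref{se:simp}; this is the computational core and, carried out essentially as in \cite{BY05}, it contributes the remaining $-s(z_1)\mathcal{S}_{1,k}$ and $-\tfrac{N-k}{N}s(z_2)$ pieces. Assembling the three contributions yields \eqref{recs1} up to controlled errors.

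The book-keeping above is routine once the errors are tamed, so the main obstacle is the \emph{uniform-in-$k$} $L^1$ control of all replacements at the mesoscopic imaginary part $\im z_p=\eta_p/d_N$: the obvious resolvent bounds carry factors $(\im z)^{-1}=d_N/\eta$ that diverge. The remedy is to route every error term through an \emph{off-diagonal} resolvent entry, for which Lemma \ref{le:offdiag} gives the gain $(G_k)_{pq}=O_{L^s}\bigl((d_N/(\eta N))^{1/2}\bigr)$, and then to bound the residual sums by Cauchy--Schwarz together with the Hilbert--Schmidt identity $\sum_p|(G_k(z))_{qp}|^2=\im (G_k(z))_{qq}/\im z=O(d_N/\eta)$. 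This turns the $N^{-1}(\im z)^{-3}$-type losses anticipated in the introduction into the stated $O_{L^1}\bigl((d_N^3/N)^{1/2}\bigr)$, which is $o(1)$ precisely when $\gamma<1/3$. A last point to verify is the range $k$ close to $N$, where the outer block $\{k+1,\dots,N\}$ is short so the cumulant expansion sums over few indices: there the prefactors $\tfrac{N-k}{N}$ are themselves small, while Lemma \ref{le:offdiag}, being a bound on the $(N-1)$-dimensional resolvent $G_k$ that does not degrade as $k\to N$, still supplies the control needed to close the argument uniformly.
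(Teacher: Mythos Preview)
Your inner/outer split according to whether the running index $m$ lies below or above $k$ is a reasonable first move, but the proposal breaks down at the inner block, and the accounting of terms does not line up with \eqref{recs1}.

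For the outer block ($m>k$), your cumulant expansion differentiates only inside the \emph{first} conditional expectation: since $\mathbb{E}_k(G^{(2)}_k)_{pq}$ has already integrated out $\mathcal H_{qm}$, the second resolvent is invisible to $\partial_{\overline{\mathcal H_{qm}}}$. The outer block therefore yields $-\tfrac{N-k}{N}\,s(z_1)\,\mathcal S_{1,k}$, not any $s(z_2)$-weighted term. So the ``prefactors $\tfrac{N-k}{N}$'' in \eqref{recs1}---which multiply $s(z_2)(1+\mathcal S_{1,k})$---cannot originate there. Your bookkeeping also double-counts the constant term: the identity piece already gives $-\tfrac{k-1}{N}s(z_2)$, yet you claim the inner block contributes a further $-\tfrac{N-k}{N}s(z_2)$.

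The mechanism you are missing is the coupling of the entry $W_{pj}$ to the \emph{second} resolvent $G^{(2)}_k$, and this is precisely what your vaguely-treated inner block must supply. The paper does not use a Schur complement here; it uses the single-entry removal $G_{kpj}$ introduced above \eqref{gkijpert} and iterates the resulting perturbation identity. After decoupling $W_{pj}$ from $G^{(1)}_k$ (Lemma \ref{le:gexpan}), the decisive step is Lemma \ref{le:s12}: one expands $(G^{(2)}_k)_{qp}-(G^{(2)}_{kpj})_{qp}$ via \eqref{gkijpert}, which produces a factor $|W_{pj}|^{2}(G^{(2)}_k)_{pp}\approx s(z_2)$; the now-free summation over $p$ then gives the $s(z_2)\mathcal S_{1,k}$ term. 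Showing that the ``fully decoupled'' sum $\tilde{\mathcal E}_{1,2}$ is negligible requires a further second-order removal (the ``maximally expanded'' argument in that proof), and this is exactly the step responsible for the $(d_N^3/N)^{1/2}$ rate. Invoking $z+s(z)=-s(z)^{-1}$ alone cannot generate the cross-resolvent term; you need the $G_{kpj}$ machinery or an equivalent decoupling.
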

In what follows, the proof of this Proposition will be divided into a number of Lemmas which eventually culminate in Lemma \eqref{le:s12}. We start with the Green function perturbation identity
\begin{equation}
\label{pertident}
z_{1}G_{k}(z_{1}) = -I_{N-1}+N^{-1/2}\sum_{\substack{i \neq k\\ j \neq k}}W_{ij}e_{i}e_{j}'G_{k}(z_{1})
\end{equation}
where the vector $e_{i}$ has entries $(e_{i})_{j} = \delta_{i,j}$ for $j<k$ and $(e_{i})_{j} = \delta_{i,j-1}$. It turns out to be helpful to separate out the correlations between $W_{ij}$ and $G_{k}$ by introducing the matrix $G_{kij}$ defined as the resolvent of the matrix $\mathcal{H}_{k}$ with entries $(i,j)$ and $(j,i)$ replaced with $0$. Simple algebra shows this is a perturbation of the original resolvent:
\begin{equation}
\label{gkijpert}
G^{(1)}_{k}-G^{(1)}_{kij} = -N^{-1/2}G^{(1)}_{kij}c_{ij}(W_{ij}e_{i}e_{j}'+W_{ji}e_{j}e_{i}')G^{(1)}_{k}.
\end{equation}
where $c_{ij}=1$ if $i \neq j$ and $c_{ii}=1/2$. We now insert \eqref{gkijpert} directly into \eqref{pertident} leading to the following expansion.
\begin{lemma}
\label{le:gexpan}
The matrix elements of the resolvent $(G^{(1)}_{k})_{pq}$ satisfy
\begin{equation}
\begin{split}
&z_{1}(G^{(1)}_{k})_{pq} = -\delta_{pq}+N^{-1/2}\sum_{j \neq k}W_{pj}(G^{(1)}_{kpj})_{jq}-s(z_{1})(1-3/(2N))(G^{(1)}_{k})_{pq}\\
&-N^{-1}\sum_{j \neq k}c_{pj}|W_{pj}|^{2}((G^{(1)}_{kpj})_{jj}-s(z_{1}))(G^{(1)}_{k})_{pq}-N^{-1}\sum_{j \neq k}c_{pj}(|W_{pj}|^{2}-1)s(z_{1})(G^{(1)}_{k})_{pq}\\
&-N^{-1}\sum_{j \neq k}c_{pj}W_{pj}^{2}(G^{(1)}_{kpj})_{jp}(G^{(1)}_{k})_{jq} \label{expanlem}
\end{split}
\end{equation}
\end{lemma}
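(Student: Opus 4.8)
The plan is to establish \eqref{expanlem} as a purely algebraic identity: no probabilistic input is needed at this stage, only the two resolvent identities \eqref{pertident} and \eqref{gkijpert} together with some bookkeeping.

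First I would read off the $(p,q)$ matrix element of \eqref{pertident}. Since $(e_i e_j' G^{(1)}_k)_{pq}$ picks out precisely the contribution $i=p$, the double sum over $i,j$ collapses to a single sum over $j\neq k$, giving
\[
z_1 (G^{(1)}_k)_{pq} = -\delta_{pq} + N^{-1/2}\sum_{j\neq k} W_{pj}(G^{(1)}_k)_{jq}.
\]
Here $W_{pj}$ and $(G^{(1)}_k)_{jq}$ are correlated, so the key move is to decouple them: I would substitute $(G^{(1)}_k)_{jq} = (G^{(1)}_{kpj})_{jq} + \big[(G^{(1)}_k)_{jq} - (G^{(1)}_{kpj})_{jq}\big]$ and evaluate the bracket using \eqref{gkijpert}. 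Splitting the rank-two perturbation $W_{pj}e_pe_j' + W_{jp}e_je_p'$ into its two rank-one parts and reading off the $(j,q)$ entry of the resulting products gives
\[
(G^{(1)}_k)_{jq} - (G^{(1)}_{kpj})_{jq} = -N^{-1/2}c_{pj}\Big( W_{pj}(G^{(1)}_{kpj})_{jp}(G^{(1)}_k)_{jq} + W_{jp}(G^{(1)}_{kpj})_{jj}(G^{(1)}_k)_{pq}\Big).
\]
Multiplying by $N^{-1/2}W_{pj}$, summing over $j\neq k$, and using $W_{jp}=\overline{W_{pj}}$ so that $W_{pj}W_{jp}=|W_{pj}|^2$, this produces the ``free'' term $N^{-1/2}\sum_{j}W_{pj}(G^{(1)}_{kpj})_{jq}$ of \eqref{expanlem}, the last term $-N^{-1}\sum_j c_{pj}W_{pj}^2(G^{(1)}_{kpj})_{jp}(G^{(1)}_k)_{jq}$, and an auxiliary term $-N^{-1}\sum_j c_{pj}|W_{pj}|^2(G^{(1)}_{kpj})_{jj}(G^{(1)}_k)_{pq}$.

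It then remains to rearrange this auxiliary term into the three summands actually appearing in \eqref{expanlem}. For this I would peel off the semicircle value by writing $(G^{(1)}_{kpj})_{jj} = \big((G^{(1)}_{kpj})_{jj} - s(z_1)\big) + s(z_1)$; in the resulting $s(z_1)$-contribution I would further write $|W_{pj}|^2 = (|W_{pj}|^2 - 1) + 1$; and finally I would use the elementary count $N^{-1}\sum_{j\neq k} c_{pj} = 1 - 3/(2N)$, which holds because among the $N-1$ indices $j\neq k$ exactly one, namely $j=p$, contributes $c_{pp} = 1/2$ while the other $N-2$ contribute $c_{pj}=1$. Collecting these pieces together with the term already isolated yields \eqref{expanlem}.

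Since every step is a deterministic identity there is no analytic obstacle; the only thing that needs care is the bookkeeping around the diagonal index $j=p$, where the rank-two perturbation in \eqref{gkijpert} degenerates and the convention $c_{pp}=1/2$ must be applied consistently --- this is exactly what produces the correction $1 - 3/(2N)$ rather than $1$ in the third term of \eqref{expanlem}.
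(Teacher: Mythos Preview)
Your proposal is correct and is exactly the derivation the paper has in mind: the authors simply state that the lemma follows by inserting \eqref{gkijpert} into \eqref{pertident}, and your write-up supplies precisely those details, including the correct bookkeeping $N^{-1}\sum_{j\neq k}c_{pj}=1-3/(2N)$ coming from the diagonal convention $c_{pp}=1/2$.
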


Now inserting \eqref{expanlem} into the definition \eqref{s1}, we obtain the decomposition
\begin{equation}
z_{1}\mathcal{S}_{1}(z_{1},z_{2})(k) = \mathcal{S}_{1,1}+\mathcal{S}_{1,2}+\mathcal{S}_{1,3}+\mathcal{S}_{1,4}+\mathcal{S}_{1,5}+\mathcal{S}_{1,6}
\end{equation}
where
\begin{align}
\mathcal{S}_{1,1} &= -N^{-1}\sum_{p<k}\mathbb{E}_{k}(G^{(2)}_{k})_{pp} \label{s11}\\
\mathcal{S}_{1,2} &= N^{-3/2}\sum_{\substack{j \neq k\\p<k,q<k}}W_{pj}\mathbb{E}_{k}(G^{(1)}_{kpj})_{jq}\mathbb{E}_{k}(G^{(2)}_{k})_{qp}
\label{s12}\\
\mathcal{S}_{1,3}&=-s(z_{1})(1-3/(2N))\mathcal{S}_{1} \label{s13}\\
\mathcal{S}_{1,4}&=-N^{-2}\sum_{\substack{j \neq k\\p<k,q<k}}c_{pj}\mathbb{E}_{k}|W_{pj}|^{2}((G^{(1)}_{kpj})_{jj}-s(z_{1}))(G^{(1)}_{k})_{pq}\mathbb{E}_{k}(G^{(2)}_{k})_{qp} \label{s14}\\
\mathcal{S}_{1,5}&=-N^{-2}\sum_{\substack{j \neq k\\ p<k,q<k}}c_{pj}\mathbb{E}_{k}(|W_{pj}|^{2}-1)s(z_{1})(G^{(1)}_{k})_{pq}\mathbb{E}_{k}(G^{(2)}_{k})_{qp} \label{s15}\\
\mathcal{S}_{1,6}&=-N^{-2}\sum_{\substack{j \neq k\\ p<k,q<k}}c_{pj}\mathbb{E}_{k}W_{pj}^{2}(G^{(1)}_{kpj})_{jp}(G^{(1)}_{k})_{jq}\mathbb{E}_{k}(G^{(2)}_{k})_{qp} \label{s16}
\end{align}

To estimate these sums we will apply Lemma \ref{le:offdiag} repeatedly, together with the following Lemma which allows us to bound the matrix elements of the perturbation $(G_{kij}(z))_{pq}$.
\begin{lemma}[Bound on the perturbation]
\label{le:gkij}
Provided that $W$ has finite moments of order $4s$, the same bound \eqref{gpq} of Lemma \ref{le:offdiag} holds with $(G_{kij}(z))_{pq}$ in place of $(G_{k}(z))_{pq}$.
\end{lemma}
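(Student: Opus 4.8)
The plan is to exploit that $\mathcal{H}_k-\mathcal{H}_{kij}$ has rank at most two and is supported on the indices $i,j$, so that the resolvent identity \eqref{gkijpert} determines the matrix elements of $G_{kij}$ explicitly in terms of those of $G_k$. Taking the $(p,q)$ entry of \eqref{gkijpert} gives
\[
(G_{kij})_{pq} = (G_k)_{pq} + N^{-1/2}c_{ij}\bigl[W_{ij}(G_{kij})_{pi}(G_k)_{jq} + W_{ji}(G_{kij})_{pj}(G_k)_{iq}\bigr],
\]
and setting the second index equal to $i$ and then to $j$ shows that the pair $\bigl((G_{kij})_{pi},(G_{kij})_{pj}\bigr)$ solves the $2\times2$ linear system $\mathcal{N}\,\bigl((G_{kij})_{pi},(G_{kij})_{pj}\bigr)^{\!\top}=\bigl((G_k)_{pi},(G_k)_{pj}\bigr)^{\!\top}$, where $\mathcal{N}=I-N^{-1/2}c_{ij}B$ and $B$ has entries of the form $W_{ij}(G_k)_{ab}$ with $a,b\in\{i,j\}$. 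Solving by Cramer's rule and substituting back into the display produces the exact identity
\[
(G_{kij})_{pq} = (G_k)_{pq} + \frac{1}{D}\,P\bigl(N^{-1/2}W_{ij},\,\{(G_k)_{ab}\}_{a,b\in\{p,q,i,j\}}\bigr),
\]
where $D=\det\mathcal{N}=1-N^{-1/2}c_{ij}\bigl(W_{ij}(G_k)_{ji}+W_{ji}(G_k)_{ij}\bigr)+N^{-1}c_{ij}^{2}|W_{ij}|^{2}\bigl((G_k)_{ij}(G_k)_{ji}-(G_k)_{ii}(G_k)_{jj}\bigr)$ and $P$ is a fixed polynomial with boundedly many monomials, each of which carries at least one factor $N^{-1/2}W_{ij}$ and, over all monomials, $|W_{ij}|$ to a power at most $2$. (When $i=j$ the perturbation has rank one and the identical argument runs with a $1\times1$ system and $D=1-N^{-1/2}c_{ii}W_{ii}(G_k)_{ii}$; I would dispose of this case first.)

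Second, I would split the estimate over the event $\Omega:=\{|D|\geq\tfrac12\}$ and its complement. On $\Omega$ one has $|D^{-1}|\leq 2$, so raising the identity to the $s$-th power, expanding $P$ into its (boundedly many) monomials, and applying H\"older's inequality reduces matters to bounding products of a single $W$-moment and a bounded number of $G_k$-moments. The assumption that $W$ has finite moments of order $4s$ is exactly what is needed: the H\"older split of the worst monomial, which contains $N^{-1}|W_{ij}|^{2}$, should be carried out with exponent $2$ on $|W_{ij}|^{2s}$, at a cost of $(\mathbb{E}|W_{ij}|^{4s})^{1/2}$, the remaining (bounded number of) $G_k$-factors being handled with large finite exponents. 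Each resulting $G_k$-moment is then controlled by Lemma \ref{le:offdiag}: an off-diagonal element contributes $(Cs)^{cs}(d_N/(\eta N))^{1/2}$ and a diagonal one is $O_{L^{r}}(1)$ for every $r$, after writing $(G_k)_{aa}=s(z)+[(G_k)_{aa}-s(z)]$ and using that $s$ is uniformly bounded near $E$. Since every monomial of $P$ carries a genuine $N^{-1/2}$, the whole correction term is $O_{L^s}(N^{-1/2})$, hence $O_{L^s}((d_N/(\eta N))^{1/2})$ for $N$ large (recall $d_N\to\infty$ while $\eta$ is fixed), and so the contribution of $\Omega$ to $\mathbb{E}|(G_{kij})_{pq}|^{s}$ obeys the claimed bound $(Cs)^{cs}(d_N/(\eta N))^{s/2}$. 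For a diagonal target one subtracts $s(z)$ at the outset and the same estimate applies to $(G_{kij})_{pp}-s(z)$.

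Third, on $\Omega^c$ I would use the deterministic bound $|(G_{kij})_{pq}|\leq\|G_{kij}(z)\|_{\mathrm{op}}\leq(\im z)^{-1}=d_N/\eta$ together with a tail bound for $\mathbb{P}(\Omega^c)$. Writing $D=1+\xi$, one has $\Omega^c\subseteq\{|\xi|>\tfrac12\}$, and Markov's inequality applied with the power $s$, combined with Lemma \ref{le:offdiag} and $\mathbb{E}|W_{ij}|^{4s}<\infty$, yields $\mathbb{P}(\Omega^c)\leq(Cs)^{cs}(d_N/\eta)^{s/2}N^{-s}$; the dominant term of $\xi$ here is $N^{-1}|W_{ij}|^{2}(G_k)_{ii}(G_k)_{jj}$, whose $s$-th moment is at most $N^{-s}(\mathbb{E}|W_{ij}|^{4s})^{1/2}\cdot O(1)$, and the linear-in-$W$ terms are smaller. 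Hence $\mathbb{E}\bigl[|(G_{kij})_{pq}|^{s}\mathbf{1}_{\Omega^c}\bigr]\leq(d_N/\eta)^{s}\,\mathbb{P}(\Omega^c)$, which is $\ll(d_N/(\eta N))^{s/2}$ on the mesoscopic scales under consideration (in particular whenever $d_N\ll N^{1/2}$, e.g. $d_N\ll N^{1/3}$). Adding the two contributions gives the lemma. I expect the only real difficulty to be in the second step: organising the polynomial $P$ so that it is transparent which matrix elements of $G_k$ appear diagonally and which off-diagonally, and arranging the H\"older splits so that none of them costs more than $(\mathbb{E}|W_{ij}|^{4s})^{1/2}$ — this is exactly what pins down the moment hypothesis. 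Everything else is a routine (if slightly tedious) application of Lemma \ref{le:offdiag}.
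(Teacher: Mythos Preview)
Your proof is correct (at least on the scales $d_N\ll N^{1/2}$ that your $\Omega^c$ estimate requires, which amply covers the paper's regime $d_N\ll N^{1/3}$), but it takes a more elaborate route than the paper's. You invert the $2\times2$ system for $\bigl((G_{kij})_{pi},(G_{kij})_{pj}\bigr)$ exactly via Cramer's rule, which introduces the random denominator $D$ and forces the split on $\{|D|\geq\tfrac12\}$; controlling the complement with Markov at power $s$ then costs you the restriction $d_N\ll N^{1/2}$, and lifting it by raising the Markov exponent would press against the $4s$-moment hypothesis when you try to separate $|W_{ij}|^{4s}$ from the $G_k$ factors by H\"older. (A minor point: in your $\Omega^c$ analysis the linear-in-$W$ piece of $\xi$ is actually the dominant one, not the quadratic piece, since its $s$-th moment scales like $(d_N/\eta)^{s/2}N^{-s}$; your final bound is nevertheless the right one.)

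The paper avoids the denominator issue entirely. It simply iterates the perturbation identity \eqref{melpert} once more---substituting the same formula for $(G_{kij})_{pi}$ and $(G_{kij})_{pj}$ on the right-hand side---and then bounds the surviving $G_{kij}$ factors by the crude deterministic operator-norm bound $d_N/\eta$. After Minkowski and a single Cauchy--Schwarz (which is precisely what generates the $4s$-moment requirement), this yields
\[
\mathbb{E}|(G_{kij})_{pq}|^{s}\leq c_s\,\mathbb{E}|(G_k)_{pq}|^{s}+O(N^{-s/2})+O\bigl((d_N/(\eta N))^{s}\bigr)
\]
on \emph{all} mesoscopic scales, with no bad event to control. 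Your exact inversion is more precise but buys nothing here: the target estimate only asks for a correction of size $O_{L^s}\bigl((d_N/(\eta N))^{1/2}\bigr)$, and the one-step iteration with the deterministic bound already delivers it.
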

\begin{proof}
Using the perturbation identity \eqref{gkijpert} we get
\begin{equation}
(G_{kij})_{pq} = (G_{k})_{pq}+c_{ij}W_{ij}N^{-1/2}(G_{kij})_{pi}(G_{k})_{jq}+c_{ij}W_{ji}N^{-1/2}(G_{kij})_{pj}(G_{k})_{iq} \label{melpert}
\end{equation}
Iterating \eqref{melpert} once more and applying Minkowski's inequality shows that
\begin{align*}
|(G_{kij})_{pq}|^{s} &\leq c_{s}|(G_{k})_{pq}|^{s}+c_{s}|W_{ij}|^{s}N^{-s/2}\left(|(G_{k})_{pi}(G_{k})_{jq}|^{s}+|(G_{k})_{pj}(G_{k})_{iq}|^{s}\right)\\
&+c_{s}|W_{ij}|^{2s}N^{-s}\left(|(G_{kij})_{pi}(G_{k})_{ji}(G_{k})_{jq}|^{s}+|(G_{kij})_{pj}(G_{k})_{ii}(G_{k})_{jq}|^{s}\right.\\
&\left.+|(G_{kij})_{pi}(G_{k})_{jj}(G_{k})_{iq}|^{s}+|(G_{kij})_{pj}(G_{k})_{ij}(G_{k})_{iq}|^{s}\right)
\end{align*}
for some constant $c_{s}$ depending only on $s$. From the deterministic bound $|(G_{kij})_{pq}| \leq \norm{G_{kij}} \leq d_{N}/\eta$ and \eqref{gpq}, we find by Cauchy-Schwarz that 
\begin{equation}
\mathbb{E}|(G_{kij})_{pq}|^{s} \leq c_{s}\mathbb{E}|(G_{k})_{pq}|^{s} + O(N^{-s/2})+O((d_{N}/(\eta N))^{s})
\end{equation}
where we used that $\mathbb{E}|W_{ij}|^{4s}$ is bounded. Note that the obtained error term is smaller than $(d_{N}/(\eta N))^{s/2}$ found in \eqref{gpq}. This completes the proof of the Lemma.
\end{proof}

As is suggested by the structure of the terms \eqref{s11}-\eqref{s16}, in what follows we will encounter many sums of the following generic form
\begin{equation}
\Omega := \sum_{\zeta_{1},\zeta_{2},\ldots,\zeta_{r}}\phi(W)\prod_{m \in I}\mathbb{E}_{k}(G^{(m)}_{k})_{\zeta_{m},\zeta_{m+1}} \label{omega}
\end{equation}
for some index set $I$ and a complex valued function $\phi$. By simply counting the number of occurences of $(G^{(m)}_{k})_{\zeta_{m},\zeta_{m+1}}$ we obtain
\begin{lemma}[Trivial bound]
\label{le:trivbound}
Suppose that $\mathbb{E}|\phi(W)|^{2}$ is uniformly bounded. Then
\begin{equation}
\label{trivbound}
\mathbb{E}|\Omega| \leq cN^{r}(d_{N}/N)^{|I|/2}
\end{equation}
with the same result holding if any of the factors in \eqref{omega} are replaced with the perturbation $G^{(m)}_{k\zeta_{a}\zeta_{b}}$ for any $a,b \in I$.
\end{lemma}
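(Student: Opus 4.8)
The plan is to bound $\mathbb{E}|\Omega|$ directly, term by term in the sum over $(\zeta_1,\dots,\zeta_r)$, with no cancellation exploited — this is exactly why the bound deserves to be called \emph{trivial}. First I would apply the triangle inequality to get
\[
\mathbb{E}|\Omega| \;\le\; \sum_{\zeta_1,\dots,\zeta_r}\mathbb{E}\Big[\,|\phi(W)|\prod_{m\in I}\big|\mathbb{E}_{k}(G^{(m)}_{k})_{\zeta_m,\zeta_{m+1}}\big|\,\Big].
\]
For a fixed index tuple I would then use the generalized Hölder inequality with exponent $2$ on the factor $|\phi(W)|$ and exponent $2|I|$ on each of the $|I|$ remaining factors (so that $\tfrac12+\sum_{m\in I}\tfrac{1}{2|I|}=1$), giving the bound $\|\phi(W)\|_{2}\prod_{m\in I}\big\|\mathbb{E}_{k}(G^{(m)}_{k})_{\zeta_m,\zeta_{m+1}}\big\|_{2|I|}$. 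By hypothesis $\|\phi(W)\|_{2}$ is uniformly bounded, and conditional Jensen together with the tower property gives $\|\mathbb{E}_{k}Y\|_{2|I|}\le\|Y\|_{2|I|}$, so the remaining task is to control $\big\|(G^{(m)}_{k})_{\zeta_m,\zeta_{m+1}}\big\|_{2|I|}$. This is furnished by Lemma~\ref{le:offdiag} with $s=2|I|$: it is $O\big((d_N/(\eta N))^{1/2}\big)$ when $\zeta_m\ne\zeta_{m+1}$, and $O(1)$ in the diagonal case (writing $(G^{(m)}_{k})_{pp}=s(z_m)+[(G^{(m)}_{k})_{pp}-s(z_m)]$ and using that $s$ is bounded for $E$ in the bulk). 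The constants produced depend on $|I|$, but $|I|$ stays bounded by a fixed number in every application, so this is harmless, as is absorbing the fixed constants $\eta_m$ into $c$.

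With these per-term estimates in hand I would split the index sum according to how many of the matrix elements $(G^{(m)}_{k})_{\zeta_m,\zeta_{m+1}}$ land on the diagonal. On the generic piece, where $\zeta_m\ne\zeta_{m+1}$ for all $m\in I$, each of the at most $N^r$ terms contributes $O\big((d_N/N)^{|I|/2}\big)$, which already gives the asserted bound $c\,N^r(d_N/N)^{|I|/2}$. On a degenerate piece, where a nonempty set of factors is forced onto the diagonal, each such coincidence identifies a pair of summation variables, so the number of surviving terms drops; one checks that in every index pattern occurring in \eqref{s11}--\eqref{s16} each coincidence trades a factor of size $(Nd_N)^{1/2}\gg1$ (a free index times an off-diagonal matrix element) for an $O(1)$ diagonal factor, so the degenerate pieces are dominated by the generic one (this uses $d_N\to\infty$ in an essential way). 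Summing the finitely many pieces preserves the bound.

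Finally, the variant with one or more factors replaced by the perturbed resolvent $G^{(m)}_{k\zeta_a\zeta_b}$ follows by the same argument with Lemma~\ref{le:gkij} in place of Lemma~\ref{le:offdiag}: it provides the identical $L^{2|I|}$ bounds, and the moment condition it requires (finite moments of order $4s$) is automatic because the entries in Definition~\ref{def:wig} are sub-Gaussian and hence have finite moments of all orders; conditional Jensen applies to $\mathbb{E}_k(G^{(m)}_{k\zeta_a\zeta_b})_{\cdot}$ verbatim. The only step requiring any care — and it is minor — is the bookkeeping of the degenerate pieces in the middle paragraph, i.e.\ confirming that colliding summation indices can only improve the estimate; everything else is a direct combination of Hölder's inequality with the resolvent bounds already established.
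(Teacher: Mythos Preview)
Your proposal is correct and follows the same approach as the paper: the paper's proof is the single sentence ``This follows by repeatedly applying the Cauchy-Schwarz inequality in conjunction with Lemmas~\ref{le:offdiag} and~\ref{le:gkij},'' and your generalized H\"older step with exponents $(2,2|I|,\dots,2|I|)$ is exactly what iterated Cauchy--Schwarz produces, after which the resolvent moment bounds of Lemmas~\ref{le:offdiag} and~\ref{le:gkij} finish the job. Your explicit bookkeeping of the diagonal degeneracies (trading a free index and an off-diagonal factor of combined size $(Nd_N)^{1/2}$ for an $O(1)$ diagonal factor) is a detail the paper's terse proof leaves implicit, so if anything your write-up is more careful than the original.
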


\begin{proof}
This follows by repeatedly applying the Cauchy-Schwarz inequality in conjunction with Lemmas \ref{le:offdiag} and \ref{le:gkij}.
\end{proof}

We start by giving some first estimates on the terms $\mathcal{S}_{1,1}$ to $\mathcal{S}_{1,6}$ defined in \eqref{s11}-\eqref{s16}.
\begin{lemma}[Estimates of $\mathcal{S}_{1,1}$, $\mathcal{S}_{1,3}$, $\mathcal{S}_{1,4}$ and $\mathcal{S}_{1,6}$]
We have the following bounds, holding uniformly in $k=1,\ldots,N$:
\begin{align}
\mathcal{S}_{1,1} &= -(N-k)s(z_{2})+O_{L^1}((d_{N}N^{-1})^{1/2}) \label{s11bound}\\
\mathcal{S}_{1,3} &= -s(z_{1})\mathcal{S}_{1}+O_{L^1}(d_{N}/N)\label{s13bound}\\
\mathbb{E}|\mathcal{S}_{1,4}| &\leq c\sqrt{d_{N}^{3}N^{-1}} \label{s14bound}\\
\mathbb{E}|\mathcal{S}_{1,6}| &\leq c\sqrt{d_{N}^{3}N^{-1}} \label{s16bound}
\end{align}
\end{lemma}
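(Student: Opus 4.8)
The plan is to establish the four bounds one at a time; in every case, after removing the nested conditional expectations at the outset via conditional Jensen, $|\mathbb{E}_k X|\le\mathbb{E}_k|X|$, the estimate reduces to Cauchy--Schwarz (or H\"older, for triple products) combined with the resolvent bound of Lemma \ref{le:offdiag}, its perturbed version Lemma \ref{le:gkij}, the counting bound of Lemma \ref{le:trivbound}, and the moment estimate $\mathbb{E}|W_{ij}|^q<(Cq)^q$ of Definition \ref{def:wig}. For \eqref{s11bound}, the sum $\mathcal{S}_{1,1}$ in \eqref{s11} consists of diagonal resolvent entries, and the diagonal case of Lemma \ref{le:offdiag} (say with $s=1$) gives $\mathbb{E}|(G^{(2)}_k)_{pp}-s(z_2)|\le C(d_N/(\eta N))^{1/2}$ uniformly in the indices, hence $\mathbb{E}|\mathbb{E}_k(G^{(2)}_k)_{pp}-s(z_2)|\le C(d_N/N)^{1/2}$; replacing each entry by $s(z_2)$, summing over the $O(N)$ indices in \eqref{s11} and using the prefactor $N^{-1}$ produces the main term displayed in \eqref{s11bound}, and the triangle inequality bounds the remaining $L^1$-error by $N^{-1}\cdot N\cdot C(d_N/N)^{1/2}=C(d_N/N)^{1/2}$.

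Estimate \eqref{s13bound} is purely algebraic: from $\mathcal{S}_{1,3}=-s(z_1)(1-3/(2N))\mathcal{S}_1$ we get $\mathcal{S}_{1,3}+s(z_1)\mathcal{S}_1=\tfrac{3}{2N}s(z_1)\mathcal{S}_1$, and since $|s(z_1)|$ is bounded uniformly in $N$ for $E\in(-2+\delta,2-\delta)$ it suffices to show the crude bound $\mathbb{E}|\mathcal{S}_1|=O(d_N)$ for the sum \eqref{s1}. Splitting \eqref{s1}, the diagonal part $p=q$ is $O(1)$ because each $(G^{(m)}_k)_{pp}$ equals $s(z_m)$ up to $O_{L^2}((d_N/N)^{1/2})$ by Lemma \ref{le:offdiag}, while the off-diagonal part $p\ne q$ is, by conditional Jensen, Cauchy--Schwarz and Lemma \ref{le:offdiag}, at most $N^{-1}\cdot N^{2}\cdot\sup_{p\ne q}(\mathbb{E}|(G^{(1)}_k)_{qp}|^2)^{1/2}(\mathbb{E}|(G^{(2)}_k)_{pq}|^2)^{1/2}\le N^{-1}\cdot N^{2}\cdot Cd_N/N=O(d_N)$. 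Hence $\tfrac{3}{2N}s(z_1)\mathcal{S}_1=O_{L^1}(d_N/N)$.

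For \eqref{s14bound} and \eqref{s16bound}, both $\mathcal{S}_{1,4}$ in \eqref{s14} and $\mathcal{S}_{1,6}$ in \eqref{s16} are instances of the generic sum $\Omega$ of \eqref{omega} over the three indices $j,p,q$, with scalar weight $\phi(W)$ equal to $c_{pj}|W_{pj}|^2$ for $\mathcal{S}_{1,4}$ and $c_{pj}W_{pj}^2$ for $\mathcal{S}_{1,6}$, which satisfies $\mathbb{E}|\phi(W)|^2<\infty$ by Definition \ref{def:wig}. In $\mathcal{S}_{1,4}$ the three resolvent factors are $(G^{(1)}_{kpj})_{jj}-s(z_1)$, $(G^{(1)}_k)_{pq}$ and $(G^{(2)}_k)_{qp}$; the first is a \emph{centered} diagonal entry, hence $O_{L^s}((d_N/N)^{1/2})$ by Lemmas \ref{le:offdiag} and \ref{le:gkij}, and on the range $p\ne q$ the other two are genuinely off-diagonal, so the counting of Lemma \ref{le:trivbound} bounds this part by $N^{-2}\cdot N^{3}\cdot(d_N/N)^{3/2}=(d_N^{3}/N)^{1/2}$. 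On the coincidence range $p=q$ there is one summation index fewer but one small factor fewer, the two now-diagonal entries being controlled by the deterministic bound $\norm{G_k}\le d_N/\eta$, so this part is only $O_{L^1}((d_N/N)^{1/2})$, which is $o((d_N^{3}/N)^{1/2})$ since $d_N\to\infty$. For $\mathcal{S}_{1,6}$ the factors $(G^{(1)}_{kpj})_{jp}$, $(G^{(1)}_k)_{jq}$, $(G^{(2)}_k)_{qp}$ are all off-diagonal on the generic range $j\ne p$, $j\ne q$, $q\ne p$, where the same count again gives $(d_N^{3}/N)^{1/2}$, whereas each coincidence $j=p$, $j=q$ or $q=p$ converts exactly one factor into a non-small diagonal entry while eliminating one free index, so, again bounding those diagonal factors by $\norm{G_k}\le d_N/\eta$, the coincidence sub-sums are $O_{L^1}((d_N/N)^{1/2})$ or $O_{L^1}(N^{-1}(d_N/\eta)^{3})$, both $o((d_N^{3}/N)^{1/2})$ in the regime $1\ll d_N\ll N^{1/3}$.

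The only step demanding genuine care --- and the place where the hypothesis $d_N\ll N^{1/3}$ actually enters --- is the bookkeeping of these diagonal coincidences in $\mathcal{S}_{1,4}$ and especially $\mathcal{S}_{1,6}$: Lemma \ref{le:trivbound} is phrased for factors treated uniformly as $O_{L^s}((d_N/N)^{1/2})$, which fails for diagonal entries, so one must check case by case that whenever a collision of summation indices degrades a small factor to an $O(1)$ one it simultaneously removes a summation index, and that the worst surviving term, of size $N^{-1}(d_N/\eta)^{3}$, is dominated by $(d_N^{3}/N)^{1/2}$ precisely when $d_N\ll N^{1/3}$. Everything else --- the uniform moment bounds of Definition \ref{def:wig}, the resolvent estimates of Lemmas \ref{le:offdiag}--\ref{le:gkij}, the repeated Cauchy--Schwarz and H\"older steps, and the conditional Jensen inequality used to absorb the expectations $\mathbb{E}_k$ (and, after the $k$-sum, $\mathbb{E}_{k-1}$) --- is routine.
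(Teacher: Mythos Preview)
Your approach is the paper's own --- the paper's entire proof is the single sentence ``These estimates are straightforward consequences of the trivial bound of Lemma~\ref{le:trivbound}'' --- and your expansion of that sentence is largely correct. There is, however, one genuine slip in your coincidence analysis for $\mathcal{S}_{1,4}$ (and by the same token $\mathcal{S}_{1,6}$).

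You write that on the range $p=q$ in $\mathcal{S}_{1,4}$ ``the two now-diagonal entries being controlled by the deterministic bound $\norm{G_k}\le d_N/\eta$'' leads to an $O_{L^1}((d_N/N)^{1/2})$ contribution. But with that crude bound the arithmetic actually gives
\[
N^{-2}\cdot N^{2}\cdot (d_N/N)^{1/2}\cdot (d_N/\eta)^{2}\;=\;\eta^{-2}\,d_N^{5/2}N^{-1/2},
\]
which is \emph{larger} than the target $(d_N^{3}/N)^{1/2}$ whenever $d_N\to\infty$, so the bound \eqref{s14bound} would fail. The fix is immediate: Lemma~\ref{le:offdiag} already gives $(G^{(m)}_k)_{pp}=s(z_m)+O_{L^s}((d_N/N)^{1/2})$, hence the uncentered diagonal entries are $O_{L^s}(1)$, not $O(d_N/\eta)$. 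With this (equally available) bound the $p=q$ contribution is $N^{-2}\cdot N^{2}\cdot (d_N/N)^{1/2}\cdot O(1)^{2}=(d_N/N)^{1/2}$, exactly as you stated. The same correction handles all coincidence cases in $\mathcal{S}_{1,6}$: single coincidences give $O_{L^1}(d_N/N)$ and the triple coincidence $j=p=q$ gives $O_{L^1}(N^{-1})$, both harmless.

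A consequence worth noting: once you use the $O_{L^s}(1)$ bound for diagonals, \emph{none} of the four estimates in this lemma requires $d_N\ll N^{1/3}$. That hypothesis enters the paper only through the analysis of $\mathcal{S}_{1,2}$ in Lemma~\ref{le:s12}, where the ``maximally expanded'' terms genuinely produce errors of size $d_N^{3}/N$. Your closing paragraph therefore locates the role of $d_N\ll N^{1/3}$ in the wrong place.
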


\begin{proof}
These estimates are straightforward consequences of the trivial bound of Lemma \ref{le:trivbound}.
\end{proof}
With $\mathcal{S}_{1,5}$ we have to take a bit more care because the trivial bound actually produces a bound of order $d_{N}$ which is divergent. To fix this we have to exploit independence and the fact that $\mathbb{E}|W_{pj}|^{2}=1$.
\begin{lemma}[Estimate for $\mathcal{S}_{1,5}$]
We have the following bound, holding uniformly in $k=1,\ldots,N$:
\begin{equation}
\mathbb{E}|\mathcal{S}_{1,5}| \leq c \sqrt{d_{N}^{2}/N}
\end{equation}
\end{lemma}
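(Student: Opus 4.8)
The trivial bound of Lemma \ref{le:trivbound} applied to \eqref{s15} only gives an estimate of order $d_{N}$, which diverges in the mesoscopic regime: the triple sum over $j\neq k$, $p<k$, $q<k$ has $\sim N^{3}$ terms, each of the two resolvent factors contributes only $(d_{N}/N)^{1/2}$, and there is merely a prefactor $N^{-2}$. The plan is to recover an extra factor $N^{1/2}$ from the cancellation $\mathbb{E}(|W_{pj}|^{2}-1)=0$ upon summing the index $j$, which enters the factor $|W_{pj}|^{2}-1$ but only very weakly the resolvents. Write $\mathcal{F}_{k}$ for the $\sigma$-algebra generated by the upper-left $k\times k$ corner of $W$, so that $\mathbb{E}_{k}=\mathbb{E}[\,\cdot\mid\mathcal{F}_{k}\,]$. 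I would split the sum over $j$ according to whether $j<k$ or $j>k$ and treat the two ranges separately.

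For the range $j<k$: since $p,j\leq k-1$, the entry $W_{pj}$ lies in the corner and is $\mathcal{F}_{k}$-measurable, hence $\mathbb{E}_{k}[(|W_{pj}|^{2}-1)(G^{(1)}_{k})_{pq}]=(|W_{pj}|^{2}-1)\,\mathbb{E}_{k}(G^{(1)}_{k})_{pq}$; moreover $G_{k}$ does not involve the summation index $j$ at all, so this last factor is independent of $j$. Thus the $j<k$ part of \eqref{s15} equals
\[
-\frac{s(z_{1})}{N^{2}}\sum_{p<k,\,q<k}\Bigl(\,\sum_{j<k}c_{pj}(|W_{pj}|^{2}-1)\Bigr)\,\mathbb{E}_{k}(G^{(1)}_{k})_{pq}\,\mathbb{E}_{k}(G^{(2)}_{k})_{qp},
\]
and $\sum_{j<k}c_{pj}(|W_{pj}|^{2}-1)$, being a sum of at most $N$ independent centered variables with uniformly bounded moments, has $L^{3}$-norm of order $N^{1/2}$ by the Marcinkiewicz--Zygmund inequality. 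Applying H\"older's inequality with three exponents equal to $3$, then conditional Jensen and Lemma \ref{le:offdiag} (which controls each off-diagonal resolvent entry in $L^{q}$ by $(d_{N}/N)^{1/2}$ and each diagonal entry by $O(1)$), the $p\neq q$ terms contribute $O(N^{-2}\cdot N^{2}\cdot N^{1/2}\cdot (d_{N}/N))=O(\sqrt{d_{N}^{2}/N})$ and the diagonal $p=q$ terms only $O(N^{-1/2})$.

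For the range $j>k$: now $W_{pj}$ is not in the $k\times k$ corner, so it is independent of $\mathcal{F}_{k}$, and it is independent of every matrix entry other than $W_{pj}$ and $W_{jp}$. Rearranging the perturbation identity \eqref{melpert} with $i=p$, I would write $(G^{(1)}_{k})_{pq}=(G^{(1)}_{kpj})_{pq}+\Delta$, where $G_{kpj}$ is, as in \eqref{gkijpert}, the resolvent of $\mathcal{H}_{k}$ with the $(p,j)$ and $(j,p)$ entries set to zero (hence independent of $W_{pj}$), and $\Delta$ carries an explicit factor $N^{-1/2}W_{pj}$ or $N^{-1/2}W_{jp}$ together with two further resolvent entries, at least one of them off-diagonal. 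The leading term vanishes: conditioning on $\sigma(\mathcal{F}_{k},G_{kpj})$ pulls out $(G^{(1)}_{kpj})_{pq}$ and leaves $\mathbb{E}\bigl[\,|W_{pj}|^{2}-1\,\bigr]=0$, so $\mathbb{E}_{k}[(|W_{pj}|^{2}-1)(G^{(1)}_{kpj})_{pq}]=0$. For the remaining $\Delta$-terms, $(|W_{pj}|^{2}-1)W_{pj}$ is $O_{L^{q}}(1)$ for every $q$, so the explicit $N^{-1/2}$ improves the trivial bound of Lemma \ref{le:trivbound} — with the $G_{kpj}$-entries controlled via Lemma \ref{le:gkij} — by precisely $N^{1/2}$, turning $O(d_{N})$ into $O(d_{N}N^{-1/2})=O(\sqrt{d_{N}^{2}/N})$, while the genuinely diagonal configurations give only $O(d_{N}^{1/2}N^{-1})$. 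Summing the two ranges of $j$ yields $\mathbb{E}|\mathcal{S}_{1,5}|\leq c\sqrt{d_{N}^{2}/N}$.

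The step I expect to be most delicate is the $j>k$ case: one must arrange the conditioning carefully enough to see that the non-perturbative part $(G^{(1)}_{kpj})_{pq}$ of $(G^{(1)}_{k})_{pq}$ contributes nothing, and one has to keep precise track of which resolvent entries in the correction terms are off-diagonal (of size $(d_{N}/N)^{1/2}$) and which are diagonal (of size $O(1)$), since a single stray diagonal factor would already be fatal to the bound. The rest is routine bookkeeping with H\"older's inequality and Lemmas \ref{le:offdiag}, \ref{le:gkij} and \ref{le:trivbound}.
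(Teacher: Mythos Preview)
Your argument is correct, but it takes a considerably more elaborate route than the paper's. The paper observes that since $G_{k}$ is independent of the summation index $j$ and $\mathbb{E}_{k}(G^{(2)}_{k})_{qp}$ is $\mathcal{F}_{k}$-measurable, one can collapse the $q$-sum into $R_{p}:=\sum_{q<k}(G^{(1)}_{k})_{pq}\,\mathbb{E}_{k}(G^{(2)}_{k})_{qp}$ and write $\mathcal{S}_{1,5}=-s(z_{1})N^{-2}\sum_{p<k}\mathbb{E}_{k}\bigl[R_{p}\sum_{j\neq k}c_{pj}(|W_{pj}|^{2}-1)\bigr]$. A single Cauchy--Schwarz then separates $R_{p}$ from the $j$-sum: the latter has variance $O(N)$ by orthogonality of the centered $|W_{pj}|^{2}-1$, and the trivial bound yields $\mathbb{E}|R_{p}|^{2}=O(d_{N}^{2})$, giving $N^{-2}\cdot N\cdot d_{N}\cdot N^{1/2}=\sqrt{d_{N}^{2}/N}$ in one stroke.

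By contrast, you split $j<k$ versus $j>k$ and treat each range with a different mechanism (measurability in the first, decoupling via the perturbation identity \eqref{gkijpert} in the second). Both mechanisms are valid, and your bookkeeping of diagonal versus off-diagonal resolvent entries is accurate. What your approach buys is an explicit demonstration of \emph{why} the cancellation persists despite the conditional expectation $\mathbb{E}_{k}$ treating the two ranges of $j$ differently; what the paper's approach buys is brevity---Cauchy--Schwarz does not care whether $R_{p}$ and $\sum_{j}c_{pj}(|W_{pj}|^{2}-1)$ are independent, so no case distinction or perturbation expansion is needed.
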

\begin{proof}
First denote
\begin{equation}
R_{p} := \sum_{q<k}(G^{(1)}_{k})_{pq}\mathbb{E}_{k}(G^{(2)}_{k})_{qp}
\end{equation}
and note that the usual estimates imply that $\mathbb{E}|R_{p}|^{2} = O(d_{N}^{2})$. Now by Cauchy-Schwarz we can estimate $\mathcal{S}_{1,5}$ as
\begin{equation}
\begin{split}
\mathbb{E}|\mathcal{S}_{1,5}| &\leq N^{-2}\sum_{p<k}\left(\mathbb{E}|R_{p}|^{2}\,\mathbb{E}\sum_{j1\neq k,j2\neq k}c_{pj1}c_{pj2}(|W_{pj1}|^{2}-1)(|W_{pj2}|^{2}-1)\right)^{1/2}\\
&=N^{-2}\sum_{p<k}\left(\mathbb{E}|R_{p}|^{2}\sum_{j \neq k}c_{pj}^{2}\mathbb{E}(|W_{pj}|^{2}-1)^{2}\right)^{1/2}\leq c\sqrt{d_{N}^{2}/N}
\end{split}
\end{equation}
\end{proof}
To complete the proof of Proposition \ref{prop:s1} the main task is to control $\mathcal{S}_{1,2}$. The problem is that the matrix $G^{(2)}_{k}$ still depends on $W_{pj}$, so in the following we will replace $G^{(2)}_{k}$ with $G^{(2)}_{kpj}$. Remarkably, the leading order contribution will come from the error in making this replacement.
\begin{lemma}
\label{le:s12}
We have the following bound, holding uniformly in $k=1,\ldots,N$:
\begin{equation}
\label{s12est}
\mathcal{S}_{1,2} = -(N-k)N^{-1}s(z_{2})\mathcal{S}_{1,k}(z_{1},z_{2})+O_{L^1}((d_{N}^{3}N^{-1})^{1/2})
\end{equation}
\end{lemma}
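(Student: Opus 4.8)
The plan is to start from the defining sum
\[
\mathcal{S}_{1,2} = N^{-3/2}\sum_{\substack{j \neq k\\ p<k,\,q<k}} W_{pj}\,\mathbb{E}_{k}(G^{(1)}_{kpj})_{jq}\,\mathbb{E}_{k}(G^{(2)}_{k})_{qp},
\]
and to remove the dependence of $G^{(2)}_{k}$ on $W_{pj}$ by substituting the perturbed resolvent $G^{(2)}_{kpj}$ via the identity \eqref{melpert}. Writing $(G^{(2)}_{k})_{qp} = (G^{(2)}_{kpj})_{qp} + c_{pj}N^{-1/2}W_{pj}(G^{(2)}_{k})_{qp}(G^{(2)}_{kpj})_{jp} + c_{pj}N^{-1/2}W_{jp}(G^{(2)}_{k})_{qj}(G^{(2)}_{kpj})_{ip}$ — or, more directly, the expansion with the roles so that the factor $\overline{W_{pj}}W_{pj}=|W_{pj}|^{2}$ materializes — splits $\mathcal{S}_{1,2}$ into a piece where $W_{pj}$ decouples from both resolvents and remaining pieces. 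First I would argue that the decoupled piece vanishes to the required order: once $G^{(1)}_{kpj}$ and $G^{(2)}_{kpj}$ are both independent of $W_{pj}$, taking $\mathbb{E}_k$ and using $\mathbb{E}_k W_{pj}=0$ (for $p<k$, $j$ ranging over indices that include $j=k$ and $j\geq k$; one handles $j<k$ separately since then $\mathbb{E}_k W_{pj}=W_{pj}$ is not zero, but for $j<k$ both matrix elements are $\mathbb{E}_k$-measurable and the sum over such $j$ is controlled directly by Lemma \ref{le:trivbound}, giving an error term). The surviving contribution is precisely the ``error in the replacement'' advertised in the statement.

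Next I would isolate that surviving term. After substitution the dominant contribution has the shape
\[
N^{-2}\sum_{\substack{j\neq k\\ p<k,\,q<k}} c_{pj}\,|W_{pj}|^{2}\,\mathbb{E}_{k}(G^{(1)}_{kpj})_{jq}\,\mathbb{E}_{k}\big[(G^{(2)}_{k})_{qp}(G^{(2)}_{kpj})_{jp}\big],
\]
possibly together with a second structurally identical term coming from the $W_{jp}$ half of \eqref{melpert}; terms carrying $W_{pj}^{2}$ rather than $|W_{pj}|^{2}$ average to a lower order by the assumption $\mathbb{E}W_{pj}^{2}=0$ combined with a trivial bound. Now I replace $|W_{pj}|^{2}$ by its mean $1$, the fluctuation $|W_{pj}|^{2}-1$ costing only $O_{L^1}(\sqrt{d_N^2/N})$ by the same independence-plus-Cauchy-Schwarz argument used for $\mathcal{S}_{1,5}$; and I replace each perturbed resolvent $G^{(m)}_{kpj}$ by $G^{(m)}_{k}$, which by Lemma \ref{le:gkij} and Lemma \ref{le:trivbound} costs at most $O_{L^1}(\sqrt{d_N^{3}N^{-1}})$. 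That leaves
\[
N^{-2}\sum_{\substack{j\neq k\\ p<k,\,q<k}} c_{pj}\,\mathbb{E}_{k}(G^{(1)}_{k})_{jq}\,(G^{(2)}_{k})_{qp}(G^{(2)}_{k})_{jp},
\]
which I would rearrange by performing the $j$-sum first: $\sum_{j\neq k} c_{pj}(G^{(1)}_{k})_{jq}(G^{(2)}_{k})_{jp} = \tfrac12\big[(G^{(2)}_k G^{(1)}_k)_{pq}\big]$ up to the $c_{pj}$ correction, or more to the point $N^{-1}\sum_{j}(G^{(2)}_k)_{jp}(G^{(1)}_k)_{jq}$ concentrates, by Lemma \ref{le:offdiag}, around a diagonal term proportional to $s(z_2)$ when $p=q$ and is negligible otherwise — this is the step that produces the factor $-(N-k)N^{-1}s(z_2)$ after one recognizes $\sum_{p<k}\sum_{q<k}(G^{(1)}_k)_{pq}(G^{(2)}_k)_{qp}\approx N\mathcal{S}_{1,k}(z_1,z_2)$.

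The main obstacle is bookkeeping the replacement errors so that none exceeds $O_{L^1}(\sqrt{d_N^3 N^{-1}})$: each substitution $G_{kpj}\to G_k$ and each instance of a stray $W$-factor has to be paired via Cauchy–Schwarz against a sum whose $L^1$ norm we can bound, and there is a genuine danger (as already seen with $\mathcal{S}_{1,5}$) that a naive count gives an extra $d_N$, which must be absorbed by exploiting either $\mathbb{E}_k W_{pj}=0$, $\mathbb{E}|W_{pj}|^{2}=1$, or $\mathbb{E}W_{pj}^{2}=0$. I would organize this as a short sequence of claims, each reducing $\mathcal{S}_{1,2}$ modulo an $O_{L^1}(\sqrt{d_N^3 N^{-1}})$ error, in the order: (i) replace $G^{(2)}_k$ by $G^{(2)}_{kpj}$ in the non-diagonal indices, (ii) discard the $\mathbb{E}_kW_{pj}=0$ part, (iii) replace $|W_{pj}|^2$ by $1$, (iv) replace all perturbed resolvents by unperturbed ones, (v) perform the $j$-summation and concentrate it onto $N^{-1}s(z_2)$ times a Kronecker delta in $p,q$, (vi) recognize the remaining double sum as $N\,\mathcal{S}_{1,k}(z_1,z_2)$ up to an error of the same order. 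The cleanest way to make (v) rigorous is to write $N^{-1}\sum_j (G^{(2)}_k)_{jp}(G^{(2)}_k)_{jq} = N^{-1}(G^{(2)\,2}_k)_{pq}$ exactly (using symmetry of $G_k$), then invoke the standard estimate $N^{-1}(G^{(2)\,2}_k)_{pp} = s'(z_2)/N + s(z_2)^2\cdot(\text{correction})$; but since here we only need the leading $s(z_2)$ structure one can instead use that $(G^{(2)\,2}_k)_{pq} = \partial_{z_2}(G^{(2)}_k)_{pq}$ together with Lemma \ref{le:offdiag} applied to the derivative (equivalently a Cauchy integral as in Remark \ref{rem:cauchy}), which gives $O_{L^1}((d_N/(\eta N))^{1/2})$ off-diagonal and $s(z_2)+O_{L^1}(\cdot)$ on-diagonal, enough to close the argument within the claimed error.
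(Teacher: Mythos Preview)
Your proposal has a genuine gap at step (ii), and this gap is the crux of the lemma. You claim that the ``decoupled piece'' $\tilde{\mathcal{E}}_{1,2}$ --- the sum with $(G^{(2)}_k)_{qp}$ replaced by $(G^{(2)}_{kpj})_{qp}$ --- is small by splitting into $j>k$ (where $\mathbb{E}_k W_{pj}=0$) and $j<k$, handling the latter ``directly by Lemma \ref{le:trivbound}''. But the trivial bound on
\[
N^{-3/2}\sum_{j,p,q<k} W_{pj}\,\mathbb{E}_k(G^{(1)}_{kpj})_{jq}\,\mathbb{E}_k(G^{(2)}_{kpj})_{qp}
\]
gives $N^{-3/2}\cdot N^{3}\cdot (d_N/N)^{1} = N^{1/2}d_N$, which diverges. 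The point is that each summand has zero \emph{mean} (since $W_{pj}$ is independent of $G_{kpj}$), but this does not control the $L^1$ norm of the sum. The paper exploits this cancellation by passing to $L^2$: one expands $\mathbb{E}|\tilde{\mathcal{E}}_{1,2}|^{2}$ as a double sum over $(p_1,j_1,q_1,p_2,j_2,q_2)$, then performs a \emph{second-order} perturbation to make all resolvents independent of both $W_{p_1j_1}$ and $W_{p_2j_2}$; the leading term then vanishes exactly, and only the maximally expanded correction terms survive, each bounded by $d_N^{3}/N$. This $L^2$ expansion is the missing idea in your outline.

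There is also a structural error in your identification of the surviving term. In the perturbation identity applied to $(G^{(2)}_k)_{qp}$, the piece that pairs with $W_{pj}$ to produce $|W_{pj}|^{2}$ carries the factor $(G^{(2)}_{kpj})_{qj}(G^{(2)}_k)_{pp}$ --- a \emph{diagonal} entry $(G^{(2)}_k)_{pp}$ --- not the off-diagonal $(G^{(2)}_{kpj})_{jp}$ you wrote. This is where $s(z_2)$ actually comes from: one replaces $(G^{(2)}_k)_{pp}$ by $s(z_2)$ via Lemma \ref{le:offdiag}, and the remaining sum $\sum_{j,q}(G^{(1)}_{kpj})_{jq}(G^{(2)}_{kpj})_{qj}$ (after unperturbing) is recognized as $N\mathcal{S}_{1,k}$; the free $p$-sum then supplies the prefactor. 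Your step (v), attempting to extract $s(z_2)$ by concentrating $N^{-1}\sum_j (G^{(2)}_k)_{jp}(G^{(1)}_k)_{jq}$ onto a Kronecker delta, is therefore aimed at the wrong object and would not produce the stated leading term.
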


\begin{proof}
We start by replacing $(G^{(2)}_{k})_{qp}$ with $(G^{(2)}_{kpj})_{qp}$ in the definition \eqref{s12} of $\mathcal{S}_{1,2}$ and denote the modified sum by $\tilde{\mathcal{E}}_{1,2}$. We will show that $\tilde{\mathcal{E}}_{1,2}$ converges to $0$ in $L^{2}$. We have
\begin{equation}
\begin{split}
\label{s21error}
\mathbb{E}|\tilde{\mathcal{E}}_{1,2}|^{2} &= N^{-3}\mathbb{E}\sum_{\substack{j_1,j_2,p_1\\p_2,q_1,q_2}}W_{p_1j_1}\overline{W_{p_2j_2}}\mathbb{E}_{k}(G^{(1)}_{kp_1j_1})_{j_1q_1}\mathbb{E}_{k}(G^{(2)}_{kp_1j_1})_{q_1p_1}\\
&\times\overline{\mathbb{E}_{k}(G^{(1)}_{kp_2j_2})_{j_2q_2}}\overline{\mathbb{E}_{k}(G^{(2)}_{kp_2j_2})_{q_2p_2}}
\end{split}
\end{equation}
where here and unless otherwise stated, all indices in the summation run from $1$ to $k-1$. To make $W_{p_1j_1}$ and $G_{kp_2j_2}$ independent we use a similar perturbation formula to remove the matrix element $W_{p_1j_1}$ from $G_{kp_2j_2}$. Therefore, we define $G^{(2),p_1j_1}_{kp_2j_2}$ as the resolvent of the matrix $W$ with the $k^{\mathrm{th}}$ column and row erased and with entries $W_{p_1j_1}, W_{j_1p_1}, W_{p_2j_2}$ and $W_{j_2p_2}$ replaced with $0$. It is easy to show that we have a similar identity
\begin{align}
\label{2ndorderpert}
(G^{(2)}_{kp_2j_2})_{ab} &= (G^{(2),p_1j_1}_{kp_2j_2})_{ab}-c_{p_1j_1}W_{p_1j_1}N^{-1/2}(G^{(2),p_1j_1}_{kp_2j_2})_{ap_1}(G^{(2)}_{kp_2j_2})_{j_1b}\\
&-c_{p_1j_1}W_{j_1p_1}N^{-1/2}(G^{(2),p_1j_1}_{kp_2j_2})_{aj_1}(G^{(2)}_{kp_2j_2})_{p_1b}
\end{align}
Then if we replace the last two factors in \eqref{s21error} with \eqref{2ndorderpert} the main term has identically zero expectation unless both $p_1=j_1$ and $p_2=j_2$, which we assume for the moment does not hold. The higher order terms in \eqref{2ndorderpert} give rise to an error
\begin{align}
A &= N^{-7/2}\mathbb{E}\sum_{\substack{j_1,j_2,p_1\\p_2,q_1,q_2}}W_{p_1j_1}\overline{W_{p_2j_2}}\mathbb{E}_{k}(G^{(1)}_{kp_1j_1})_{j_1q_1}\mathbb{E}_{k}(G^{(2)}_{kp_1j_1})_{q_1p_1}\\
& \times\overline{\mathbb{E}_{k}(G^{(1),p_1j_1}_{kp_2j_2})_{j_2q_2}}\overline{\mathbb{E}_{k}(W_{p_1 j_1}(G^{(2),p_1j_1}_{kp_2j_2})_{q_2j_1}(G^{(2)}_{kp_2j_2})_{p_1p_2})}
\end{align}
and three further error terms which do not differ in any important way from $A$ above. To estimate $A$, we now force independence in $W_{p_2,j_2}$ by replacing $G^{(2)}_{kp_1j_1}$ with $G^{(2),p_2j_2}_{kp_1j_1}$. Again the main term obtained by this replacement has expectation $0$ because $\mathbb{E}\overline{W_{p_2j_2}}=0$. The error terms in \eqref{2ndorderpert} give rise to sums of the form
\begin{align}
A' &= N^{-4}\mathbb{E}\sum_{\substack{j_1,j_2,p_1\\p_2,q_1,q_2}}W_{p_1j_1}\overline{W_{p_2j_2}}\mathbb{E}_{k}(G^{(1),p_2j_2}_{kp_1j_1})_{j_1q_1}\mathbb{E}_{k}(W_{p_2 j_2}(G^{(2),p_2j_2}_{kp_1j_1})_{q_1j_2}(G^{(2)}_{kp_1j_1})_{p_2p_1})\\
&\times\overline{\mathbb{E}_{k}(G^{(1),p_1j_1}_{kp_2j_2})_{j_2q_2}}\overline{\mathbb{E}_{k}(W_{p_1 j_1}(G^{(2),p_1j_1}_{kp_2j_2})_{q_2j_1}(G^{(2)}_{kp_2j_2})_{p_1p_2})}
\end{align}
We call such a term \textit{maximally expanded} because we can no longer exploit independence of the different factors to reduce the size of the sum (none of the factors have zero expectation at this point). See also \cite{EK13max} for related methods. By the prescription \eqref{trivbound}, we have
\begin{equation}
A' \leq cN^{-4}N^{6}(d_{N} N^{-1})^{3} \leq c d_{N}^{3}N^{-1}
\end{equation}
It is clear that all error terms resulting from the replacement \eqref{2ndorderpert} give the same bounds after employing this procedure. 

Now consider the diagonal terms $p_{1}=p_{2}$ and $j_{1}=j_{2}$ contributing to $\tilde{\mathcal{E}}_{1,2}$:
\begin{align}
&N^{-3}\mathbb{E}\sum_{j,p,q_1,q_2}|W_{pj}|^{2}\mathbb{E}_{k}(G^{(1)}_{kpj})_{jq_1}\mathbb{E}_{k}(G^{(1)}_{kpj})_{q_1p}\overline{\mathbb{E}_{k}(G^{(2)}_{kpj})_{jq_2}}\overline{\mathbb{E}_{k}(G^{(2)}_{kpj_2})_{q_2p}}\\
&\leq cN^{-3}N^{4}(d_{N} N^{-1})^{2} \leq cd_{N}^{2}/N
\end{align}
We conclude that $\mathbb{E}|\tilde{\mathcal{E}}_{1,2}|^{2} \leq cd_{N}^{3}/N$. Now again using \eqref{gkijpert} we have
\begin{align}
\mathcal{S}_{1,2} &= N^{-3/2}\sum_{\substack{j < k\\p<k,q<k}}W_{pj}\mathbb{E}_{k}(G^{(1)}_{kpj})_{jq}\mathbb{E}_{k}((G^{(2)}_{k})_{qp}-(G^{(2)}_{kpj})_{qp}) + O_{p}(d_{N}^{3}N^{-1})\\
&=-N^{-2}\sum_{\substack{j < k\\p<k,q<k}}c_{pj}W_{pj}^{2}\mathbb{E}_{k}(G^{(1)}_{kpj})_{jq}\mathbb{E}_{k}((G^{(2)}_{kpj})_{qp}(G^{(2)}_{k})_{jp}) \label{negs12}\\
&-N^{-2}\sum_{\substack{j < k\\p<k,q<k}}c_{pj}|W_{pj}|^{2}\mathbb{E}_{k}(G^{(1)}_{kpj})_{jq}\mathbb{E}_{k}((G^{(2)}_{kpj})_{qj}(G^{(2)}_{k})_{pp})+O_{p}(d_{N}^{3}N^{-1}) \label{finals12}
\end{align}
By \eqref{trivbound}, we see that \eqref{negs12} is $O_{L^1}(\sqrt{d_{N}^{3}N^{-1}})$. The final term \eqref{finals12} (let us denote it $\tilde{\mathcal{S}}_{1,2}$) can be written
\begin{align}
\tilde{\mathcal{S}}_{1,2} =& -N^{-2}s(z_{2})\sum_{j,p,q<k}c_{pj}\mathbb{E}_{k}(G^{(1)}_{kpj})_{jq}\mathbb{E}_{k}(G^{(2)}_{kpj})_{qj} \label{s12dom}\\
&-N^{-2}s(z_{2})\sum_{j,p<k}(|W_{pj}|^{2}-1)c_{pj}\sum_{q<k}\mathbb{E}_{k}(G^{(1)}_{kpj})_{jq}\mathbb{E}_{k}(G^{(2)}_{k})_{qj} \label{s12midneg}\\
&-N^{-2}\sum_{j,p,q<k}c_{pj}|W_{pj}|^{2}\mathbb{E}_{k}(G^{(1)}_{kpj})_{jq}\mathbb{E}_{k}(G^{(2)})_{qj}((G^{(2)}_{k})_{pp}-s(z_{2})) \label{s12finneg}
\end{align}
To estimate the term \eqref{s12midneg}, first replace $G^{(2)}_{k}$ with $G^{(2)}_{kpj}$ and note that by \eqref{trivbound} the error in making this replacement is $O_{p}(d_{N}N^{-1/2})$. We denote the remaining sum $\tilde{\mathcal{F}}_{1,2}$. We have
\begin{align}
\mathbb{E}|\tilde{\mathcal{F}}_{1,2}|^{2} &= N^{-4}|s(z_{2})|^{2}\sum_{\substack{j_1,p_1,q_1\\j_2,p_2,q_2}}(|W_{p_1j_1}|^{2}-1)(|W_{p_2j_2}|^{2}-1)\mathbb{E}_{k}(G^{(1)}_{kp_1j_1})_{j_1q_1}\mathbb{E}_{k}(G^{(2)}_{kp_1j_1})_{q_1j_1}\\
&\times \overline{\mathbb{E}_{k}(G^{(1)}_{kp_2j_2})_{j_2q_2}\mathbb{E}_{k}(G^{(2)}_{kp_2j_2})_{q_2j_2}}
\end{align}
This sum has a similar structure to that seen in the computation of $\mathbb{E}|\tilde{\mathcal{E}}_{1,2}|^{2}$. Applying exactly the same procedure shows that $\mathbb{E}|\tilde{\mathcal{F}}_{1,2}|^{2} \leq cd_{N}^{3}/N^{2}$. The term \eqref{s12finneg} is $O_{L^1}(\sqrt{d_{N}^{3}N^{-1}})$ as follows directly from the generic bound \eqref{trivbound}. In \eqref{s12dom} all terms in the sum where $p=j$ will only contribute $O_{L^1}(d_{N} N^{-1})$ so can be neglected, which justifies us setting $c_{pj}=1$ in \eqref{s12dom}. 

Finally, note that \eqref{s12dom} has a similar form to $\mathcal{S}_{1,k}$ except with the perturbed resolvent elements $(G^{(1)}_{kpj})_{jq}$. By the trivial bound \eqref{trivbound} they can be exchanged with the original ones $(G^{(1)}_{k})_{jq}$ at a cost $O_{L^1}((d_{N}N^{-1}))$. Then the summation over $p$ just gives a prefactor $(N-k)$, leading to \eqref{s12est}.
\end{proof}

\subsection{Proof of Proposition \ref{prop:findim}}
\label{se:conv}
With the most significant challenges dealt with in the previous subsection, our aim now is to complete the proof of Proposition \ref{prop:findim} by solving relation \eqref{recs1} and computing the limit $N \to \infty$. In other words, we finally prove the conditional variance formula \eqref{covarconv} which is enough to verify Proposition \ref{prop:findim}.

\begin{proposition}
\label{prop:cn}
Consider the approximate covariance kernel $\tilde{C}_{N}(z_{1},z_{2})$ given by \eqref{apcoker}. Then, in terms of the notation \eqref{zn}, we have the estimate
\begin{equation}
\tilde{C}_{N}(z_{1},\overline{z_{2}}) = \frac{1}{(i(\tau_1-\tau_2+i(\eta_1+\eta_2)))^{2}} + O_{L^1}(1/d_{N})+O_{L^1}\left(\log(d_{N})\sqrt{d_{N}^{3}/N}\right)
\label{cnlimitprop}
\end{equation}
and
\begin{equation}
\tilde{C}_{N}(z_{1},z_{2}) = O_{L^1}(1/d_{N})+O_{L^1}\left(\log(d_{N})\sqrt{d_{N}^{3}/N}\right) \label{zerolim}
\end{equation}
\end{proposition}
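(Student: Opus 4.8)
The plan is to solve the approximate self-consistent equation \eqref{recs1} of Proposition~\ref{prop:s1} for $\mathcal{S}_{1,k}(z_1,z_2)$ in closed form, average over $k$, and then apply $\frac{1}{d_N^2}\partial_{z_1}\partial_{z_2}$ via the Cauchy representation of Remark~\ref{rem:cauchy}. First I would collect the earlier reductions. By the decomposition \eqref{s1k}--\eqref{s2} and the definition \eqref{s1} of $\mathcal{S}_{1,k}$ one has $K_N(z_1,z_2)=\frac1N\sum_{k=1}^N\mathbb{E}_{k-1}[\mathcal{S}_{1,k}(z_1,z_2)]$ plus the fourth-moment term, whose contribution to $\tilde{C}_N$ was already shown around \eqref{stat_4th_mom_est} to be $O_{L^1}(1/d_N^2)+O_{L^1}(\sqrt{d_N/N})$, hence (since $\gamma<1/3$) inside the errors claimed in \eqref{cnlimitprop}--\eqref{zerolim}. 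By Remark~\ref{rem:cauchy} it therefore suffices to control $\frac1N\sum_k\mathbb{E}_{k-1}[\mathcal{S}_{1,k}]$ in $L^1$, uniformly over the contours $\mathcal{S}_{z_1},\mathcal{S}_{z_2}$.

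Next, write $\kappa_k:=(N-k)/N$ and $a:=s(z_1)s(z_2)$. Using the semicircle identity $z+s(z)=-1/s(z)$, \eqref{recs1} rearranges to $(1-\kappa_k a)\,\mathcal{S}_{1,k}(z_1,z_2)=\kappa_k a+O_{L^1}((d_N^3/N)^{1/2})$, uniformly in $k$. The crucial deterministic input is a sharp lower bound $|1-\kappa_k a|\geq c\max(k/N,\,d_N^{-1})$, valid for $N$ large and $E\in(-2+\delta,2-\delta)$. For \eqref{zerolim} this is immediate: $a\to s(E)^2\notin[1,\infty)$, so $|1-\kappa_k a|\geq c>0$ uniformly. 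For \eqref{cnlimitprop} one sets $z_2=\overline{z_2}$, writes $s(E)=e^{i\theta}$ with $\theta\in(0,\pi)$ and $\sin\theta=\tfrac12\sqrt{4-E^2}$, and expands, using $s'(E)\overline{s(E)}=\tfrac{i}{2\sin\theta}$, that $a=s(z_1)s(\overline{z_2})=1+\tfrac{i}{2d_N\sin\theta}\bigl((\tau_1-\tau_2)+i(\eta_1+\eta_2)\bigr)+O(d_N^{-2})$; then $\re(1-\kappa_k a)=\tfrac{k}{N}+\kappa_k\tfrac{\eta_1+\eta_2}{2d_N\sin\theta}+O(d_N^{-2})$ is a sum of non-negative terms, which yields the bound. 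Dividing by $1-\kappa_k a$, averaging, and estimating the remainder through $\frac1N\sum_k|1-\kappa_k a|^{-1}\leq C\log d_N$ (exactly where the factor $\log d_N$ in the statement is produced) gives $\frac1N\sum_k\mathbb{E}_{k-1}[\mathcal{S}_{1,k}]=\frac1N\sum_{k}\frac{\kappa_k a}{1-\kappa_k a}+O_{L^1}\!\bigl(\log(d_N)\sqrt{d_N^3/N}\bigr)$.

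It then remains to evaluate the deterministic sum and differentiate. Partial fractions give $\frac1N\sum_{k=1}^N\frac{\kappa_k a}{1-\kappa_k a}=\sum_{j=0}^{N-1}\frac{1}{N-ja}-1=\frac{1}{1-\delta}\bigl(\psi(N+1+\sigma)-\psi(1+\sigma)\bigr)-1$ with $\delta:=1-a$, $\sigma:=N\delta/(1-\delta)$, and standard digamma asymptotics then yield $\frac1N\sum_k\frac{\kappa_k a}{1-\kappa_k a}=-1-\log(1-a)+O(d_N^{-1}\log d_N)$ (in the regime of \eqref{zerolim}, where $a$ stays bounded away from $1$, the cleaner statement is that the Riemann sum converges to $\int_0^1\frac{ta}{1-ta}\,dt=-1-\frac1a\log(1-a)$). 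For \eqref{zerolim} it follows that $s(\omega_1)s(\omega_2)K_N(\omega_1,\omega_2)$ coincides, uniformly on the contours and up to an $L^1$-error of the size already accounted for, with the $N$-independent bounded analytic function $-s(\omega_1)s(\omega_2)-\log(1-s(\omega_1)s(\omega_2))$ (whose branch cut $[1,\infty)$ the contours avoid); applying $\frac{1}{d_N^2}\partial_{z_1}\partial_{z_2}$ as in Remark~\ref{rem:cauchy} then costs $O(d_N^{-2})$, which proves \eqref{zerolim}. For \eqref{cnlimitprop} one uses $\frac{1}{d_N^2}\partial_{z_1}\partial_{z_2}=\partial_{\tau_1}\partial_{\tau_2}$ together with the expansion $s(z_1)s(\overline{z_2})K_N=\log d_N-1-\log\!\bigl(\tfrac{-i\Delta}{2\sin\theta}\bigr)+O(d_N^{-1}\log d_N)$, $\Delta:=(\tau_1-\tau_2)+i(\eta_1+\eta_2)$: the $\log d_N$ and constant pieces are killed by $\partial_{\tau_1}\partial_{\tau_2}$, the $O(d_N^{-1}\log d_N)$ remainder has $\tau$-derivatives of order $O(1/d_N)$ (since $\im\Delta=\eta_1+\eta_2$ is bounded away from $0$), and $\partial_{\tau_1}\partial_{\tau_2}(-\log\Delta)=-\Delta^{-2}=(i\Delta)^{-2}$, which is precisely the limit in \eqref{cnlimitprop}.

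The step I expect to be the main obstacle is making the passage through the near-singular denominator $1-\kappa_k a$ rigorous in the conjugated case: establishing the sharp uniform lower bound $|1-\kappa_k a|\geq c\max(k/N,d_N^{-1})$ (and the consequent loss of $\log d_N$ in the error), and then propagating the digamma/Riemann-sum asymptotics for $s(\omega_1)s(\omega_2)\tilde{K}_N(\omega_1,\omega_2)$ uniformly over the radius-$O(1/d_N)$ Cauchy contours, with the $O(d_N^{-1}\log d_N)$ remainders controlled precisely enough to survive the two derivatives $\partial_{\tau_1}\partial_{\tau_2}$. The point that must be verified is that, although $s(z_1)s(\overline{z_2})\tilde{K}_N$ itself diverges like $\log d_N$, only its bounded $\tau$-dependent part $-\log\Delta$ contributes to $\tilde{C}_N(z_1,\overline{z_2})$, thereby reproducing the covariance of the limiting Gaussian field.
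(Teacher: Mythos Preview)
Your proposal is correct and follows essentially the same strategy as the paper: solve the self-consistent relation \eqref{recs1} for $\mathcal{S}_{1,k}$, average over $k$, and apply $\frac{1}{d_N^2}\partial_{z_1}\partial_{z_2}$ via the Cauchy representation of Remark~\ref{rem:cauchy}. Your identification of the key obstacle---the near-singular denominator $1-\kappa_k a$ in the conjugated case, the resulting lower bound $|1-\kappa_k a|\ge c\max(k/N,d_N^{-1})$, and the appearance of the $\log d_N$ factor---matches exactly what the paper singles out.

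The one genuine (though minor) difference is in how the deterministic sum $\frac{1}{N}\sum_k\frac{\kappa_k a}{1-\kappa_k a}$ is handled. The paper approximates it by the Riemann integral $\int_0^1\frac{ua}{1-ua}\,du=-1-\frac{1}{a}\log(1-a)$, controlling the discretisation error via the total-variation norm of $u\mapsto(1-ua)^{-1}$, and then computes the exact derivative $\frac{1}{d_N^2}\partial_{z_1}\partial_{z_2}\bigl[-a-\log(1-a)\bigr]=\frac{1}{d_N^2}\frac{s'(z_1)s'(\overline{z_2})}{(1-a)^2}-\frac{1}{d_N^2}s'(z_1)s'(\overline{z_2})$ \emph{before} inserting the expansion \eqref{singasympt}. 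You instead evaluate the sum exactly by the digamma identity and expand $aK_N$ in $\Delta=(\tau_1-\tau_2)+i(\eta_1+\eta_2)$ \emph{first}, then differentiate in $\tau_1,\tau_2$. Both routes are valid; the paper's ordering has the small advantage that it avoids having to argue that the $O(d_N^{-1}\log d_N)$ remainder survives two $\tau$-derivatives (Remark~\ref{rem:cauchy} handles this automatically when the error is uniform over the contours), while your digamma computation gives a slightly more explicit handle on the Riemann-sum error. Note that keeping the $\tfrac{1}{a}$ in $-1-\tfrac{1}{a}\log(1-a)$ (rather than absorbing it into the remainder) and differentiating as the paper does yields the cleaner error $O(1/d_N)$ stated in \eqref{cnlimitprop}, rather than your $O(d_N^{-1}\log d_N)$.
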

\begin{proof}
Recall that $\tilde{C}_{N}(z_{1},z_{2})$ can be expressed in terms of the auxiliary kernel $K_{N}(z_{1},\overline{z_{2}})$ in \eqref{s1k}:
\begin{equation}
\tilde{C}_{N}(z_{1},z_{2}) = \frac{1}{d_{N}^{2}}\frac{\partial^{2}}{\partial z_{1} \partial z_{2}}s(z_{1})s(z_{2})K_{N}(z_{1},z_{2})
\end{equation}
In turn, $K_{N}(z_{1},z_{2})$ was expressed in terms of the sum $\mathcal{S}_{1,k}(z_{1},z_{2})$ defined in \eqref{s1} and computed in Proposition \ref{prop:s1}. Let us denote the error term in that Proposition by $E_{N,k}$. Then we have
\begin{equation}
\mathcal{S}_{1,k}(z_{1},z_{2}) = \frac{N-k}{N}\frac{s(z_{1})s(z_{2})}{1-\frac{N-k}{N}s(z_{1})s(z_{2})}+\frac{E_{N,k}}{1-\frac{N-k}{N}s(z_{1})s(z_{2})} 
\end{equation}
where $E_{N,k} = O_{L^1}(\sqrt{d_{N}^{3}/N})$ uniformly in $k$. Then by definition \eqref{s1} the kernel $K_{N}(z_{1},z_{2})$ can be written as
\begin{equation}
K_{N}(z_{1},z_{2}) = \frac{1}{N}\sum_{k=1}^{N}\frac{N-k}{N}\frac{s(z_{1})s(z_{2})}{1-\frac{N-k}{N}s(z_{1})s(z_{2})}+\frac{1}{N}\sum_{k=1}^{N}\frac{E_{N,k}}{1-\frac{N-k}{N}s(z_{1})s(z_{2})} + O_{L^1}(d_{N}/N) \label{cnasympt}
\end{equation}
These sums are close to Riemann integrals, but before we calculate them we must take into account a subtle feature of the mesoscopic regime: when the points $z_{1}$ and $z_{2}$ have opposing signs in their imaginary parts, there is a singularity in the denominators of \eqref{cnasympt}, due to the asymptotic formula
\begin{equation}
1-s(z_{1})s(\overline{z_{2}}) = \frac{1}{d_{N}}\frac{\eta_1+\eta_2+(\tau_2-\tau_1)i}{\sqrt{4-E^{2}}}+O(d_{N}^{-2})\label{singasympt}
\end{equation}
If the signs of the imaginary part are the same, there is no singularity and one finds that the limiting covariance is identically zero, as in \eqref{zerolim}. Now let us control the errors in \eqref{cnasympt}, assuming there is a singularity. For $0 < u < 1$, let $\psi(u) := |1-us(z_{1})s(\overline{z_{2}})|^{-1}$. Then standard results about the Riemann integral show that the error term in \eqref{cnasympt} is bounded in $L^1$ by
\begin{equation}
\sqrt{d_{N}^{3}/N}\frac{1}{N}\sum_{k=1}^{N}\frac{1}{|1-\frac{N-k}{N}s(z_{1})s(\overline{z_{2}})|}\leq \sqrt{d_{N}^{3}/N}\left(\int_{0}^{1}\psi(u)\,du + |E_{N,k}|\frac{\norm{\psi}_{\mathrm{TV}}}{N}\right)
\end{equation}
where $\norm{\psi}_{\mathrm{TV}}$ is the total variational norm of the function $\psi$. Since $us(z_{1})s(z_{2})$ has non-zero imaginary part, $\psi'(u)$ exists and is Riemann integrable. Then the total variational norm can be written
\begin{equation}
\norm{\psi}_{\mathrm{TV}} = \int_{0}^{1}|\psi'(u)|\,du
\end{equation}
and a simple calculation taking into account the asymptotics \eqref{singasympt} shows that $\norm{\psi}_{\mathrm{TV}} = O(d_{N})$ as $N \to \infty$, while $\int_{0}^{1}\psi(u)\,du = O(\log(d_{N}))$ as $N \to \infty$.

The error from approximating the first term in \eqref{cnasympt} can be estimated in the same way, and we get 
\begin{equation}
\label{intlog}
\begin{split}
K_{N}(z_{1},\overline{z_{2}}) &= \int_{0}^{1}\frac{us(z_{1})s(\overline{z_{2}})}{1-us(z_{1})s(\overline{z_{2}})}\,du+O_{L^1}\left(\log(d_{N})\sqrt{d_{N}^{3}/N}\right)\\
&= -1-\frac{\log(1-s(z_{1})s(\overline{z_{2}}))}{s(z_{1})s(\overline{z_{2}})}+O_{L^1}\left(\log(d_{N})\sqrt{d_{N}^{3}/N}\right)
\end{split}
\end{equation}
Inserting \eqref{intlog} into the definition of $\tilde{C}_{N}(z_{1},\overline{z_{2}})$ and bearing in mind Remark \ref{rem:cauchy}, we have
\begin{align}
\tilde{C}_{N}(z_{1},\overline{z_{2}}) &:= \frac{1}{d_{N}^{2}}\frac{\partial^{2}}{\partial z_{1} \partial z_{2}}s(z_{1})s(\overline{z_{2}})K_{N}(z_{1},\overline{z_{2}})\\
&= \frac{1}{d_{N}^{2}}\frac{s'(z_{1})s'(\overline{z_{2}})}{(1-s(z_{1})s(\overline{z_{2}}))^{2}}-\frac{1}{d_{N}^{2}}s'(z_{1})s'(z_{2})+O_{L^1}\left(\log(d_{N})\sqrt{d_{N}^{3}/N}\right)\\
&= \frac{1}{(i(\tau_1-\tau_2+i(\eta_1+\eta_2)))^{2}}+O(1/d_{N})+O_{L^1}\left(\log(d_{N})\sqrt{d_{N}^{3}/N}\right)
\end{align}
where in the last line we applied formula \eqref{singasympt} and used that
\begin{equation}
s'(z_{1})s'(\overline{z_{2}}) = \frac{1}{4-E^{2}}+O(d_{N}^{-1}).
\end{equation}
This completes the proof of Proposition \ref{prop:cn}. Therefore we have finally verified the conditions \eqref{covarconv} and \eqref{covarconv2}, hence completing the proof of Proposition \ref{prop:findim}.
\end{proof}

\section{Tightness and linear statistics}
\label{se:testfn}
The goal of this section is to extend the CLT obtained in the previous section to a wider range of test functions, focusing on the distributional convergence of \eqref{lsmeso}. Our approach will be to combine the Helffer-Sj\"ostrand formula \eqref{genint} with the main result of the last Section, telling us that the finite-dimensional distributions of $V_{N}$ converge to $\Gamma'^{+}_{0}$. If we can interchange the integrals in \eqref{genint} with this distributional convergence then the CLT for mesoscopic linear statistics \eqref{lsmeso} would be proved. 

In practise we will need to prove a certain tightness criteria in order to justify this interchange, involving uniform estimates on the second moment $\mathbb{E}|V_{N}(\tau+i\eta)|^{2}$. We will handle those estimates mainly with Theorem \ref{thm:optimalbound}, but this Theorem only applies in the bulk region with $\eta \geq d_{N}/N$. For smaller $\eta$ we need the following variance bound, due to Sosoe and Wong (in our notation).
\begin{proposition}
\label{prop:sw}
	Let $\epsilon > 0$. Then for $0 < \eta < 1$, and $|E + \tau/d_N|\leq 5$ we have that there is a universal constant $C > 0$ such that:
\begin{equation}
	\mathbb{E}|V_N(\tau+i\eta)|^2 \leq C d_N^\epsilon \eta^{-2-\epsilon}, \label{swbound}
\end{equation}
\end{proposition}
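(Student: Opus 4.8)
The plan is to reduce \eqref{swbound} to the resolvent variance estimate of Sosoe and Wong \cite{SW13} by the mesoscopic change of variables. Write $w = E + (\tau+i\eta)/d_N$, so that $\im(w) = \eta/d_N$; by the definition of $V_N$ one has $\mathbb{E}|V_N(\tau+i\eta)|^2 = d_N^{-2}\var(\tr G(w))$. It therefore suffices to invoke the fact that for every $\epsilon>0$ there is a constant $C_\epsilon$, uniform in $N$, in $\re z$ over bounded sets (in particular over $|\re w| = |E+\tau/d_N|\le 5$), and in $\im z \in (0,1]$, such that $\var(\tr G(z)) \le C_\epsilon(\im z)^{-2-\epsilon}$. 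Substituting $\im w = \eta/d_N$ and dividing by $d_N^2$ turns this precisely into \eqref{swbound}; the factor $d_N^\epsilon$ is exactly the image of the $(\im z)^{-\epsilon}$ loss in the Sosoe--Wong bound under the rescaling, which is why it cannot be removed at this level of generality. One could alternatively obtain a sufficient (though non-optimal in $\im z$) bound for the range $\im w \ge N^{-1+\delta}$ directly from Lemma \ref{le:offdiag}; the role of \cite{SW13} is precisely to reach all smaller scales as well.

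To prove the underlying variance bound I would follow \cite{SW13}. One starts from the martingale decomposition $\tr G(z) - \mathbb{E}\tr G(z) = \sum_{k=1}^N (\mathbb{E}_k - \mathbb{E}_{k-1})\tr G(z)$ already used in Section \ref{se:res}, together with $(\mathbb{E}_k - \mathbb{E}_{k-1})\tr G(z) = (\mathbb{E}_k - \mathbb{E}_{k-1})(\tr G(z) - \tr G_k(z))$ and the Schur complement formula for $\tr G - \tr G_k$. Two deterministic facts make the estimate viable at all scales: (i) since $\mathcal{H}_{kk}$ is real and $\im(h_k^\dagger G_k h_k) = \im(z)\,\norm{G_k h_k}^2 \ge 0$, the Schur denominator $\mathcal{H}_{kk} - z - h_k^\dagger G_k h_k$ has modulus at least $\im(z)$; and (ii) $\norm{G_k(z)}_F^2 = \im(z)^{-1}\im\tr G_k(z)$, where $\mathbb{E}[\im\tr G_k(z)] = O(N)$ uniformly in $\im z \in (0,1]$ because the mean density of states is $O(N)$ on the line. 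The martingale increment retains only the fluctuations of the quadratic forms $h_k^\dagger G_k h_k$ and $h_k^\dagger G_k^2 h_k$ about their conditional means $N^{-1}\tr G_k$ and $N^{-1}\tr G_k^2$ (and of $\mathcal{H}_{kk}$ about $0$); by Hanson--Wright-type concentration these are of order $N^{-1}\norm{G_k}_F$, respectively $N^{-1}\norm{G_k^2}_F$, and one assembles from this a bound on $\sum_k \mathbb{E}|(\mathbb{E}_k - \mathbb{E}_{k-1})\tr G|^2$.

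The main obstacle is the regime $\im z \ll N^{-1}$, below the typical eigenvalue spacing, which is exactly where the local law (Theorem \ref{thm:optimalbound} and Lemma \ref{le:offdiag}) is unavailable: there one cannot replace $N^{-1}\tr G_k$ by $s(z)$, and inserting the only unconditional bound $\norm{G_k} \le \im(z)^{-1}$ into the martingale estimate loses far too many powers of $\im z$. The device of \cite{SW13} is a bootstrap over dyadic scales $\im z = 2^{-m}$: the bound is first established at a macroscopic reference scale $\im z$ of order $1$, where $\var(\tr G)$ is $O(1)$ by the classical CLT, and then propagated down to smaller scales, the increment between consecutive scales being controlled through resolvent identities and the deterministic norm bound, with the accumulated loss over the $O(\log(1/\im z))$ steps absorbed into the $(\im z)^{-\epsilon}$ factor. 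Carrying out this iteration tightly enough that only an $\epsilon$-power is sacrificed, and keeping all constants uniform in $\re z$ up to and slightly past the spectral edge, is the technical core of \cite{SW13}, to which I refer for the details.
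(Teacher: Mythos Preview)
Your reduction is correct and matches the paper's approach exactly: the paper's own proof is the single line ``See Proposition 4.1 in \cite{SW13}'', and you have correctly spelled out the change of variables $w=E+(\tau+i\eta)/d_N$ that turns the Sosoe--Wong bound $\var(\tr G(z))\le C_\epsilon(\im z)^{-2-\epsilon}$ into \eqref{swbound}, including the observation that the $d_N^\epsilon$ factor is precisely the rescaled $(\im z)^{-\epsilon}$ loss. Your supplementary sketch of the mechanism behind \cite{SW13} (martingale decomposition, Schur complement, Hanson--Wright, and the need to reach scales below $N^{-1}$) is reasonable context, though the paper itself does not attempt to reproduce that argument and simply cites the reference.
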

\begin{proof}
See Proposition 4.1 in \cite{SW13}.
\end{proof}

The reader should compare this result with that obtained from Theorem \ref{thm:optimalbound} which gives an improved bound $\mathbb{E}|V_N(t+i\eta)|^2 \leq C\eta^{-2}$ (Proposition \ref{prop:bulkBound}) but in a more restricted region.

To prove the CLT for \eqref{lsmeso}, we consider two situations. Firstly we consider the case that $f \in C^{1,\alpha}_{c}(\mathbb{R})$, the H\"older space of compactly supported functions $f$ such that $f'$ is H\"older continuous with exponent $\alpha>0$. The hypothesis of compact support will allow us to avoid complications coming from the edges of the spectrum and will serve as a warm up, illustrating our general approach. 

In the last subsection we consider the more challenging case where the hypothesis of compact support is replaced with a more general decay condition $f(x)$ and $f'(x)$. In particular we will prove our main result, Theorem \ref{th:maintheorem}. 

\subsection{Compactly supported functions with H\"older continuous first derivative}
To obtain the CLT we will apply the Helffer-Sj\"ostrand formula \eqref{genint} with
\begin{equation}
\Psi_{f}(t,\eta) = (f(t)+i(f(t+\eta)-f(t)))J(\eta)
\end{equation}
where $J(\eta)$ is a smooth function of compact support, equal to $1$ in a neighbourhood of $\eta=0$ and equal to $0$ if $\eta>1$, see Lemma \ref{lem:extension}. It follows that
\begin{equation}
\overline{\partial} \Psi_{f}(t,\eta) = (f'(t)-f'(t+\eta)+i(f'(t+\eta)-f'(t)))J(\eta)+(if(t)-(f(t+\eta)-f(t)))J'(\eta)
\end{equation}
and if we assume that $f'$ is H\"older continuous with exponent $0 < \alpha < 1$ then we also have the bound $|\overline{\partial} \Psi_{f}(t,\eta)| \leq c\eta^{\alpha}$ for some constant $c>0$ for $\eta$ small. Also $\Psi_{f}(t,\eta)$ satisfies $\Psi_{f}(t,0)=f(t)$, and that $\overline{\partial}\Psi_{f}(t,\eta)$ is compactly supported whenever $f$ is.


\begin{theorem}
\label{thm:c2c}
	Suppose that $d_{N}$ satisfies the condition $1 \ll d_{N} \ll N^{1/3}$ and consider compactly supported test functions $f_{1},\ldots,f_{M}$ whose first derivatives are H\"older continuous for some exponent $\alpha>0$. Then for any fixed $E \in (-2,2)$ in \eqref{lsmeso} we have the convergence in distribution 
\begin{equation}
	(\tilde{X}^{\mathrm{meso}}_{N}(f_{1}),\ldots, \tilde{X}^{\mathrm{meso}}_{N}(f_{M})) \to (X_{1},\ldots,X_{M})
\end{equation}
where $(X_{1},\ldots,X_{M})$ is an $M$-dimensional Gaussian vector with covariance matrix
\begin{equation}
 \mathbb{E}(X_{p}X_{q}) = \frac{1}{2\pi}\int_{-\infty}^{\infty}|k|\,\hat{f_{p}}(k)\overline{\hat{f_{q}}(k)}\,dk, \qquad 1 \leq p,q \leq M \label{covarc2c}
\end{equation}
\end{theorem}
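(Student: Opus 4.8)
The plan is to derive Theorem \ref{thm:c2c} from the finite-dimensional convergence of $V_N$ (Proposition \ref{prop:findim}, equivalently Theorem \ref{th:vn}) by passing the distributional limit through the Helffer--Sj\"ostrand representation \eqref{genint}. By the Cram\'er--Wold device it suffices to treat a single linear combination $\sum_p a_p \tilde X^{\mathrm{meso}}_N(f_p)$, which by linearity of \eqref{genint} equals $\frac{1}{\pi}\re\iint V_N(\tau+i\eta)\,\overline{\partial}\Psi_g(\tau,\eta)\,d\tau\,d\eta$ for $g=\sum_p a_p f_p$; so one reduces to showing $\frac{1}{\pi}\re\iint V_N(\tau+i\eta)\overline{\partial}\Psi_g\,d\tau\,d\eta \Rightarrow \frac{1}{\pi}\re\iint \Gamma'^{+}_0(\tau+i\eta)\overline{\partial}\Psi_g\,d\tau\,d\eta$, and then identifying the latter Gaussian with variance \eqref{covarc2c}. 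The domain of integration is effectively compact: $\overline{\partial}\Psi_g$ is supported in $\{|\tau|\le R\}\times\{0\le\eta\le 1\}$ for some $R$ since $g$ has compact support.

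The core analytic step is a truncation-and-approximation argument. First I would fix $\delta>0$ and split the $\eta$-integral at $\eta=\delta$. On $\{\eta\ge\delta\}$ the integrand $V_N(\tau+i\eta)$ is a continuous functional of finitely many values in the sense that, using the bound $\mathbb{E}|V_N(\tau+i\eta)|^2\le C\eta^{-2}$ from Proposition \ref{prop:bulkBound} (valid in the bulk for $\eta\ge d_N/N$, hence for $\eta\ge\delta$ once $N$ is large) together with the tightness/equicontinuity of $V_N$ on the compact rectangle $[-R,R]\times[\delta,1]$ (Corollary \ref{cor:arzasc}, via Theorem \ref{thm:optimalbound}), the map $V_N\mapsto \iint_{\eta\ge\delta}V_N\overline{\partial}\Psi_g$ is a bounded continuous functional on $\mathcal{U}(D)$; so by Theorem \ref{th:vn} this piece converges in distribution to the corresponding integral of $\Gamma'^{+}_0$. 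Second, the contribution of the strip $\{0<\eta<\delta\}$ must be shown to be uniformly small: here I use $|\overline{\partial}\Psi_g(\tau,\eta)|\le c\eta^\alpha$ and the Sosoe--Wong bound \eqref{swbound}, $\mathbb{E}|V_N(\tau+i\eta)|^2\le Cd_N^\epsilon\eta^{-2-\epsilon}$, to get
\begin{equation}
\mathbb{E}\Bigl|\iint_{0<\eta<\delta}V_N\,\overline{\partial}\Psi_g\,d\tau\,d\eta\Bigr|
\le c\,2R\int_0^\delta \eta^{\alpha}\,(C d_N^\epsilon)^{1/2}\eta^{-1-\epsilon/2}\,d\eta
\le C' d_N^{\epsilon/2}\,\delta^{\alpha-\epsilon/2},
\end{equation}
which, choosing $\epsilon<2\alpha$, tends to $0$ as $\delta\to0$ uniformly in $N$ provided $d_N^{\epsilon/2}\delta^{\alpha-\epsilon/2}\to0$; since $d_N=o(N^{1/3})$ one can in fact take $\epsilon$ as small as desired, so the strip contribution is $O(\delta^{\alpha/2})$ uniformly in $N$. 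The same estimate bounds the strip contribution of the limit process $\Gamma'^{+}_0$ (whose second moment on $\eta<\delta$ is $O(\eta^{-2})$ from \eqref{cogamma}), actually even more easily. A standard three-epsilon argument (Theorem 3.2 in Billingsley, say) then upgrades the convergence of the truncated integrals to convergence of the full integrals.

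It remains to identify the limiting Gaussian and compute its covariance. Since $\Gamma'^{+}_0$ is a centered Gaussian process, the integral $\frac1\pi\re\iint\Gamma'^{+}_0(\tau+i\eta)\overline{\partial}\Psi_{f_p}\,d\tau\,d\eta$ is centered Gaussian, and its covariance with the $f_q$-version is obtained by inserting the explicit covariance kernel $\mathbb{E}(\Gamma'^{+}_0(z_1)\overline{\Gamma'^{+}_0(z_2)})=(i(z_1-\overline{z_2}))^{-2}$, $\mathbb{E}(\Gamma'^{+}_0(z_1)\Gamma'^{+}_0(z_2))=0$, and evaluating the resulting deterministic quadruple integral. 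This is where I would invoke the identity \eqref{genint} \emph{backwards}: the same Helffer--Sj\"ostrand manipulation that produced \eqref{genint} shows, when applied to the deterministic kernel, that $\frac1\pi\re\iint (i(z_1-\overline{z_2}))^{-2}\,\overline{\partial}\Psi_{f_p}(z_1)\overline{\overline{\partial}\Psi_{f_q}(z_2)}\,\cdots$ collapses to a one-dimensional spectral integral; concretely, writing $(i(z_1-\overline z_2))^{-2}=-\partial_{z_1}\partial_{\overline z_2}\log(i(z_1-\overline z_2))$ and integrating by parts reduces it to $\frac1{2\pi}\int_{\mathbb R}|k|\hat f_p(k)\overline{\hat f_q(k)}\,dk$, i.e. the $H^{1/2}$ inner product \eqref{covarc2c}. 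An alternative, perhaps cleaner, route is to bypass the kernel computation entirely: for test functions of the special form $f(x)=\re\{(x-z_0)^{-1}\}$ and their linear combinations the covariance \eqref{covarc2c} can be checked directly against \eqref{cogamma}, and then a density argument in $C^{1,\alpha}_c$ (using the same strip estimate to control the approximation error, since resolvents are dense in the relevant $H^{1/2}$-type sense) extends it to general $f$. I expect the main obstacle to be the first route's bookkeeping for the deterministic integral identity — ensuring the boundary terms from integration by parts vanish given only $f\in C^{1,\alpha}_c$ — so in practice I would lean on the density argument and reserve the direct computation for the resolvent case already handled in Section \ref{se:res}.
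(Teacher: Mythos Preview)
Your strip estimate has a genuine gap. The bound you derive for the contribution of $\{0<\eta<\delta\}$, namely $C' d_N^{\epsilon/2}\delta^{\alpha-\epsilon/2}$, is \emph{not} uniform in $N$ for fixed $\delta$: the factor $d_N^{\epsilon/2}=N^{\gamma\epsilon/2}$ diverges as $N\to\infty$ for every $\epsilon>0$, however small. Consequently $\limsup_N \mathbb{P}(|\text{strip}|>\epsilon')$ is not controlled and the three-epsilon argument does not close. Taking $\epsilon$ small cannot help here, because the Sosoe--Wong bound \eqref{swbound} genuinely loses a power of $d_N$; that loss is harmless only when the $\eta$-range of integration itself supplies a compensating negative power of $N$.

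The paper avoids this by truncating not at a fixed $\delta$ but at the $N$-dependent level $\eta=d_N/N$. On $\{0<\eta<d_N/N\}$ the Sosoe--Wong bound yields a contribution of order $d_N^{\epsilon}(d_N/N)^{\alpha-\epsilon}$, which \emph{does} vanish as $N\to\infty$ once $\epsilon<(1-\gamma)\alpha$. On the remaining region $\{d_N/N\le\eta\le 1\}$ the sharper bulk bound of Proposition~\ref{prop:bulkBound}, $\mathbb E|V_N(\tau+i\eta)|^{1+\delta}\le c\,\eta^{-(1+\delta)}$ with no $d_N$ factor, is available; the paper then passes the limit through the integral using the Cremers--Kadelka criterion (Theorem~\ref{thm:weakconvInt}) rather than tightness in $\mathcal U(D)$, checking its two uniform-integrability conditions directly from this bulk bound and the integrability of $\eta^{-1}|\overline\partial\Psi_f|$. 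Your route via tightness on compact rectangles would also handle the region $\{\eta\ge\delta\}$, but to salvage the remainder you must either adopt the paper's $N$-dependent cutoff, or split your strip once more at $d_N/N$ and apply the bulk bound on $[d_N/N,\delta]$ (giving $\int_{d_N/N}^\delta \eta^{\alpha-1}\,d\eta\le C\delta^\alpha$, now uniform in $N$) and Sosoe--Wong only on $[0,d_N/N]$.
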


\begin{proof}
Since $f$ has compact support, we may write 
\begin{equation}
\tilde{X}^{\mathrm{meso}}_{N}(f) = \frac{1}{\pi}\re\int_{0}^{1}\int_{\tau_{-}}^{\tau_{+}}V_{N}(\tau+i\eta)\overline{\partial} \Psi_{f}(\tau,\eta)\,d\tau\,d\eta 
\end{equation}
for some fixed $\tau_{-}$ and $\tau_{+}$. First note that the region $0 \leq \eta \leq d_{N}/N$ can be neglected, since
\begin{align}
&\int_{0}^{d_{N}/N}\int_{\tau_{-}}^{\tau_{+}}\mathbb{E}(|V_{N}(\tau+i\eta)|)|\overline{\partial} \Psi_{f}(\tau,\eta)|\,d\tau,d\eta \leq cd_{N}^{\epsilon}\int_{0}^{d_{N}/N}\int_{\tau_{-}}^{\tau_{+}}\eta^{\alpha-1-\epsilon}\,d\tau\,d\eta \label{swb1}\\
&= c(\tau_{+}-\tau_{-})N^{\epsilon}(d_{N}/N)^{\alpha}
\end{align}
The latter goes to zero after choosing $0 < \epsilon < (1-\gamma)\alpha$, where $d_{N} = N^{\gamma}$ with $0 < \gamma < 1$. Therefore, it suffices to study the convergence in distribution of
\begin{equation}
 X(V_{N},f) = \frac{1}{\pi}\mathrm{Re}\int_{0}^{1}\int_{\tau_{-}}^{\tau_{+}}V_{N}(\tau+i\eta)\chi_{N}(\eta)\overline{\partial} \Psi_{f}(\tau,\eta)\,d\tau\,d\eta
\end{equation}
where $\chi_{N}(\eta)$ is an indicator function, equal to $1$ on the region $d_{N}/N \leq \eta \leq 1$ and $0$ otherwise. We wish to show that $X(V_{N},f)$ converges in distribution to $X(\Gamma'^{+}_{0},f)$. Since we proved in the previous section that the finite-dimensional distributions of $V_{N}(\tau+i\eta)$ converge pointwise to $\Gamma'^{+}_{0}(\tau+i\eta)$, we can appeal to Theorem \ref{thm:weakconvInt}. Let $\Phi$ denote the set of functions $\phi(z,w): \mathbb{H}\times \mathbb{C} \to \mathbb{C}$ of the form $\phi(z,w) = \overline{\partial}\Psi_g(z) w$ where $g$ is a function in the class stated by Theorem \ref{thm:c2c}  and $M$ is as in the Theorem \ref{thm:weakconvInt}.

Let $D$ be the domain $[0,1] \times [\tau_{-},\tau_{+}]$. Then Theorem \ref{thm:weakconvInt} guarantees the convergence in distribution $(X(V_{N},f_1),\ldots,X(V_{N},f_M)) \to (X(\Gamma'^{+}_{0},f_1),\ldots,X(\Gamma'^{+}_{0},f_M))$ provided we check the following tightness conditions:
\begin{align}
\inf_{B\subset \mathbb{H}, \lambda(B)<\infty} \limsup_{N\to\infty}\mathbb{P}\left(\frac{1}{\pi}\mathrm{Re}\int_{D \setminus B}|V_{N}(\tau+i\eta)\chi_{N}(\eta)\overline{\partial}\Psi_{f}(\tau,\eta)|\,d\tau\,d\eta \geq \epsilon  \right) &= 0 \label{limBcc} \\
\lim_{K \to \infty} \limsup_{N \to \infty}\mathbb{P}\left(\frac{1}{\pi}\mathrm{Re}\int_{D}(|V_{N}(\tau+i\eta)\chi_{N}(\eta)\overline{\partial}\Psi_{f}(\tau,\eta)|-K)^+\,d\tau\,d\eta \geq \epsilon \right) &= 0, \label{limKcc}
\end{align}
First we prove \eqref{limBcc}. By Markov's inequality it suffices to check that
\begin{equation}
 \inf_{B\subset D, \lambda(B)<\infty} \limsup_{N\to\infty}\int_{D \setminus B}\mathbb{E}(|V_{N}(\tau+i\eta)|)\chi_{N}(\eta)|\overline{\partial}\Psi_{f}(\tau,\eta)|\,d\tau\,d\eta = 0 \label{limBccmark}
\end{equation}
Now since $\tau$ is fixed, the real part appearing in the denominator of the resolvent is bounded away from the edges (due to the compact support of $f$). Also the imaginary part appearing in the denominator is no less than $1/N$. Hence we can apply Proposition \ref{prop:bulkBound} and obtain $\mathbb{E}|V_{N}(\tau+i\eta)| \leq c\eta^{-1}$. By the assumptions on $f$, $\eta^{-1}|\overline{\partial}\Psi_{f}(\tau,\eta)|$ is integrable on $D \setminus B$, and so \eqref{limBccmark} is bounded by
\begin{equation}
	\inf_{B\subset D, \lambda(B)<\infty} \int_{D \setminus B}c\eta^{-1}|\overline{\partial}\Psi_f(\tau,\eta)|\, d\tau\,d\eta = 0
\end{equation}
where the last equality follows from the dominated convergence theorem. To check $\eqref{limKcc}$ we proceed similarly, noting that for any $\delta>0$, we have
\begin{equation}
 (|V_{N}(\tau+i\eta)\chi_{N}(\eta)\overline{\partial}\Psi_{f}(\tau,\eta)|-K)^+ \leq \frac{1}{K^{\delta}}\,|V_{N}(\tau+i\eta)\chi_{N}(\eta)\overline{\partial}\Psi_{f}(\tau,\eta)|^{1+\delta}
\end{equation}
Then \eqref{limKcc} is bounded by
\begin{equation}
\begin{split}
&\lim_{K \to \infty}K^{-\delta} \limsup_{N \to \infty}\int_{D}\mathbb{E}(|V_{N}(\tau+i\eta)|^{1+\delta})\chi_{N}(\eta)|\overline{\partial}\Psi_{f}(\tau,\eta)|^{1+\delta}\,d\tau\,d\eta\\
&\leq \lim_{K \to \infty}K^{-\delta}\int_{D}c_1\eta^{-1-\delta}|\overline{\partial}\Psi_f(\tau,\eta)|^{1+\delta}\,d\tau\,d\eta = 0 \label{limKcc0}
\end{split}
\end{equation}
where in \eqref{limKcc0}, we choose $\delta$ so that $0 < \delta < \alpha/(1-\alpha)$ so that the above integral is finite by the behavior $|\overline{\partial}\Psi_f(\tau,\eta)| \leq c\eta^{-\alpha}$ for small $\eta$. This completes the proof that $X(V_{N},f)$ converges in distribution to the random variable
\begin{equation}
X(\Gamma'^{+}_{0},f) = \frac{1}{\pi}\mathrm{Re}\int_{D}\Gamma'^{+}_{0}(\tau+i\eta)\overline{\partial} \Psi_{f}(\tau+i\eta)\,d\tau\,d\eta,
\end{equation}
Since the integral of a Gaussian process is Gaussian, it just remains to compute the covariance and verify it is well-defined, see the next Lemma.
\end{proof}

\begin{lemma}
	Consider the random functional defined by: 
	\begin{equation}	
		X(\Gamma'^{+}_{0},f) = \frac{1}{\pi}\mathrm{Re}\int_{\mathbb{H}}\Gamma'^{+}_{0}(\tau+i\eta)\overline{\partial} \Psi_{f}(\tau+i\eta)\,d\tau\,d\eta \label{xfrep}.
	\end{equation}
	Then the covariance $\mathbb{E}X(\Gamma'^{+}_{0},f_1)X(\Gamma'^{+}_{0},f_2)$ is given by \eqref{covar}, provided that $f_1$ and $f_2$ are in $C^{1,\alpha}(\mathbb{R})$ for some $\alpha > 0$, and $|f_i|$, $|f_i'|$ are $O(|x|^{-(1+\beta)})$ for some $\beta > 0$.
\end{lemma}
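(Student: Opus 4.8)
The plan is to turn the second moment of $X(\Gamma'^{+}_{0},f)$ into a purely deterministic double integral, factorize that integral by means of a Laplace-type representation of the limiting covariance kernel, and recognize the result as the $H^{1/2}$-inner product via the same almost-analytic extension machinery that produced \eqref{genint}. First I would expand $\re w=\tfrac12(w+\overline w)$ and abbreviate $g_j:=\overline\partial\Psi_{f_j}$, so that $X(\Gamma'^{+}_{0},f_j)=\frac{1}{2\pi}\int_{\mathbb{H}}\big(\Gamma'^{+}_{0}(z)g_j(z)+\overline{\Gamma'^{+}_{0}(z)}\,\overline{g_j(z)}\big)\,d\tau\,d\eta$. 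Multiplying two such expressions and taking expectation, the identities $\mathbb{E}(\Gamma'^{+}_{0}(z_1)\Gamma'^{+}_{0}(z_2))=0$ and $\mathbb{E}(\Gamma'^{+}_{0}(z_1)\overline{\Gamma'^{+}_{0}(z_2)})=(i(z_1-\overline{z_2}))^{-2}$ from Proposition \ref{prop:findim} kill two of the four resulting terms and identify the remaining two as complex conjugates, so that
\[
\mathbb{E}\big(X(\Gamma'^{+}_{0},f_1)X(\Gamma'^{+}_{0},f_2)\big)=\frac{1}{2\pi^{2}}\,\re\int_{\mathbb{H}}\int_{\mathbb{H}}\frac{g_1(z_1)\,\overline{g_2(z_2)}}{(i(z_1-\overline{z_2}))^{2}}\,d\tau_1\, d\eta_1\, d\tau_2\, d\eta_2,
\]
with $z_j=\tau_j+i\eta_j$. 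Here one records the absolute convergence of the integral: $|g_j(\tau+i\eta)|\le c\eta^{\alpha}$ for small $\eta$, $g_j$ has compact support in $\eta$ (the cutoff $J$), and $g_j(\tau+i\eta)=O(|\tau|^{-1-\beta})$ uniformly in that range, which together with $|i(z_1-\overline z_2)|^{-2}\le(\eta_1+\eta_2)^{-2}$ makes the integral finite.

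Next I would insert, valid for $\eta_1+\eta_2>0$, the elementary identity
\[
\frac{1}{(i(z_1-\overline{z_2}))^{2}}=\int_{0}^{\infty}k\,e^{-k(\eta_1+\eta_2)}\,e^{ik(\tau_1-\tau_2)}\,dk,
\]
which is $c^{-2}=\int_0^\infty k e^{-kc}\,dk$ with $c=(\eta_1+\eta_2)-i(\tau_1-\tau_2)$, $\re c>0$. Interchanging the $k$-integral with the two $z$-integrals (legitimate by the decay just described) factorizes the double integral into
\[
\mathbb{E}\big(X(\Gamma'^{+}_{0},f_1)X(\Gamma'^{+}_{0},f_2)\big)=\frac{1}{2\pi^{2}}\,\re\int_0^{\infty}k\,G_1(k)\,\overline{G_2(k)}\,dk,\qquad G_j(k):=\int_{\mathbb{H}}\overline\partial\Psi_{f_j}(z)\,e^{ikz}\,d\tau\,d\eta,
\]
since $e^{ikz}=e^{ik\tau}e^{-k\eta}$ splits the exponential factor between the two variables.

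Now I would evaluate $G_j(k)$. For $k>0$ the function $z\mapsto e^{ikz}$ is entire, bounded on $\mathbb{H}$, and decays as $\im z\to\infty$, while $\Psi_{f_j}$ is $C^{1}$, compactly supported in $\eta$, and satisfies $\Psi_{f_j}(\tau,0)=f_j(\tau)$; Green's theorem (equivalently the Cauchy--Pompeiu/Helffer--Sj\"ostrand identity underlying \eqref{genint}) reduces $G_j(k)$ to its boundary value, namely $G_j(k)=-i\int_{\mathbb{R}}f_j(t)e^{ikt}\,dt$, i.e.\ $\hat f_j(-k)$ times a $j$-independent constant of modulus $\sqrt{2\pi}$. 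Hence $G_1(k)\overline{G_2(k)}=2\pi\,\hat f_1(-k)\overline{\hat f_2(-k)}$; substituting, changing variables $k\mapsto-k$, and using that $f_1,f_2$ are real-valued (so $\hat f_j(-k)=\overline{\hat f_j(k)}$, which converts $\re\int_0^{\infty}$ into $\tfrac12\int_{-\infty}^{\infty}$) produces exactly \eqref{covar}. Finiteness of the limiting variance then follows because $\hat f_j$ is bounded near $0$ (as $f_j\in L^1$) and $\hat f_j(k)=O(|k|^{-1-\alpha})$ for large $|k|$ (since $f_j'$ is $\alpha$-H\"older and decaying, so $\widehat{f_j'}(k)=ik\hat f_j(k)=O(|k|^{-\alpha})$), whence $|k|\,|\hat f_1(k)\hat f_2(k)|$ is integrable.

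The algebra in the last two paragraphs is routine; the only genuinely analytic point — and the step I would expect to require real care — is the repeated appeal to Fubini: once in Step 1, to express the product of two real stochastic integrals as a single deterministic double integral (which first requires the integrals defining $X(\Gamma'^{+}_{0},f_j)$ to converge absolutely almost surely, controlled by $|\overline\partial\Psi_{f_j}|\le c\eta^{\alpha}$ near the axis together with the covariance bound for $\Gamma'^{+}_{0}$), and once in Step 2, to move the Laplace parameter outside. Making these interchanges rigorous is precisely where the H\"older regularity of $f_j'$ and the $O(|x|^{-1-\beta})$ decay of $f_j$ and $f_j'$ are used; everything else is formal manipulation.
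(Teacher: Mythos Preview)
Your proposal is correct and follows the same overall strategy as the paper: write the covariance kernel $(i(z_1-\overline{z_2}))^{-2}$ as a one-parameter integral in a Fourier/Laplace variable $k$, interchange to factorize the $\mathbb{H}\times\mathbb{H}$ integral, and then collapse each factor to the boundary via the almost-analytic extension machinery. The Fubini steps you flag are exactly the ones the paper justifies carefully.

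The one genuine difference is in the last step. The paper keeps the resolvent kernel $(x-\tau-i\eta)^{-1}$ inside, invokes the full Helffer--Sj\"ostrand identity (Lemma~\ref{lem:extension}) to recover $f_j+iH[f_j]$ on the boundary, and only then Fourier-transforms, using $\widehat{H[f]}(k)=-i\,\mathrm{sgn}(k)\hat f(k)$. You instead pair $\overline\partial\Psi_{f_j}$ against the \emph{entire} function $e^{ikz}$ and apply Green/Stokes directly, which lands on $\hat f_j$ in one move with no Hilbert transform. This is a clean shortcut: because $e^{ikz}$ is analytic and bounded on $\mathbb{H}$ for $k>0$, the Cauchy--Pompeiu reduction is simpler than for $(x-z)^{-1}$ (no principal value, no imaginary-part contribution). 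The paper's route has the advantage that it reuses Lemma~\ref{lem:extension} verbatim; yours has the advantage of avoiding the Hilbert-transform bookkeeping. Either way one must handle the non-compact $\tau$-support of $\Psi_{f_j}$---the paper does this by a cutoff $\phi_n$ and dominated convergence, and your Green's-theorem step implicitly needs the same (the side boundary terms at $\tau=\pm R$ must vanish as $R\to\infty$), which is exactly where the $O(|x|^{-1-\beta})$ decay of $f_j,f_j'$ enters.
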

\begin{proof}
Recall that the process $\Gamma^{+}_{0}(\tau+i\eta)$ for $\eta > 0$ appearing in $\eqref{xfrep}$ has the covariance structure
\begin{equation}
 \mathbb{E}(\Gamma'^{+}_{0}(\tau_1+i\eta_1)\overline{\Gamma'^{+}_{0}(\tau_2+i\eta_2)}) = \frac{1}{(i(\tau_{1}-\tau_{2}+i(\eta_{1}+\eta_{2})))^{2}} \label{covzeta}
\end{equation}
while $\mathbb{E}(\Gamma'^{+}_{0}(\tau_1+i\eta_1)\Gamma'^{+}_{0}(\tau_2+i\eta_2)) = 0$. We compute
\begin{align}
	\mathbb{E}&(X(\Gamma_0'^+,f_1),X(\Gamma_0'^+,f_2))\\
	=& \frac{1}{2\pi^2}\mathbb{E}\bigg[\int_{\mathbb{H}\times \mathbb{H}} \,dz_1 \,dz_2 \bigg\{\re\left[\Gamma'^{+}_0(z_1)\Gamma'^{+}_0(z_2)\overline{\partial}\Psi_{f_1}(z_1) \overline{\partial}\Psi_{f_2}(z_2)\right] \label{covcontr1} \\
		& + \re\left[\Gamma'^{+}_0(z_1)\overline{\partial}\Psi_{f_1}(z_1) \overline{\Gamma'^{+}_0(z_2)\overline{\partial}\Psi_{f_2}(z_2)}\right]\bigg\} \bigg], \label{covcontr2}
\end{align}
we proceed to check that the expectation is finite and that we can exchange expectation with integrals by utilizing Fubini's theorem. It suffices to verify that
\begin{equation}
	\int_{\mathbb{H}\times \mathbb{H}} \,dz_1\,dz_2 \mathbb{E}|\Gamma_0'^{+}(z_1)\Gamma_0'^{+}(z_2)| |\overline{\partial}\Psi_{f_1}(z_1)||\overline{\partial}\Psi_{f_2}(z_2)| < \infty,
\end{equation}
by using Cauchy-Schwarz, the above integral is bounded by
\begin{equation}
	\int_{\mathbb{H}\times \mathbb{H}} \,dz_1\,dz_2 \eta_1^{-1}\eta_2^{-1}|\overline{\partial}\Psi_{f_1}(z_1)||\overline{\partial}\Psi_{f_2}(z_2)| < \infty,
\end{equation}
since by our assumptions on $f_i$, we have
\begin{equation}
	|f_i'(t+\eta)-f_i'(t)|\leq \min(C_1\eta^\alpha, C_2 |t|^{-(1+\beta)}) \leq c \eta^{\alpha\sigma}|t|^{-(1+\beta)(1-\sigma)},
\end{equation}
for any $\sigma \in (0,1)$ and $c>0$ a constant independent of $t$ and $\eta$, giving us the bound
\begin{equation}
	|\overline{\partial}\Psi_{f_i}(t,\eta)| \leq c\eta^{\alpha\sigma}|t|^{-(1+\beta)(1-\sigma)},
\end{equation}
taking $\sigma < \beta/(1+\beta)$ gives a bound that is integrable (recall that $\overline{\partial}\Psi_{f_i}(t,\eta)$ is supported on $\eta \in [0,1]$). Hence, the covariance of $X(\Gamma_0'^{+},f)$ is finite, which implies, in particular that $X(\Gamma_0'^{+},f)$ is finite almost-surely --- yielding that the process is well-defined as required. 

After taking the expectation inside the integral, the first term \eqref{covcontr1} vanishes identically. In the second term \eqref{covcontr2}, we write the covariance in the Sobolev form
\begin{equation}
	\label{xflim} \mathbb{E}\Gamma'^{+}_{0}(\tau+i\eta)\overline{\Gamma'^{+}_{0}(\tau+i\eta)} = \frac{1}{2\pi}\int_{\mathbb{R}}\,dk |k|\widehat{r_{\tau_1,\eta_1}}(k)\overline{\widehat{r_{\tau_2,\eta_2}}(k)}
\end{equation}
where $r_{\tau_1,\eta_1}(x) = (x-\tau_1-i\eta_1)^{-1}$. We now interchange the integration over $k$ with the integration over $\mathbb{H} \times \mathbb{H}$. To justify this, note that $\widehat{r_{\tau_1,\eta_1}}(k) = -2\pi i e^{-ik\tau_1}e^{-|k|\eta_1}$ and due to the conditions on $f_1,f_2$, we have the bound:
\begin{equation}
	\int_{\mathbb{H}\times \mathbb{H}} \,dz_1 \,dz_2\int_\mathbb{R} \,dk |\overline{\partial}{\Psi}_{f_1}(z_1)| |\overline{\partial}\Psi_{f_2}(z_2)|		 |k|e^{-|k|\eta_1}e^{-|k|\eta_2} \leq C \int_0^1\int_0^1 \,d\eta_1\,d\eta_2 \frac{(\eta_1\eta_2)^{\alpha\sigma}}{(\eta_1+\eta_2)^{2}} < \infty.
\end{equation}
After interchanging these integrals, the integration over $\mathbb{H} \times \mathbb{H}$ factorises as a product. Now we would like to interchange the Fourier transform with the integral over $\mathbb{H}$, for which it suffices to bound the following
\begin{equation}
\label{convbound}
\int_{0}^{1}\,d\eta_1\int_{-\infty}^{\infty}\,dx \int_{-\infty}^{\infty}\,d\tau_1\,\bigg{|}\frac{\overline{\partial}\Psi_{f}(\tau_1,\eta_1)}{x-\tau_1-i\eta_1}\bigg{|}=\int_{0}^{1}\,d\eta_1 \norm{g_{\eta_1}*h_{\eta_1}}_{1}
\end{equation}
where $g_{\eta_1}(\tau_1) = |\tau_1+i\eta_1|^{-1}$ and $h_{\eta_1}(\tau_1) = |\overline{\partial}\Psi_{f}(\tau_1,\eta_1)|$. To bound the $L^{1}$ norm of the convolution, we apply Young's inequality $\norm{g_{\eta_1}*h_{\eta_1}}_{1} \leq \norm{g_{\eta_1}}_{p}\norm{h_{\eta_1}}_{q}$ with $q=1-\delta$ and $p=1+\delta/(1-\delta)$ with $\delta>0$. A simple computation shows that $\norm{g_{\eta_1}}_{p} \leq c\eta_1^{-\delta/(1-\delta)}$ while for sufficiently small $\delta$, $\norm{h_{\eta_1}}_{q}$ is bounded uniformly in $\eta_1$ due to the integrability assumptions on $f$ and its derivatives. This shows that $\norm{g_{\eta_1}*h_{\eta_1}}_{1} \leq c\eta_{1}^{-\delta/(1-\delta)}$ so that \eqref{convbound} is finite.

After performing all such interchanges of integration, we finally obtain
\begin{align}
	&\mathbb{E}(X(\Gamma_0'^{+},f_1)X(\Gamma_0'^+,f_2))\\
	&=\frac{1}{8\pi}\int_{-\infty}^{\infty}|k|\,dk\int_{-\infty}^{\infty}\,dx \,e^{-ikx}\frac{1}{\pi}\int_\mathbb{H} \,dz_1 \frac{1}{x-z_1}\overline{\partial}\Psi_{f_1}(z_1)\\
	&\times \overline{\int_{-\infty}^{\infty}\,dx\,e^{-ikx}\,\frac{1}{\pi}\int_\mathbb{H} \,dz_2\frac{1}{x-z_2}\overline{\partial}\Psi_{f_2}(z_2)}+\mathrm{c.c.} \label{covcomp}
\end{align}

Now the inner integrals over $\mathbb{H}$ can be evaluated by Lemma \ref{lem:extension}. There is a caveat however, Lemma \ref{lem:extension} requires the function $f$ to be compactly supported. We remedy this by taking our function $f$ and multiplying it by a cutoff function $\phi_n = \phi(x/n)$ where $\phi(x)$ is 1 on $[-1,1]$ and vanishes outside $[-2,2]$, we let $f_n = \phi_n f$. By Lemma \ref{lem:extension} we have the identity
\begin{equation}
\frac{1}{\pi}\int_{0}^{\infty}\,d\eta_1\int_{-\infty}^{\infty}\,d\tau_1 \frac{1}{x-\tau_1-i\eta_1}\overline{\partial}\Psi_{f_n}(\tau_1,\eta_1) = f_n(x)+iH[f_n](x) \label{genextension}.
\end{equation}
It is well known that $H$ is a bounded operator from $L^2(\mathbb{R})$ to itself, therefore if we take the limit as $n\to \infty$ on both sides of \eqref{genextension} (and note that $f_n \to f$ pointwise everywhere and in $L^2(\mathbb{R})$) we have
\begin{equation}
	\lim_{n\to\infty}\frac{1}{\pi}\int_{0}^{\infty}\,d\eta_1\int_{-\infty}^{\infty}\,d\tau_1 \frac{1}{x-\tau_1-i\eta_1}\overline{\partial}\Psi_{f_n}(\tau_1,\eta_1) = f(x)+iH[f](x),
\end{equation}
we check that the limit interchanges with integral by noting that
\begin{align}
	|\overline{\partial}\Psi_{f_n}(t,\eta)| \leq& 2|f_n'(t)- f_n'(t+\eta)||J(\eta)| + 2(|f(t)| + |f(t+\eta)|)|J'(\eta)|,\\
	|f_n'(t)- f_n'(t+\eta)| \leq& |\phi_n'(t+\eta)||f(t+\eta)-f(t)| + |\phi_n'(t+\eta)-\phi_n'(t)||f(t)|\\
				& +|\phi_n(t+\eta)||f'(t+\eta)-f'(t)| + |f'(t)||\phi_n(t+\eta)-\phi_n(t)|,
\end{align}
and that $\phi_n(t) = \phi(t/n)$ is inifinitely differentiable and therefore in $C^{k,\alpha}(\mathbb{R})$ for any $k\in \mathbb{N}$, $\alpha\in(0,1)$. So we may bound for large $t$, $|\overline{\partial}\Psi_{f_n}(t,\eta)|$ by $C \eta^{\alpha\sigma}|t|^{-(1+\beta)(1-\sigma)}$ for some constant $C$ and any $\sigma \in(0,1)$, it follows by the dominated convergence theorem that we may interchange limit with integral to obtain
\begin{equation}
	\frac{1}{\pi}\int_{0}^{\infty}\,d\eta_1\int_{-\infty}^{\infty}\,d\tau_1 \frac{1}{x-\tau_1-i\eta_1}\overline{\partial}\Psi_{f}(\tau_1,\eta_1) = f(x)+iH[f](x). \label{genextension2}
\end{equation}

The Fourier transform of the Hilbert transform is given by
\begin{equation}
\widehat{H[f]}(k) = -i\,\mathrm{sgn}(k)\hat{f}(k). \label{hatH}
\end{equation}
Hence, inserting \eqref{genextension2} into \eqref{covcomp} and applying \eqref{hatH} yields the limiting covariance structure
\begin{equation}
\begin{split}
\mathbb{E}(X(f_1)X(f_2)) &= \frac{1}{8\pi}\int_{-\infty}^{\infty}\,dk\,|k|\hat{f_1}(k)\overline{\hat{f_2}(k)}|1-i\,\mathrm{sgn}(k)|^{2}\,dk+\mathrm{c.c.}\\
&=\frac{1}{2\pi}\int_{-\infty}^{\infty}\,dk\,|k|\hat{f_1}(k)\overline{\hat{f_2}(k)}
\end{split}
\end{equation}
\end{proof}

\subsection{Functions supported on the real line}
\label{sec:realsupp}
The main goal of this subsection is to remove the assumption of compact support from the functions $f$ in Theorem \ref{thm:c2c} subject to the following decay condition on $f$ and $f'$: for some $\beta>0$ and $|x|$ large enough, $f(x)$ and $f'(x)$ are $O(|x|^{-1-\beta})$. This will complete the proof of the corresponding statement in our main Theorem \ref{th:maintheorem}. We begin by approximating $f$ by a compactly supported function whose support grows at a rate $O(d_{N})$ as $N \to \infty$. The support of the test function can now extend over the edges of the spectrum, so that the resolvent bounds of Proposition \ref{prop:bulkBound} cannot be applied. The goal of this subsection is to prove the following 
\begin{theorem}
\label{thm:c2csupp}
Let $d_{N} = N^{\gamma}$ with $0 < \gamma < 1/3$. If for some $\alpha>0$ and $\beta>0$, we have $f \in C^{1,\alpha}(\mathbb R)$ where $f(x)$ and $f'(x)$ decay faster than $|x|^{-1-\beta}$ for large $|x|$, then the random variable $X^{\mathrm{meso}}_{N}(f)$ converges in distribution to a Gaussian random variable with variance given by \eqref{covarc2c} of Theorem \ref{thm:c2c}. Moreover, the multidimensional version stated in Theorem \ref{thm:c2c} continues to hold for functions of this class.
\end{theorem}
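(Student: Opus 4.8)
The plan is to approximate $f$ by the compactly supported function $f_{N}=f\,\phi_{N}$, where $\phi_{N}(x)=\phi(x/(Ld_{N}))$ with $\phi$ smooth, $\phi\equiv1$ on $[-1,1]$, $\mathrm{supp}\,\phi\subset[-2,2]$, and $L>0$ a fixed constant chosen small enough that the support of $\overline{\partial}\Psi_{f_{N}}$ lies inside $\{|E+\tau/d_{N}|\le 5\}$, where Proposition \ref{prop:sw} is available. Writing $r_{N}=f-f_{N}$, I would reduce Theorem \ref{thm:c2csupp} to two claims: \textbf{(i)} $\tilde{X}^{\mathrm{meso}}_{N}(r_{N})\to 0$ in $L^{2}$; and \textbf{(ii)} $\tilde{X}^{\mathrm{meso}}_{N}(f_{N})$ converges in distribution to a centered Gaussian whose variance tends to $\frac{1}{2\pi}\int_{\mathbb{R}}|k|\,|\hat{f}(k)|^{2}\,dk$. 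Since the decay $f,f'=O(|x|^{-1-\beta})$ together with $f'\in C^{\alpha}$ forces $\int|k||\hat{f}(k)|^{2}dk<\infty$ and $f_{N}\to f$ in this norm, (i) and (ii) combine to give the one-dimensional assertion; running the same argument on arbitrary linear combinations $\sum_{p}c_{p}f_{p}$ (still in the stated class) yields the multidimensional version.

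For \textbf{(i)}, I would start from the Helffer--Sj\"ostrand identity \eqref{genint} (extended to functions with the stated decay, as in the Lemma preceding this Theorem) and, by the triangle inequality and Cauchy--Schwarz, bound
\begin{equation*}
\mathbb{E}\left|\tilde{X}^{\mathrm{meso}}_{N}(r_{N})\right|^{2}\le C\left(\int_{0}^{1}\!\!\int_{\mathbb{R}}\mathbb{E}^{1/2}\left|V_{N}(\tau+i\eta)\right|^{2}\,\left|\overline{\partial}\Psi_{r_{N}}(\tau,\eta)\right|\,d\tau\,d\eta\right)^{2},
\end{equation*}
so it suffices to make the inner integral $o(1)$. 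On its support $\overline{\partial}\Psi_{r_{N}}$ is concentrated on $|\tau|\gtrsim Ld_{N}$, and interpolating between $|\overline{\partial}\Psi_{r_{N}}|\lesssim\eta^{\alpha}$ and $|\overline{\partial}\Psi_{r_{N}}|\lesssim|\tau|^{-1-\beta}$ gives $|\overline{\partial}\Psi_{r_{N}}(\tau,\eta)|\lesssim\eta^{\alpha\sigma}|\tau|^{-(1+\beta)(1-\sigma)}$ for every $\sigma\in(0,1)$. On the strip $|E+\tau/d_{N}|\le 5$ I would insert Proposition \ref{prop:sw}, $\mathbb{E}^{1/2}|V_{N}(\tau+i\eta)|^{2}\le Cd_{N}^{\epsilon}\eta^{-1-\epsilon}$: for $\epsilon<\alpha\sigma$ the $\eta$-integral converges, and for $\sigma<\beta/(1+\beta)$ the $\tau$-integral over $|\tau|\gtrsim Ld_{N}$ is $O(d_{N}^{-\beta+(1+\beta)\sigma})$, so this part is $O(d_{N}^{\epsilon-\beta+(1+\beta)\sigma})\to 0$ for $\epsilon,\sigma$ small. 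On $\{|E+\tau/d_{N}|>5\}$ the resolvent is bounded off the spectrum and a standard variance estimate gives $\mathbb{E}^{1/2}|V_{N}(\tau+i\eta)|^{2}\lesssim d_{N}^{-1}/\mathrm{dist}(E+\tau/d_{N},[-2,2])$, whose integral against $|\overline{\partial}\Psi_{r_{N}}|$ is $O(d_{N}^{-(1+\beta)(1-\sigma)})\to 0$. This proves (i).

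For \textbf{(ii)}, since $f_{N}$ is compactly supported with $\overline{\partial}\Psi_{f_{N}}$ supported where Proposition \ref{prop:sw} holds, I would follow the proof of Theorem \ref{thm:c2c}, the only novelties being the $N$-dependence of $f_{N}$ and the growth of its support. The slab $0\le\eta\le d_{N}/N$ is discarded as in \eqref{swb1}, now with the growing $\tau$-integration controlled by the decay of $\overline{\partial}\Psi_{f_{N}}$ (same interpolation as in (i)). On $d_{N}/N\le\eta\le1$ I would verify the tightness conditions \eqref{limBcc}--\eqref{limKcc} (equivalently apply Theorem \ref{thm:weakconvInt}) by splitting the $\tau$-range: on the inner bulk $|E+\tau/d_{N}|\le 2-\delta$ use $\mathbb{E}|V_{N}(\tau+i\eta)|^{2}\le C\eta^{-2}$ from Proposition \ref{prop:bulkBound}, and on the residual strip $2-\delta\le|E+\tau/d_{N}|\le 5$, which straddles the spectral edges, use Proposition \ref{prop:sw}; in both cases the decay of $\overline{\partial}\Psi_{f_{N}}$ makes the $\tau$-integrals finite uniformly in $N$. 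The finite-dimensional convergence $V_{N}\Rightarrow\Gamma'^{+}_{0}$ of Proposition \ref{prop:findim} then passes through the integral; to identify the limit one approximates $\tilde{X}^{\mathrm{meso}}_{N}(f_{N})$ in $L^{2}$, uniformly in $N$, by finite Riemann sums $\sum_{i}c_{i}V_{N}(z_{i})$ (jointly asymptotically Gaussian by Proposition \ref{prop:findim}), the error being controlled by the same second-moment bounds, and computes the limiting variance via the Lemma preceding this Theorem to be $\frac{1}{2\pi}\int|k|\,|\hat{f_{N}}(k)|^{2}\,dk\to\frac{1}{2\pi}\int|k|\,|\hat{f}(k)|^{2}\,dk$. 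This gives (ii).

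I expect the main obstacle to be the region $|\tau|\gtrsim d_{N}$ (eigenvalues at macroscopic distance from $E$) together with a neighbourhood of the spectral edges: there the only second-moment control for $V_{N}$ is Proposition \ref{prop:sw}, which costs a factor $d_{N}^{\epsilon}$ and provides only the weak singularity $\eta^{-2-\epsilon}$, and this must be balanced against the now unbounded $\tau$-integration. The decay hypothesis $f,f'=O(|x|^{-1-\beta})$ is exactly what permits the interpolation between $|\overline{\partial}\Psi_{f}|\lesssim\eta^{\alpha}$ and $|\overline{\partial}\Psi_{f}|\lesssim|\tau|^{-1-\beta}$ needed to absorb these losses while keeping the tail integrable uniformly in $N$; the scale restriction $0<\gamma<1/3$ plays no role in this step and is inherited from the finite-dimensional convergence of Section \ref{se:res}.
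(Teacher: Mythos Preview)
Your strategy is close in spirit to the paper's, but the paper organizes the cutoff differently and thereby avoids the one real gap in your argument.

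\textbf{The paper's route.} The paper chooses the cutoff $\phi_N$ so that $\phi_N\equiv 1$ on $\{|E+x/d_N|\le 2\}$ and $\phi_N\equiv 0$ on $\{|E+x/d_N|\ge 4\}$. The tail $r_N=(1-\phi_N)f$ is then handled \emph{not} through the Helffer--Sj\"ostrand representation but by spectral confinement (Lemma~\ref{lem:cutofflemma}): since $X^{\mathrm{meso}}_N(r_N)\neq 0$ only on the event $\{\lambda_1>4\}\cup\{\lambda_N<-4\}$, which has probability $\le e^{-N^c}$, one gets $\mathbb{E}|X_N^{\mathrm{meso}}(r_N)|^2\le N^2\|f\|_\infty^2 e^{-N^c}\to 0$ for free. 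After writing $\tilde X_N^{\mathrm{meso}}(f_N)$ via \eqref{genint}, the paper then \emph{swaps $f_N$ back to $f$} inside the integral (Proposition~\ref{prop:addsupp}), so that the integrand $\overline{\partial}\Psi_f$ is $N$-independent when Theorem~\ref{thm:weakconvInt} is applied (Proposition~\ref{prop:tight2}). The residual indicator $\tilde\chi_N(\tau)$ confines the integration to $|E+\tau/d_N|\le 4$, where Proposition~\ref{prop:sw} is available on $D_{\mathrm{out}}$ and Proposition~\ref{prop:bulkBound} on $D_{\mathrm{bulk}}$.

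\textbf{The gap in your step (i).} Your control of $\tilde X_N^{\mathrm{meso}}(r_N)$ via \eqref{genint} forces you to bound $\mathbb{E}|V_N(\tau+i\eta)|^2$ on the set $\{|E+\tau/d_N|>5\}$, which lies \emph{outside} the range of Proposition~\ref{prop:sw}. The ``standard variance estimate'' $\mathbb{E}^{1/2}|V_N|^2\lesssim d_N^{-1}/\mathrm{dist}(E+\tau/d_N,[-2,2])$ you invoke there is not provided by the paper and is not immediate: the a priori martingale/interlacing bound gives only $\mathrm{Var}(\mathrm{Tr}\,G(w))\lesssim N/|\mathrm{Im}\,w|^2$, which is far too weak, and any improvement requires conditioning on the high-probability event that the spectrum sits in $[-2-\epsilon,2+\epsilon]$ together with a separate (nontrivial) argument on the complementary event. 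The paper sidesteps this entirely via Lemma~\ref{lem:cutofflemma}.

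\textbf{A smaller issue in your step (ii).} Keeping the $N$-dependent $f_N$ in the Helffer--Sj\"ostrand integral means Theorem~\ref{thm:weakconvInt} does not apply directly (the class $\Phi$ there is fixed). Your Riemann-sum workaround is plausible but adds work; the paper's device of replacing $\overline{\partial}\Psi_{f_N}$ by $\overline{\partial}\Psi_f$ first (Proposition~\ref{prop:addsupp}, whose support lies in $2\le|E+\tau/d_N|\le 4$ and hence is covered by Proposition~\ref{prop:sw}) gives an $N$-independent integrand and lets Theorem~\ref{thm:weakconvInt} be applied verbatim, with the limiting covariance identified once and for all by the Lemma preceding the theorem.

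In short: replace your Helffer--Sj\"ostrand treatment of the tail in (i) by the eigenvalue-confinement argument of Lemma~\ref{lem:cutofflemma}, and in (ii) swap $f_N$ for $f$ (Proposition~\ref{prop:addsupp}) before invoking tightness. With those two changes your outline coincides with the paper's proof.
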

We will prove this Theorem by means of the following Lemma and two Propositions.
\begin{lemma}
\label{lem:cutofflemma}
Let $\phi_N(x)$ denote a smooth cutoff function equal to $1$ in when $|E+x/d_{N}| \leq 2$ and equal to $0$ when $|E+x/d_{N}| \geq 4$. Let $f_{N}(x) := f(x)\phi_{N}(x)$. Then $X^{\mathrm{meso}}_{N}(f) = X^{\mathrm{meso}}_{N}(f_{N})+o_{L^{2}}(1)$.
\end{lemma}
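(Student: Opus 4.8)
The plan is to estimate directly the second moment of the difference
\[
D_{N} := X^{\mathrm{meso}}_{N}(f) - X^{\mathrm{meso}}_{N}(f_{N}) = \sum_{j=1}^{N} f(d_{N}(E-\lambda_{j}))\bigl(1-\phi_{N}(d_{N}(E-\lambda_{j}))\bigr),
\]
using that $D_{N}$ only collects eigenvalues at or beyond the spectral edge, where $f$ is small (by its decay) while eigenvalues are scarce (by rigidity). Since $o_{L^{2}}(1)$ means $\mathbb{E}|D_{N}|^{2}\to 0$, this is all that must be shown.

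First I would record two pointwise bounds on a single summand. By construction of $\phi_{N}$, the $j$-th term vanishes unless $|\lambda_{j}|\geq 2$; on that set $|E-\lambda_{j}|\geq c_{0}:=\min(2-E,2+E)>0$, hence $|d_{N}(E-\lambda_{j})|\geq c_{0}d_{N}\to\infty$, and the decay hypothesis $f(x)=O(|x|^{-1-\beta})$ yields the uniform bound $|f(d_{N}(E-\lambda_{j}))(1-\phi_{N}(\cdots))|\leq C d_{N}^{-1-\beta}$ for $N$ large. Trivially, this same quantity is at most $M:=\sup_{x\in\mathbb{R}}|f(x)|<\infty$ for \emph{every} $j$.

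Next I would feed in eigenvalue rigidity. By Theorem~\ref{thm:optimalbound} --- equivalently, by the standard rigidity estimates for Wigner matrices, see \cite{EYY12rig} --- for any $\epsilon,D>0$ there is an event $\mathcal{E}_{N}$ with $\mathbb{P}(\mathcal{E}_{N}^{c})\leq N^{-D}$ on which every eigenvalue lies within $N^{-2/3+\epsilon}$ of $[-2,2]$; counting with the fact that the semicircle mass of $[2-s,2]$ is of order $s^{3/2}$ then shows that on $\mathcal{E}_{N}$ the number of indices $j$ with $|\lambda_{j}|\geq 2$ is at most $N^{\epsilon'}$, with $\epsilon'>0$ as small as we please. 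Therefore, on $\mathcal{E}_{N}$, $|D_{N}|\leq C N^{\epsilon'}d_{N}^{-1-\beta}=C N^{\epsilon'-\gamma(1+\beta)}$, which tends to $0$ once $\epsilon'<\gamma(1+\beta)$ (possible as $\gamma,\beta>0$). On $\mathcal{E}_{N}^{c}$ only the crude bound $|D_{N}|\leq MN$ is available, so that part contributes at most $M^{2}N^{2}\,\mathbb{P}(\mathcal{E}_{N}^{c})\leq M^{2}N^{2-D}$ to $\mathbb{E}|D_{N}|^{2}$, which is $o(1)$ for $D>2$. Adding the two contributions gives $\mathbb{E}|D_{N}|^{2}\to 0$.

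The only substantive input is the edge-rigidity estimate bounding the number of eigenvalues at or outside $\pm 2$ by a small power of $N$; everything else is bookkeeping with the decay of $f$ together with a union bound over the rigidity event. One minor care point: the cutoff remainder $f\cdot(1-\phi_{N})(d_{N}(E-\cdot))$ is not compactly supported --- it retains the full tail of $f$ on $\{|\lambda|\geq 4\}$ --- but this is harmless, since on $\mathcal{E}_{N}$ no eigenvalue reaches that region, so only the boundedness of $f$ is needed off $\mathcal{E}_{N}$.
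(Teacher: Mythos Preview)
Your proof is correct and takes a genuinely different route from the paper's. The paper argues that $(1-\phi_N)f$ picks up only eigenvalues lying well outside $[-2,2]$, invokes the exponential large-deviation bound $\mathbb{P}(\max_j|\lambda_j|>4)\leq e^{-N^c}$, and pairs it with the crude estimate $|D_N|\leq N\|f\|_\infty$; this is shorter and uses only that $f$ is bounded, never its decay. You instead work right at the spectral edge: the hypothesis $f(x)=O(|x|^{-1-\beta})$ forces each surviving summand to be $O(d_N^{-1-\beta})$, and rigidity caps the number of survivors at $N^{\epsilon'}$, so the product tends to zero. Your argument is a bit longer but is more robust to the precise placement of the cutoff---it goes through even when $\phi_N$ drops below $1$ already at $|\lambda_j|=2$, which is what the lemma statement literally says, whereas the paper's one-line argument tacitly needs $\phi_N\equiv 1$ on a neighborhood strictly containing $[-2,2]$.
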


\begin{proof}
	We follow the same technique here as in Section 4 of \cite{SW13}. Note that $\tilde{X}_N^{\mathrm{meso}}((1-\phi_N)f)$ is only non-zero when $\lambda_{1} < -4$ or $\lambda_{N} > 4$, so that
	\begin{equation}
		\mathbb{P}\left( \left|\sum_i\left( (1-\phi_N)f \right)(d_N\lambda_i)\right| > 0 \right) \leq\mathbb{P}\left( \lambda_1 > 4 \right) + \mathbb{P}\left( \lambda_N < -4 \right),
	\end{equation}
	these quantities are bounded by $e^{-N^c}$ by \cite[Lemma 7.2]{EYY12}. Thus
	\begin{align}
		\mathbb{E}|X_N^{\mathrm{meso}}\left( (1-\phi_N)f \right)|^2 &= 2 \int_0^{N\|f\|_{L^\infty}} x\mathbb{P}\left( \left|\sum_i (1-\phi_N)f(d_N\lambda_i)\right| \geq x \right)\,dx, \\
		&\leq N^2 \|f\|^2_{L^\infty(\mathbb{R})} \mathbb{P}\left( \left| \sum_i (1-\phi_N)f(d_N\lambda_i)\right| > 0 \right),\\
	&\leq N^2 e^{-N^c}\|f\|^2_{L^\infty(\mathbb{R})} \to 0;
	\end{align}
\end{proof}

Now we apply the Helffer-Sj\"ostrand formula \eqref{genint} to the function $f_{N}$, obtaining
\begin{equation}
\tilde{X}^{\mathrm{meso}}_{N}(f_{N}) = \frac{1}{\pi}\re\int_{0}^{1}\int_{\mathbb{R}}V_{N}(\tau+i\eta)\tilde{\chi}_{N}(\tau)\overline{\partial} \Psi_{f_{N}}(\tau,\eta)\,d\tau\,d\eta 
\end{equation}
where $\tilde{\chi}_{N}(\tau)$ is the indicator function of the region $|E+\tau/d_{N}| \leq 4$. Repeating the derivation of \ref{swb1}, we see that  
\begin{equation}
\int_{0}^{d_{N}/N}\int_{\mathbb{R}}\,\mathbb{E}(|V_{N}(\tau+i\eta)|\tilde{\chi}_{N}(\tau))|\overline{\partial} \Psi_{f_{N}}(\tau,\eta)|\,d\tau\,d\eta \to 0
\end{equation} 
on all scales of the form $d_{N} = N^{\gamma}$ with $0 < \gamma < 1$. To apply Theorem \ref{thm:weakconvInt}, we replace the $N$-dependent $f_{N}$ with $f$, noting that $\overline{\partial}\Psi_{f_{N}}-\overline{\partial}\Psi_{f} = \overline{\partial}\Psi_{f_{N}-f}$, where $f_{N}(\tau)-f(\tau) = f(\tau)(\phi_{N}(\tau)-1)$ is supported on the region $|\tau/d_{N}-E| \geq 4$. 
\begin{proposition}
\label{prop:addsupp}
We have
\begin{equation}
\frac{1}{\pi}\re\int_{0}^{1}\int_{\mathbb{R}}V_{N}(\tau+i\eta)\overline{\partial} \Psi_{f_{N}-f}(\tau,\eta)\,d\tau\,d\eta = o_{L^{1}}(1). \label{addsuppint}
\end{equation}
\end{proposition}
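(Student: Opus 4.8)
The plan is to estimate the $L^1$-norm of the left-hand side of \eqref{addsuppint}; bounding the integrand in absolute value (which also disposes of the $\tfrac1\pi\re$), it suffices to show that
\begin{equation*}
\int_{0}^{1}\!\int_{\mathbb{R}}\mathbb{E}|V_{N}(\tau+i\eta)|\,\big|\overline{\partial}\Psi_{f_{N}-f}(\tau,\eta)\big|\,d\tau\,d\eta \longrightarrow 0, \qquad N\to\infty .
\end{equation*}
The first step is a deterministic bound on the kernel. Since $f_{N}-f=f(\phi_{N}-1)$ is supported in $\{|E+\tau/d_{N}|\geq 2\}$, and there $|f(\tau)|,|f'(\tau)|\leq C|\tau|^{-1-\beta}$ while $|\phi_{N}^{(j)}|=O(d_{N}^{-j})$, one verifies by exactly the interpolation used in the proof of the Lemma following Theorem \ref{thm:c2c} (applied to $g=f_{N}-f$) that $\overline{\partial}\Psi_{f_{N}-f}$ is supported in $\{\eta\in[0,1]\}\cap\{|E+\tau/d_{N}|\geq 2\}$ and that, \emph{uniformly in $N$},
\begin{equation*}
\big|\overline{\partial}\Psi_{f_{N}-f}(\tau,\eta)\big|\leq c\,\eta^{\alpha\sigma}\,|\tau|^{-(1+\beta)(1-\sigma)}\qquad\text{for every }\sigma\in(0,1),
\end{equation*}
obtained by interpolating the H\"older bound $\leq C_{1}\eta^{\alpha}$ against the decay bound $\leq C_{2}|\tau|^{-1-\beta}$. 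In particular the $\tau$-support lies in $\{|\tau|\geq\kappa_{0}d_{N}\}$ for a fixed $\kappa_{0}>0$.

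I would then split the $\tau$-integral into the \emph{near region} $2\leq|E+\tau/d_{N}|\leq 5$ and the \emph{far region} $|E+\tau/d_{N}|>5$, after fixing $0<\sigma<\beta/(1+\beta)$ and then $0<\epsilon<2\alpha\sigma$. On the near region Proposition \ref{prop:sw} together with Cauchy--Schwarz gives $\mathbb{E}|V_{N}(\tau+i\eta)|\leq\sqrt{C}\,d_{N}^{\epsilon/2}\eta^{-1-\epsilon/2}$; the $\tau$-support has Lebesgue measure $O(d_{N})$ and is contained in $\{\kappa_{0}d_{N}\leq|\tau|\leq Cd_{N}\}$. The choice of $\epsilon$ makes $\int_{0}^{1}\eta^{\alpha\sigma-1-\epsilon/2}\,d\eta$ finite, and since $(1+\beta)(1-\sigma)>1$ we have $\int_{\kappa_{0}d_{N}}^{Cd_{N}}|\tau|^{-(1+\beta)(1-\sigma)}\,d\tau\leq Cd_{N}^{1-(1+\beta)(1-\sigma)}$, so the near-region contribution is $O\!\big(d_{N}^{\epsilon/2+1-(1+\beta)(1-\sigma)}\big)$, whose exponent is negative once $\sigma,\epsilon$ are small; hence it tends to $0$.

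On the far region $\phi_{N}\equiv 0$, so $\overline{\partial}\Psi_{f_{N}-f}=-\overline{\partial}\Psi_{f}$, and $z=E+(\tau+i\eta)/d_{N}$ satisfies $|z|\geq|\re z|\geq 5$, hence $\mathrm{dist}(z,[-2,2])\geq 3$ \emph{irrespective of $\eta$}. Here I would invoke the classical fact that, for Wigner matrices with sub-Gaussian entries, $\var(\tr G(z))$ is bounded uniformly over $z$ with $\mathrm{dist}(z,[-2,2])\geq 1$: this follows, for instance, from the Neumann expansion $\tr G(z)=-\sum_{k\geq 0}\tr(\mathcal{H}^{k})z^{-k-1}$ together with the standard variance bound $\var(\tr\mathcal{H}^{k})\leq C^{k}$ (the series then converges in $L^{2}$ for $|z|$ beyond a fixed constant, in particular for $|z|\geq 5$), or more directly from the martingale decomposition of Section \ref{se:res}, which becomes elementary off the spectrum. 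Consequently $\mathbb{E}|V_{N}(\tau+i\eta)|=d_{N}^{-1}\sqrt{\var(\tr G(z))}\leq Cd_{N}^{-1}$ uniformly on the far region, and its contribution is at most $Cd_{N}^{-1}\big(\int_{0}^{1}\eta^{\alpha\sigma}\,d\eta\big)\big(\int_{|\tau|\geq 3d_{N}}|\tau|^{-(1+\beta)(1-\sigma)}\,d\tau\big)\leq Cd_{N}^{-(1+\beta)(1-\sigma)}\to 0$. Combining the two regions proves \eqref{addsuppint}.

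The main obstacle is the far region $|E+\tau/d_{N}|>5$: there neither the optimal local law (Theorem \ref{thm:optimalbound}, a bulk statement) nor the Sosoe--Wong bound (Proposition \ref{prop:sw}, stated only for $|E+\tau/d_{N}|\leq 5$) applies, and because $\tau$ ranges over an unbounded set one needs a resolvent fluctuation estimate that is uniform as $\re z\to\infty$; this is precisely where the decay hypothesis on $f$ (in place of compact support) enters essentially. A secondary, purely bookkeeping, difficulty is verifying that the interpolated bound on $\overline{\partial}\Psi_{f_{N}-f}$ holds uniformly in $N$ despite the $N$-dependence of the cutoff $\phi_{N}$, which reduces to the observation that every $\phi_{N}$-derivative contribution carries a negative power of $d_{N}$.
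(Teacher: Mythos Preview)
Your near-region argument (Sosoe--Wong bound plus the interpolated estimate $|\overline{\partial}\Psi_{f_N-f}|\le c\,\eta^{\alpha\sigma}|\tau|^{-(1+\beta)(1-\sigma)}$, integrated over $\{2\le|E+\tau/d_N|\le 5\}$) is exactly the paper's proof: the paper notes that the integrand is supported on $D_{\mathrm{out}}$ and invokes the identical calculation \eqref{intdout}.

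The difference is your far-region analysis. You treat the statement literally and handle $|E+\tau/d_N|>5$ separately via an off-spectrum variance bound for $\tr G(z)$. The paper does not do this, because in context the indicator $\tilde{\chi}_N(\tau)=\mathbf{1}\{|E+\tau/d_N|\le 4\}$ is present throughout (it appears in the Helffer--Sj\"ostrand representation of $\tilde X_N^{\mathrm{meso}}(f_N)$ just before the Proposition and in the definition of $I(f,V_N)$ just after), so the integrand is genuinely supported on $D_{\mathrm{out}}$ and the far region is empty. In effect you are proving a slightly stronger statement than the paper needs.

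Your far-region argument is correct in spirit but a bit informal: the Neumann-series route requires $\var(\tr\mathcal{H}^k)\le C^k$ with $\sqrt{C}<5$, which is true but not stated in the paper. If you want to keep this part, a cleaner alternative is to condition on the event $\|\mathcal{H}\|_{\mathrm{op}}\le 3$ (probability $\ge 1-e^{-N^c}$), on which $\|G(z)\|,\|G_k(z)\|\le 1/2$ uniformly for $|z|\ge 5$; then the Schur-complement martingale differences used in Section~\ref{se:res} are each $O_{L^2}(N^{-1/2})$ deterministically on that event, giving $\var(\tr G(z))=O(1)$ directly.
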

We postpone the proof of this Proposition until the end of this subsection, where it will follow from a more general argument. Thus $X^{\mathrm{meso}}_{N}(f) = I(f,V_{N})+o_{L^{1}}(1)$ where
\begin{equation}
I(f,V_N) =  \frac{1}{\pi}\re\int_{0}^{1}\int_{-\infty}^{\infty}V_{N}(\tau+i\eta) \chi_{N}(\eta)\tilde{\chi}_{N}(\tau)\overline{\partial} \Psi_{f}(\tau,\eta)\,d\tau d\eta.
\end{equation}
To show that $I(f,V_{N})$ converges to $I(f,\Gamma'^{+}_{0})$, thus completing the proof of Theorem \ref{thm:c2csupp}, it remains to check the following tightness result.
\begin{proposition}
\label{prop:tight2}
Consider the domain $D = [0,1] \times \mathbb{R}$. We have the following estimates:
\begin{align}
\inf_{B\subset \mathbb{H}, \lambda(B)<\infty} \limsup_{N\to\infty}\int_{D\setminus B}\mathbb{E}(|V_{N}(t+i\eta)|\chi_{N}(\eta)\tilde{\chi}_{N}(\tau))|\overline{\partial}\Psi_{f}(t,\eta)|\,d\eta\,dt=0 \label{limB},\\
\lim_{K \to \infty}K^{-\delta}\lim_{N \to \infty}\int_{D}\mathbb{E}(|V_{N}(t+i\eta)|^{1+\delta}\chi_{N}(\eta)\tilde{\chi}_{N}(\tau))|\overline{\partial}\Psi_{f}(t,\eta)|^{1+\delta}\,d\eta\,dt &= 0. \label{limK}
\end{align}
\end{proposition}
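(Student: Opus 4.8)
\textbf{Proof proposal for Proposition \ref{prop:tight2}.} The plan is to decompose the effective region of integration --- namely the support of $\chi_N(\eta)\tilde\chi_N(\tau)$, which is $\{(\tau,\eta):d_N/N\le\eta\le1,\ |E+\tau/d_N|\le4\}$, the strip $0\le\eta\le d_N/N$ having already been discarded as in \eqref{swb1} --- into a \emph{bulk} part $D_{\mathrm{bulk}}=\{d_N/N\le\eta\le1,\ |E+\tau/d_N|\le2-\delta_0\}$ and an \emph{edge} part $D_{\mathrm{edge}}=\{d_N/N\le\eta\le1,\ 2-\delta_0<|E+\tau/d_N|\le4\}$, where $\delta_0>0$ is fixed small enough that $[E-2\delta_0,E+2\delta_0]\subset(-2,2)$ and Proposition \ref{prop:bulkBound} applies. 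On $D_{\mathrm{bulk}}$ I would use the sharp estimate $\mathbb{E}|V_N(\tau+i\eta)|\le(\mathbb{E}|V_N(\tau+i\eta)|^2)^{1/2}\le c\eta^{-1}$ from Proposition \ref{prop:bulkBound}; on $D_{\mathrm{edge}}$, where that bound is unavailable, I would fall back on Proposition \ref{prop:sw} to get $\mathbb{E}|V_N(\tau+i\eta)|\le cd_N^{\epsilon/2}\eta^{-1-\epsilon/2}$, the key point being that $|\tau|\ge\delta_0 d_N$ on all of $D_{\mathrm{edge}}$ (since $|\tau|/d_N\ge|E+\tau/d_N|-|E|>(2-\delta_0)-(2-2\delta_0)=\delta_0$), so the decay of the test function will supply a compensating negative power of $d_N$. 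Throughout I would use the interpolated bound $|\overline\partial\Psi_f(\tau,\eta)|\le c\,\eta^{\alpha\sigma}(1+|\tau|)^{-(1+\beta)(1-\sigma)}$, valid for any $\sigma\in(0,1)$ (obtained as in the covariance lemma by combining $|\overline\partial\Psi_f|\le c\eta^{\alpha}$ with $|\overline\partial\Psi_f|\le c|\tau|^{-(1+\beta)}$), and I would fix the parameters in the order: first $\sigma\in(0,\beta/(2+\beta))$, so that $(1+\beta)(1-\sigma)>1+\sigma$; then $\epsilon\in(0,2\alpha\sigma)$ in Proposition \ref{prop:sw} (assuming as we may that $\alpha\le1$, so also $\epsilon<2\sigma$); and finally $\delta>0$ small, depending on $\alpha,\sigma,\epsilon$, as dictated below.

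For \eqref{limB} I would take the exhausting family $B=B_R:=[0,1]\times[-R,R]$. On $D_{\mathrm{bulk}}\setminus B_R\subset\{|\tau|>R\}$ the integrand is bounded by $c\,\eta^{-1+\alpha\sigma}(1+|\tau|)^{-(1+\beta)(1-\sigma)}$, whose integral factorises as a constant times $\rho(R):=\int_{|\tau|>R}(1+|\tau|)^{-(1+\beta)(1-\sigma)}\,d\tau$, which is finite and tends to $0$ as $R\to\infty$ because $(1+\beta)(1-\sigma)>1$; crucially this bound is uniform in $N$. On $D_{\mathrm{edge}}$ the integrand is bounded by $c\,d_N^{\epsilon/2}\eta^{-1-\epsilon/2+\alpha\sigma}(1+|\tau|)^{-(1+\beta)(1-\sigma)}$; the $\eta$-integral over $[0,1]$ converges since $\epsilon/2<\alpha\sigma$, and since $\delta_0 d_N\le|\tau|\lesssim d_N$ there, the $\tau$-integral is $O(d_N^{1-(1+\beta)(1-\sigma)})$, whence $\int_{D_{\mathrm{edge}}}(\cdots)=O(d_N^{\epsilon/2+1-(1+\beta)(1-\sigma)})\to0$ because $(1+\beta)(1-\sigma)>1+\sigma>1+\epsilon/2$. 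Combining, $\limsup_N\int_{D\setminus B_R}(\cdots)\le c\,\rho(R)$, and letting $R\to\infty$ yields \eqref{limB}.

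For \eqref{limK} it suffices to show that $\limsup_N\int_D\mathbb{E}(|V_N(\tau+i\eta)|^{1+\delta}\chi_N\tilde\chi_N)\,|\overline\partial\Psi_f(\tau,\eta)|^{1+\delta}\,d\tau\,d\eta<\infty$ with a bound independent of $K$; then multiplying by $K^{-\delta}$ and letting $K\to\infty$ finishes. I would again split into $D_{\mathrm{bulk}}$ and $D_{\mathrm{edge}}$. On $D_{\mathrm{bulk}}$, by Jensen ($1+\delta\le2$) and Proposition \ref{prop:bulkBound} the integrand is $\le c\,\eta^{-(1+\delta)(1-\alpha\sigma)}(1+|\tau|)^{-(1+\delta)(1+\beta)(1-\sigma)}$, which is integrable over $[0,1]\times\mathbb{R}$ as soon as $\delta$ is small enough that $(1+\delta)(1-\alpha\sigma)<1$ (the $\tau$-integral being finite for all $\delta\ge0$ since $(1+\beta)(1-\sigma)>1$); this contributes a finite constant uniform in $N$. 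On $D_{\mathrm{edge}}$, Jensen and Proposition \ref{prop:sw} give an integrand $\le c\,d_N^{\epsilon(1+\delta)/2}\eta^{-(1+\delta)(1+\epsilon/2-\alpha\sigma)}(1+|\tau|)^{-(1+\delta)(1+\beta)(1-\sigma)}$; the $\eta$-integral converges for $\delta$ small since $1+\epsilon/2-\alpha\sigma<1$, and using $|\tau|\ge\delta_0 d_N$ the $\tau$-integral is $O(d_N^{1-(1+\delta)(1+\beta)(1-\sigma)})$, so this piece is $O(d_N^{1-(1+\delta)[(1+\beta)(1-\sigma)-\epsilon/2]})$, which tends to $0$ because $(1+\beta)(1-\sigma)-\epsilon/2>1$. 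This proves \eqref{limK}.

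The step I expect to be the main obstacle --- and the only place the decay hypothesis $\beta>0$ on $f$ and $f'$ is genuinely needed --- is the edge region $D_{\mathrm{edge}}$: there one must abandon the sharp resolvent bound and absorb the $d_N^{\epsilon}$ loss of Proposition \ref{prop:sw}, while at the same time the cutoff $\tilde\chi_N$ forces integration over a $\tau$-interval of length $\asymp d_N$, so a naive application of $\eta^{-1}$-type bounds would diverge like a positive power of $d_N$. The balance is restored only because on that interval $|\tau|\asymp d_N$, so $\overline\partial\Psi_f$ contributes the factor $\asymp d_N^{-(1+\beta)(1-\sigma)}$; making every residual power of $d_N$ strictly negative is what forces the order of the parameter choices above (first $\sigma$ small, then $\epsilon$ smaller, then $\delta$ smaller still), and this bookkeeping is the delicate part of the argument.
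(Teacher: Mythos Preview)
Your proposal is correct and follows essentially the same approach as the paper: split the support of $\chi_N\tilde\chi_N$ into a bulk piece where Proposition~\ref{prop:bulkBound} gives the sharp bound $\mathbb{E}|V_N|^{1+\delta}\le c\eta^{-(1+\delta)}$, and an outer piece where one resorts to Proposition~\ref{prop:sw} and absorbs the $d_N^{\epsilon}$ loss using $|\tau|\gtrsim d_N$ and the decay of $f$. The only differences are cosmetic: the paper cuts at $|E+\tau/d_N|=2$ rather than $2-\delta_0$ (which already suffices since $|E|<2$ gives $|\tau|\ge(2-|E|)d_N$ on the outer piece), and orders its parameter choices slightly differently; your bookkeeping---fixing first $\sigma<\beta/(2+\beta)$, then $\epsilon<2\alpha\sigma$, then $\delta$ small---is if anything a bit cleaner.
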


\begin{proof}
First we give some bounds on $|\overline{\partial}\Psi_{f}(t,\eta)|$. By our assumptions on $f$, we have
\begin{equation}
	|f'(t+\eta)-f'(t)|\leq \min(C_1\eta^\alpha, C_2 |t|^{-(1+\beta)}) \leq c \eta^{\alpha\sigma}|t|^{-(1+\beta)(1-\sigma)},
\end{equation}
for any $\sigma \in (0,1)$ and $c>0$ a constant independent of $t$ and $\eta$. Hence by construction we have the bound
\begin{equation}
 |\overline{\partial}\Psi_f(t,\eta)| \leq c\eta^{\alpha\sigma}|t|^{-(1+\beta)(1-\sigma)} \label{psibound}
\end{equation}
for large $|t|$ and small $\eta$. We proceed by splitting the integration in \eqref{limB} and \eqref{limK} into the regions $D_{\mathrm{bulk}} = [0,1] \times \{\tau : |E+\tau/d_{N}|\leq 2\}$ and $D_{\mathrm{out}} = [0,1] \times \{\tau : 2 \leq |E+\tau/d_{N}| \leq 4$. Starting with region $D_{\mathrm{bulk}}$, the variance bound of Proposition \ref{prop:bulkBound} is applicable and we see that $\mathbb{E}|V_{N}(t+i\eta)|^{1+\delta} \leq c_1\eta^{-(1+\delta)}$ uniformly in $t$. Combined with \eqref{psibound} we see that the integrands of \eqref{limB} and \eqref{limK} restricted to $D_{\mathrm{bulk}}$ are dominated by an integrable function and the proof proceeds as in the compactly supported case of Theorem \ref{thm:c2c}.

It remains to bound the contribution to the integrals on the domain $D_{\mathrm{out}}$. Here we can exploit the decay of the test function $f$ and show that the inner $\limsup$ over $N$ will already be zero in \eqref{limB} and \eqref{limK}. Therefore it suffices to take $B=\emptyset$ and $K=1$. Then the variance bound of Proposition \ref{prop:sw} applies, yielding
\begin{equation}
\begin{split}
	&\int_{D_{\mathrm{out}}}\mathbb{E}(|V_{N}(z)|^{1+\delta})|\overline{\partial} \Psi_{f}(t,\eta)|^{1+\delta}\,dt\,d\eta\\ 
	&\leq cd_{N}^{\epsilon(1+\delta)}\int_{D_{\mathrm{out}}}\, \eta^{(-1-\epsilon)(1+\delta)+\sigma \alpha}|t|^{(-1-\beta)(1-\sigma)(1+\delta)}\,dt\,d\eta\\
	&\leq cd_{N}^{\epsilon(1+\delta)-(1+\beta)(1-\sigma)(1+\delta)+1}
\label{intdout}
\end{split}
\end{equation}
where we choose $\epsilon$ and $\delta$ so small that the integral over $\eta$ is finite while the integral over $t$ goes to zero as $N \to \infty$. Indeed, if $\epsilon < \sigma\alpha/3$ and $\delta < \sigma\alpha/3$ then $(-1-\epsilon)(1+\delta)+\sigma\alpha > -1$ and the $\eta$ integral is finite. In the integral over $t$, we choose $\sigma < \delta/(1+\delta)$, $\epsilon < \beta/(1+\delta)$ and deduce that $\epsilon(1+\delta)-(1+\beta)(1-\sigma)(1+\delta)+1<0$. To make the bounds work simultaneously we take $\epsilon < \min\{\sigma\alpha/3,\beta/(1+\delta)\}$. We conclude that the limit of \eqref{intdout} is zero. To prove Proposition \ref{prop:addsupp} notice that the integrand is supported on $D_{\mathrm{out}}$ with the same regularity conditions on $f$. Hence an identical calculation to that given in \eqref{intdout} shows that \eqref{addsuppint} converges to zero in $L^1$ as $N \to \infty$. This completes the proof of Propositions \ref{prop:tight2} and \ref{prop:addsupp}. Consequently, by means of \ref{thm:weakconvInt}, this also completes the proof of Theorem \ref{thm:c2csupp}.
\end{proof}

\begin{corollary}
\label{cor:arzasc}
The sequence of stochastic processes $V_{N}(z)$ with $z \in \mathbb{H}$ is tight in the space of continuous functions on any $N$-independent rectangle in the upper half-plane $\mathbb{H}$.
\end{corollary}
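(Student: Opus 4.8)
The plan is to deduce tightness from analyticity of the sample paths of $V_{N}$ together with the uniform second-moment bound of Proposition \ref{prop:bulkBound}, via the Arzel\`a--Ascoli characterisation of tightness in the space of continuous functions.

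First I would fix an arbitrary $N$-independent closed rectangle $D = [a,b] \times [c,d] \subset \mathbb{H}$ (so $c > 0$) and enlarge it slightly to an open rectangle $D'$ with $\overline{D} \subset D' \subset \overline{D'} \subset \mathbb{H}$, say $D' = (a-\epsilon_{0}, b+\epsilon_{0}) \times (c-\epsilon_{0}, d+\epsilon_{0})$ with $0 < \epsilon_{0} < c/2$, so that $\im(z) \geq c/2 =: \delta$ throughout $D'$. The key structural observation is that $z \mapsto V_{N}(z)$ is analytic on $\mathbb{H}$: for every $z \in \mathbb{H}$ the number $E + z/d_{N}$ has strictly positive imaginary part, so $G(E + z/d_{N})$ is well defined and analytic in $z$, and hence each $V_{N}$ is a random element of $\mathcal{U}(D)$. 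Moreover, since $E \in (-2,2)$ is fixed while $\Re(z)$ stays in the bounded interval $[a-\epsilon_{0}, b+\epsilon_{0}]$, for all large $N$ the real part $E + \Re(z)/d_{N}$ lies in a fixed compact subset of $(-2,2)$, and $\im(z)/d_{N} \geq \delta/d_{N} \gg N^{-1}$ since $d_{N} = o(N)$. Hence Proposition \ref{prop:bulkBound} applies on $D'$ and gives
\begin{equation}
\sup_{N \geq N_{0}}\ \sup_{z \in D'}\ \mathbb{E}|V_{N}(z)|^{2} \leq C(D').
\end{equation}

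Next I would transfer this bound to the derivative $V_{N}'$. Applying the interior estimate $\norm{g'}_{L^{\infty}(D)} \leq C(D,D')\,\norm{g}_{L^{1}(D')}$ for analytic functions pathwise with $g = V_{N}$, followed by Cauchy--Schwarz, one gets
\begin{equation}
\sup_{N \geq N_{0}}\ \mathbb{E}\Big(\sup_{z \in D}|V_{N}'(z)|\Big)\ \leq\ C(D,D')\int_{D'}\mathbb{E}|V_{N}(w)|\,dA(w)\ \leq\ C'(D,D') < \infty.
\end{equation}
With these two bounds in hand, tightness in $\mathcal{U}(D)$ follows from Arzel\`a--Ascoli: tightness of the one-dimensional marginals is immediate from the $L^{2}$-bound and Markov's inequality (or from Proposition \ref{prop:findim}), while stochastic equicontinuity follows from $|V_{N}(z) - V_{N}(z')| \leq |z - z'|\,\sup_{\zeta \in D}|V_{N}'(\zeta)|$ and Markov's inequality, which give $\mathbb{P}(\sup_{|z - z'| \leq \rho}|V_{N}(z) - V_{N}(z')| > \epsilon) \leq \rho\,\epsilon^{-1}\,C'(D,D') \to 0$ as $\rho \to 0$, uniformly in $N \geq N_{0}$; the finitely many $N < N_{0}$ are each a.s.\ continuous, hence individually tight.

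I do not expect any serious obstacle here, as all the analytic work has been done upstream. The only delicate point is that Proposition \ref{prop:bulkBound} controls $V_{N}$ only inside the bulk and for spectral imaginary part of order at least $N^{-1}$; this is precisely why it is essential that $D$ be a fixed rectangle strictly inside $\mathbb{H}$ and that $E$ be strictly inside $(-2,2)$, so that for large $N$ the arguments $E + z/d_{N}$ remain uniformly away from both the spectral edges and the real axis and the uniform variance bound is available.
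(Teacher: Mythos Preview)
Your argument is correct. It is, however, a genuinely different route from the paper's. The paper verifies a Kolmogorov-type moment bound
\[
\mathbb{E}|V_{N}(z_{1})-V_{N}(z_{2})|^{2}\leq C\,|z_{1}-z_{2}|^{2}
\]
by writing the increment as $(z_{1}-z_{2})\,\tilde{X}^{\mathrm{meso}}_{N}(h)$ with $h(x)=((x-z_{1})(x-z_{2}))^{-1}$ and then bounding $\mathbb{E}|\tilde{X}^{\mathrm{meso}}_{N}(h)|^{2}$ via the Helffer--Sj\"ostrand machinery of Section~\ref{se:testfn}. Your approach instead exploits analyticity directly: Cauchy's interior estimate converts the pointwise second-moment bound of Proposition~\ref{prop:bulkBound} on a slightly larger rectangle into a uniform bound on $\mathbb{E}\sup_{D}|V_{N}'|$, which yields equicontinuity without invoking any of the linear-statistics apparatus. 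Your route is more elementary and self-contained; it also sidesteps the fact that in two real dimensions the moment estimate $\mathbb{E}|V_{N}(z_{1})-V_{N}(z_{2})|^{2}\leq C|z_{1}-z_{2}|^{2}$ is borderline for the standard Kolmogorov tightness criterion (the paper's argument implicitly leans on analyticity, or on upgrading to higher moments, to close this). The paper's route, on the other hand, makes the corollary a direct application of the variance bounds already established for general test functions.
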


\begin{proof}
It suffices to verify the Arzela-Ascoli criterion:
\begin{equation}
\mathbb{E}|V_{N}(z_1)-V_{N}(z_2)|^{2} \leq C|z_1-z_2|^{2},
\end{equation}
for $z_1$ and $z_2$ in some $N$-independent rectangle in $\mathbb{H}$ and $C$ a constant depending only on the vertices of the rectangle (i.e. not on $N$). To prove this, note that
\begin{equation}
\frac{1}{x-u_1-iv_1}-\frac{1}{x-u_2-iv_2} = \frac{(u_1-u_2)+i(v_1-v_2)}{(x-u_1-iv_1)(x-u_2-iv_2)}
\end{equation}
implies
\begin{equation}
\mathbb{E}|V_{N}(z_1)-V_{N}(z_2)|^{2} = |z_1-z_2|^{2}\mathbb{E}|\tilde{X}^{\mathrm{meso}}_{N}(h)|^{2}
\end{equation}
where $h(x) = ((x-u_1-iv_1)(x-u_2-iv_2))^{-1}$ is a smooth function with decay $h(x) = O(|x|^{-2})$ as $|x| \to \infty$. Hence the techniques of the present subsection are applicable with $\alpha=\beta=1$. Indeed, after replacing $h$ with a smooth cut-off $h_{N}$ as in Lemma \ref{lem:cutofflemma}, an application of formula \eqref{genint} followed by Cauchy-Schwarz leads to $\mathbb{E}|\tilde{X}^{\mathrm{meso}}_{N}(v)|^{2} \leq I_{v}^{2}$ where
\begin{equation}
I_{v} := \frac{1}{\pi}\int_{0}^{1}\int_{-\infty}^{\infty}\,\sqrt{\mathbb{E}|V_{N}(\tau+i\eta)|^{2}}\,|\overline{\partial}\Psi_{h_{N}}(\tau,\eta)|\,d\tau\,d\eta
\end{equation}
Following the proof of Proposition \ref{prop:tight2} we easily deduce that $I_{v}$ is uniformly bounded in $N$ for fixed $u_1,v_1,u_2,v_2 \in \mathbb{H}$.
\end{proof}

\appendix
\renewcommand{\thelemma}{\Alph{section}\arabic{lemma}}
\section{Weak Convergence results}
\begin{theorem}[Martingale Central Limit Theorem]{\cite[Theorem 35.12]{billingsley}}\label{thm:martclt}
	Let $M_{k,N}$, $1\leq k \leq N$, $N \geq 1$ be a sequence of zero-mean, square-integrable martingales adapted to the filtration $\mathcal{F}_{k,N}$, and let $\mathcal{F}_{0,N}$ denote the trivial $\sigma$-field. Let $Y_{k,N}$ denote the martingale difference sequence $Y_{k,N} = M_{k,N} - M_{k-1,N}$ where $1 \leq k \leq N$. Suppose the following conditions hold
	\begin{align}
		\text{for all $\epsilon > 0$} \quad \sum_{k=1}^N \mathbb{E}[Y_{k,N}^2 \mathbf{1}_{|Y_{k,N}|>\epsilon}|\mathcal{F}_{k-1,N}] \to 0 \quad \text{in probability}, \\
		\sum_{k = 1}^N \mathbb{E}[ Y_{k,N}^2 | \mathcal{F}_{k-1,N}] \to \sigma^2 \quad \text{in probability},
	\end{align}
	where $\sigma^2$. Then $M_{N,N}$ converges in distribution to a Gaussian random variable with mean 0 and variance $\sigma^2$. 
\end{theorem}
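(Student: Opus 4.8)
The statement is the classical martingale central limit theorem, \cite[Theorem 35.12]{billingsley}, so one may simply quote that reference; for completeness I outline the argument I would give. By the L\'evy continuity theorem it suffices to fix $t\in\mathbb{R}$ and show $\mathbb{E}\big[e^{itM_{N,N}}\big]\to e^{-\sigma^{2}t^{2}/2}$. First I would record two routine consequences of the Lindeberg hypothesis. \emph{(i)} $\max_{1\le k\le N}|Y_{k,N}|\to 0$ in probability, because
\begin{equation}
\mathbb{P}\big(\max_{k}|Y_{k,N}|>\epsilon\big)\le\epsilon^{-2}\,\mathbb{E}\Big[\sum_{k}\mathbb{E}\big[Y_{k,N}^{2}\mathbf{1}_{|Y_{k,N}|>\epsilon}\,\big|\,\mathcal{F}_{k-1}\big]\Big],
\end{equation}
and the inner sum is dominated by $\sum_{k}\mathbb{E}[Y_{k,N}^{2}\mid\mathcal{F}_{k-1}]$, hence uniformly integrable by the conditional-variance hypothesis and therefore tends to $0$ in $L^{1}$. \emph{(ii)} $\sum_{k}Y_{k,N}^{2}\to\sigma^{2}$ in probability: the difference $\sum_{k}\big(Y_{k,N}^{2}-\mathbb{E}[Y_{k,N}^{2}\mid\mathcal{F}_{k-1}]\big)$ is a sum of martingale differences whose second moment, after truncating $Y_{k,N}$ at a fixed level $\epsilon$, is at most $\epsilon^{2}\sum_{k}\mathbb{E}[Y_{k,N}^{2}]$ plus a Lindeberg remainder, so it vanishes on letting $N\to\infty$ and then $\epsilon\downarrow 0$.

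The core of the proof is McLeish's comparison argument. Put $T_{N}:=\prod_{k=1}^{N}(1+itY_{k,N})$; since $\mathbb{E}[Y_{k,N}\mid\mathcal{F}_{k-1}]=0$, the sequence $(T_{k})_{1\le k\le N}$ is a martingale and $\mathbb{E}[T_{N}]=1$. Using $1+ix=\exp\!\big(ix+\tfrac12 x^{2}+r(x)\big)$ with $|r(x)|\le|x|^{3}$ for $|x|\le\tfrac12$, on the event $\{\max_{k}|tY_{k,N}|\le\tfrac12\}$ one has the exact identity
\begin{equation}
e^{itM_{N,N}}=T_{N}\,\exp\!\Big(-\tfrac12 t^{2}\sum_{k}Y_{k,N}^{2}-R_{N}\Big),
\end{equation}
where $|R_{N}|\le|t|^{3}\big(\max_{k}|Y_{k,N}|\big)\sum_{k}Y_{k,N}^{2}$. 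By \emph{(i)}--\emph{(ii)} this event has probability tending to $1$, $R_{N}\to 0$ in probability, and $\tfrac12 t^{2}\sum_{k}Y_{k,N}^{2}\to\tfrac12 t^{2}\sigma^{2}$ in probability, whence $e^{itM_{N,N}}=e^{-\sigma^{2}t^{2}/2}\,T_{N}\,\big(1+o_{\mathbb{P}}(1)\big)$, and it remains only to take expectations.

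The step I expect to be the main obstacle is this passage to expectations, because $T_{N}$ need not even be integrable: one only has $|T_{N}|\le\exp(\tfrac12 t^{2}\sum_{k}Y_{k,N}^{2})$, with no a priori moment bound on the right-hand side. The remedy, which is the genuine technical work, is to truncate at the outset --- replace each $Y_{k,N}$ by $Y_{k,N}\mathbf{1}_{|Y_{k,N}|\le\epsilon}$, recenter to restore the martingale property, and stop the index at the first $k$ for which the running sum of squares exceeds $\sigma^{2}+1$. By \emph{(i)}--\emph{(ii)} these modifications do not affect the relevant limits (they change $M_{N,N}$ by $o_{\mathbb{P}}(1)$ and change the product only on an event of vanishing probability), while the modified product $\widetilde T_{N}$ is now a bounded martingale, hence uniformly integrable with $\mathbb{E}[\widetilde T_{N}]=1$. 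Substituting into the displayed identity gives $\mathbb{E}[e^{itM_{N,N}}]=e^{-\sigma^{2}t^{2}/2}+o(1)$, and the L\'evy continuity theorem concludes. An alternative that avoids $T_{N}$ is to telescope $\prod_{k}e^{itY_{k,N}}$ directly against $\prod_{k}\exp(-\tfrac12 t^{2}\mathbb{E}[Y_{k,N}^{2}\mid\mathcal{F}_{k-1}])$ and estimate each slot by conditioning on $\mathcal{F}_{k-1}$ together with a second-order Taylor expansion controlled by the Lindeberg condition; there the subtle point is instead that the trailing factors are not $\mathcal{F}_{k-1}$-measurable, which again has to be absorbed by a truncation.
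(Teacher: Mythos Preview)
The paper does not supply a proof of this theorem at all: it is recorded in the appendix purely as a citation to Billingsley, \cite[Theorem~35.12]{billingsley}, and is used as a black box in Section~\ref{se:res}. So there is no ``paper's own proof'' to compare against; quoting the reference, as you do in your first sentence, is already what the authors do.

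Your outline is a faithful sketch of the standard characteristic-function proof (essentially McLeish's argument, which is also how Billingsley proceeds). The identification of the genuine technical point --- that $T_N=\prod_k(1+itY_{k,N})$ is a mean-one martingale but not a priori uniformly integrable, and that this is repaired by truncating the increments and stopping once the running quadratic variation exceeds $\sigma^2+1$ --- is exactly right and is the heart of the matter. One small remark on your step~\emph{(i)}: to pass from the Lindeberg hypothesis (convergence in probability) to convergence of the \emph{expectation} of $\sum_k\mathbb{E}[Y_{k,N}^2\mathbf{1}_{|Y_{k,N}|>\epsilon}\mid\mathcal{F}_{k-1}]$ you invoke uniform integrability via domination by $\sum_k\mathbb{E}[Y_{k,N}^2\mid\mathcal{F}_{k-1}]$, but convergence in probability of the latter to $\sigma^2$ does not by itself yield uniform integrability. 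This is a well-known wrinkle; the clean fix is to observe that $\{\max_k|Y_{k,N}|>\epsilon\}\subset\{\tau\le N\}$ for the stopping time $\tau=\min\{k:|Y_{k,N}|>\epsilon\}$ and bound $\mathbb{P}(\tau\le N)$ via the optional stopping/Lenglart-type inequality, or simply to carry out the same truncation-and-stopping that you already perform for $T_N$ before establishing \emph{(i)}--\emph{(ii)}. With that adjustment the sketch is complete.
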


\begin{theorem}{\cite[Theorem 1; Lemma 1]{cremers1986weak}}
	\label{thm:weakconvInt}
	Let $\Phi$ be a space of measurable functions $\phi(z,w):\mathbb{H}\times \mathbb{C} \to \mathbb{C}$ such that $\phi(z,\cdot)$ is continuous for all $z\in \mathbb{H}$. Let $M$ be a set of measurable functions $x : \mathbb{H} \to \mathbb{C}$ such that
	\begin{equation}
		\int |\phi(z,x(z))|\,dz < \infty,\quad \phi \in \Phi,
	\end{equation}
	and define the functional
	\begin{equation}
		\ell_\phi(\xi_N) = \int_{\mathbb{H}}\phi(z,\xi_N(z))\,dz.
	\end{equation}
	Suppose $\{\xi_N: N\in \mathbb{N}_0\}$ is a sequence of stochastic processes  $\xi_N(z) : \mathbb{H} \to \mathbb{C}$, with paths in $M$. If the finite dimensional distributions of $\xi_N$ converge weakly to those of $\xi_0$ Lebesgue almost everywhere in $\mathbb{H}$ and for all $\epsilon > 0$ and $\phi \in \Phi$:
	\begin{align}
		\lim_{K \to \infty} \limsup_{N \to \infty}\mathbb{P}\left(\int_{\mathbb{H}}(|\phi(z,\xi_N(z))|-K)^+\,dz \geq \epsilon \right) &= 0,\\
	\inf_{B\subset \mathbb{H}, \lambda(B)<\infty} \limsup_{N\to\infty}\mathbb{P}\left(\int_{\mathbb{H} - B}|\phi(z,\xi_N(z))|\,dz \geq \epsilon  \right) &= 0,
	\end{align}
where $\lambda$ is the Lebesgue measure on $\mathbb{H}$, then for $\phi_1,\dots, \phi_k \in \Phi$, we have that 
	\begin{equation}
		(\ell_{\phi_1}(\xi_N),\dots,\ell_{\phi_k}(\xi_N)) \Rightarrow (\ell_{\phi_1}(\xi_0),\dots, \ell_{\phi_k}(\xi_0)).
	\end{equation}
\end{theorem}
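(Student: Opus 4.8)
The plan is to deduce the statement from the convergence of the finite-dimensional distributions of $\xi_N$ by a truncation-and-localization argument that reduces everything to a \emph{bounded} functional of finitely many coordinates of $\xi_N$, to which the hypothesis applies directly.

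\textbf{Step 1 (reduction to a single functional).} Since $\Phi$ is a linear space --- and in any case the integrability and tightness hypotheses are inherited by finite linear combinations via the triangle inequality and a splitting of $\epsilon$ --- the Cram\'er--Wold device reduces the claim to showing $\ell_\phi(\xi_N) \Rightarrow \ell_\phi(\xi_0)$ for a fixed $\phi \in \Phi$, where $\ell_\phi(\xi_0)$ is finite almost surely because the paths of $\xi_0$ lie in $M$. By the L\'evy continuity theorem it suffices to prove $\mathbb{E}[e^{i t\,\ell_\phi(\xi_N)}] \to \mathbb{E}[e^{i t\,\ell_\phi(\xi_0)}]$ for every $t \in \mathbb{R}$; absorbing $t$ into $\phi$, we may take $t=1$.

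\textbf{Step 2 (truncation and localization).} Fix a continuous $\rho_K : [0,\infty) \to [0,1]$ with $\rho_K \equiv 1$ on $[0,K]$ and $\rho_K \equiv 0$ on $[2K,\infty)$, and set $\phi^K(z,w) := \phi(z,w)\,\rho_K(|\phi(z,w)|)$, so that $|\phi^K| \leq \min(|\phi|,2K)$ and $\phi^K(z,\cdot)$ is continuous for each $z$. For Borel $B \subset \mathbb{H}$ with $\lambda(B) < \infty$ put $\ell^{K,B}_\phi(x) := \int_B \phi^K(z,x(z))\,dz$; this is bounded, $|\ell^{K,B}_\phi| \leq 2K\lambda(B)$, and $\ell^{K,\mathbb{H}}_\phi(x)$ is finite for $x\in M$. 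From $|\phi - \phi^K| \leq |\phi|\,\mathbf{1}_{\{|\phi|>K\}} \leq 2(|\phi| - K/2)^{+}$ and the amplitude-truncation hypothesis one gets $\lim_{K\to\infty}\limsup_{N\to\infty}\mathbb{P}(|\ell_\phi(\xi_N) - \ell^{K,\mathbb{H}}_\phi(\xi_N)| \geq \epsilon) = 0$, while $|\ell^{K,\mathbb{H}}_\phi(x) - \ell^{K,B}_\phi(x)| \leq \int_{\mathbb{H}\setminus B}|\phi(z,x(z))|\,dz$ together with the tail hypothesis gives $\inf_{B}\limsup_{N\to\infty}\mathbb{P}(|\ell^{K,\mathbb{H}}_\phi(\xi_N) - \ell^{K,B}_\phi(\xi_N)| \geq \epsilon) = 0$. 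The analogous estimates hold with $\xi_0$ in place of $\xi_N$ (by passing to the limit in the uniformity over $N$, using the a.e.\ finite-dimensional convergence and Fatou's lemma at the level of finite sub-sums), so that $\ell^{K,B}_\phi(\xi_0) \to \ell_\phi(\xi_0)$ in probability as $B \uparrow \mathbb{H}$, $K \to \infty$. A standard converging-together argument for triangular arrays then shows that it is enough to establish, for each fixed $K$ and each fixed $B$ with $\lambda(B) < \infty$, the convergence $\ell^{K,B}_\phi(\xi_N) \Rightarrow \ell^{K,B}_\phi(\xi_0)$.

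\textbf{Step 3 (from coordinates to the integral, and the main obstacle).} Let $G \subset \mathbb{H}$ be the full-measure set on which the finite-dimensional distributions of $\xi_N$ converge to those of $\xi_0$. Partition $B$ into Borel pieces $B_1,\dots,B_m$ of small diameter and pick $z_j \in B_j \cap G$; the Riemann-type sum $\Sigma_m(x) := \sum_{j=1}^{m}\lambda(B_j)\,\phi^K(z_j,x(z_j))$ is a bounded continuous function of $(x(z_1),\dots,x(z_m))$, so finite-dimensional convergence and the continuous mapping theorem give $\Sigma_m(\xi_N) \Rightarrow \Sigma_m(\xi_0)$ for each fixed partition. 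Invoking the converging-together lemma once more, the proof is complete once one controls, uniformly in $N$ and as the mesh of the partition tends to $0$, the error $\mathbb{E}\big|\ell^{K,B}_\phi(\xi_N) - \Sigma_m(\xi_N)\big| = \mathbb{E}\big|\sum_{j}\int_{B_j}(\phi^K(z,\xi_N(z)) - \phi^K(z_j,\xi_N(z_j)))\,dz\big|$, and similarly for $\xi_0$. \emph{This last point is the heart of the matter:} pointwise finite-dimensional convergence alone does not control a Riemann-sum error, and one handles it by first approximating $\phi$ in the averaged sense built into the definition of $M$ by a function that is jointly (uniformly) continuous on $B \times \mathbb{C}$ --- using the amplitude truncation to pass to a setting where such density is available --- after which the needed uniformity in $N$ follows from $|\phi^K| \leq 2K$, $\lambda(B) < \infty$ and Fubini's theorem. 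I expect this regularization --- the passage from finite-dimensional convergence to convergence of the integral functional --- to be the only genuinely delicate step; in the application made in this paper the relevant $\phi$ have the special product form $\phi(z,w) = \overline{\partial}\Psi_g(z)\,w$ with $\overline{\partial}\Psi_g$ continuous and compactly supported in $\eta$, so the required regularity in the $z$-variable is immediate and the argument simplifies considerably.
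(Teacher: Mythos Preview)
The paper does not prove this theorem at all: it is quoted verbatim from Cremers--Kadelka \cite{cremers1986weak} in the appendix, with no argument supplied. So there is no ``paper's own proof'' to compare against, and your attempt should be judged on its merits.

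Your Steps~1 and~2 are standard and correct; in particular the reduction via the two uniform-integrability hypotheses to the truncated, localized functional $\ell^{K,B}_\phi$ is exactly what one wants. (For $\xi_0$ you do not need the Fatou-type argument you sketch: since $\xi_0$ has paths in $M$ by hypothesis, $\int_{\mathbb H}|\phi(z,\xi_0(z))|\,dz<\infty$ almost surely, and $\ell^{K,B}_\phi(\xi_0)\to\ell_\phi(\xi_0)$ follows by dominated convergence.)

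Step~3, however, has a real gap. Your Riemann-sum scheme requires
\[
\lim_{m\to\infty}\ \limsup_{N\to\infty}\ \mathbb{P}\bigl(|\ell^{K,B}_\phi(\xi_N)-\Sigma_m(\xi_N)|>\epsilon\bigr)=0,
\]
and you propose to obtain this by approximating $\phi^K$ by a function jointly uniformly continuous on $B\times\mathbb{C}$. But even for such a $\psi$, the difference $\psi(z,\xi_N(z))-\psi(z_j,\xi_N(z_j))$ is controlled only through $|z-z_j|$ \emph{and} $|\xi_N(z)-\xi_N(z_j)|$, and nothing in the hypotheses gives any modulus of continuity for the paths $z\mapsto\xi_N(z)$. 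The two tightness conditions in the statement restrain large values and spatial tails of $|\phi(z,\xi_N(z))|$, not its local oscillation; they do not make the Riemann sums converge uniformly in $N$. Your closing remark about the product form $\phi(z,w)=\overline{\partial}\Psi_g(z)\,w$ does not rescue the argument either, since the error still contains $\xi_N(z)-\xi_N(z_j)$.

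The clean fix bypasses Riemann sums entirely. After truncation and localization, $|\ell^{K,B}_\phi(\xi_N)|\le 2K\lambda(B)=:L$, so the law is determined by its moments, and by Fubini
\[
\mathbb{E}\bigl[\ell^{K,B}_\phi(\xi_N)^k\bigr]
=\int_{B^k}\mathbb{E}\Bigl[\prod_{j=1}^k \phi^K(z_j,\xi_N(z_j))\Bigr]\,dz_1\cdots dz_k.
\]
For Lebesgue-a.e.\ $(z_1,\dots,z_k)$ the integrand converges to the corresponding expression with $\xi_0$ (finite-dimensional convergence against the bounded continuous test function $\prod_j\phi^K(z_j,\cdot)$), and it is dominated by $(2K)^k$; dominated convergence then gives convergence of every moment, hence of the characteristic function (the exponential series being absolutely and uniformly summable on $[-L,L]$). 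This is presumably the mechanism behind the cited Lemma~1 of \cite{cremers1986weak}, and it makes Step~3 routine once Steps~1--2 are in place.
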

\section{Concentration inequalities and bounds on the resolvent}
In this section we record some important bounds required in Section \ref{se:res}. We remind the reader the notation used in that Section 
\begin{equation}
	\delta^{k,n}_{N}(z) := h_{k}^{\dagger}G_{k}(z)^{n}h_{k}-N^{-1}\tr(G_{k}(z_{1})^{n}) \label{deltaknapp}
\end{equation}
where $h_{k}$ denotes the $k^{\mathrm{th}}$ column of $\mathcal{H}$ with $k^{\mathrm{th}}$ element removed. The matrix $\mathcal{H}_{k}$ is defined as the matrix $\mathcal{H}$ with the $k^{\mathrm{th}}$ row and column erased and $G_{k}(z) := (\mathcal{H}_{k}-z)^{-1}$ is the resolvent of $\mathcal{H}_{k}$.
\begin{lemma}
\label{d1bound}
Let $z = E+\frac{\tau+i\eta}{d_{N}}$. Then for fixed $\eta>0$ and $\tau \in \mathbb{R}$ and all $q\geq 1$, there are positive constants $c_1,c_2$ such that
	\begin{align}
		&\mathbb{E}|\delta^{k,1}_{N}(z)|^{q} \leq c_1\left(\frac{N\eta}{d_N}\right)^{-\frac{q}{2}} \label{hswright}\\
		&\mathbb{E}\left|\frac{1}{z + \frac{1}{N}\tr(G_k(z))}\right|^{q} \leq c_2 \label{boundedexps}
	\end{align}
	for all $k$, $N$. 
\end{lemma}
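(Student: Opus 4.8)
The plan is to derive \eqref{hswright} from the standard moment estimate for quadratic forms in independent random variables, and then to obtain \eqref{boundedexps} from \eqref{hswright} together with the elementary lower bound $\im\big(z+N^{-1}\tr G_k(z)\big)\ge\im(z)=\eta/d_N$. For \eqref{hswright} I would first condition on $\mathcal H_k$, so that the column $h_k=(N^{-1/2}W_{ik})_{i\neq k}$ is independent of $G_k:=G_k(z)$ and has i.i.d.\ centered entries with second moment $N^{-1}$, vanishing pseudovariance, and sub-Gaussian tails. Writing $\delta^{k,1}_N(z)=N^{-1}\big(w^{\dagger}G_k w-\tr G_k\big)$ with $w:=(W_{ik})_{i\neq k}$, the moment form of the Hanson--Wright inequality (see, e.g., Lemma~B.26 in \cite{BS10}, applied conditionally on $\mathcal H_k$) gives, for $q\ge 2$,
\[
\mathbb{E}\big[\,|\delta^{k,1}_N(z)|^{q}\mid\mathcal H_k\,\big]\le\frac{C_q}{N^{q}}\Big(\big(\tr(G_kG_k^{*})\big)^{q/2}+\mathbb{E}|W_{12}|^{2q}\,\tr\big((G_kG_k^{*})^{q/2}\big)\Big).
\]
The second term is of lower order because $\tr\big((G_kG_k^{*})^{q/2}\big)\le\norm{G_k}^{q-2}\tr(G_kG_k^{*})\le(d_N/\eta)^{q-2}\tr(G_kG_k^{*})$, and $\mathbb{E}|W_{12}|^{2q}$ is finite by Definition~\ref{def:wig}. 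For the first term the decisive identity is $\tr(G_kG_k^{*})=\im\tr G_k(z)/\im(z)=(d_N/\eta)\,\im\tr G_k(z)$, which improves the crude deterministic bound $\tr(G_kG_k^{*})\le(N-1)d_N^{2}/\eta^{2}$ by a full factor $d_N/\eta$. Writing $\im\tr G_k(z)=(N-1)\im s(z)+\sum_p\im\big((G_k(z))_{pp}-s(z)\big)$, using that $0<\im s(z)\le|s(z)|$ is bounded for $z$ near the fixed base point $E$, and invoking the diagonal resolvent estimate of Lemma~\ref{le:offdiag} with Minkowski's inequality, one obtains $\mathbb{E}\big(\im\tr G_k(z)\big)^{q/2}\le C_q N^{q/2}$; substituting back yields $\mathbb{E}|\delta^{k,1}_N(z)|^{q}\le c_1(N\eta/d_N)^{-q/2}$ for all $q\ge 2$ with $c_1$ independent of $N$ and $k$, and the range $1\le q<2$ then follows from Jensen's inequality.

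For \eqref{boundedexps} I would set $A_N:=z+N^{-1}\tr G_k(z)$ and $A_0:=z+s(z)$, and use the self-consistent relation $s(z)^{2}+zs(z)+1=0$ to write $A_0=-1/s(z)$, so that $|A_0|$ is bounded above and below for $z$ near $E$. The difference $A_N-A_0=N^{-1}\sum_p\big((G_k(z))_{pp}-s(z)\big)$ satisfies $\mathbb{E}|A_N-A_0|^{m}\le C_m (d_N/(\eta N))^{m/2}$ for every $m$, again by Lemma~\ref{le:offdiag} and Minkowski. On the event $\{|A_N-A_0|\le|A_0|/2\}$ one has $|1/A_N|\le 2|s(z)|$, which is bounded; on the complementary event I would use the deterministic bound $|1/A_N|\le 1/\im(A_N)\le 1/\im(z)=d_N/\eta$, valid because $\im\tr G_k(z)\ge 0$, together with $\mathbb{P}(|A_N-A_0|>|A_0|/2)\le C_m(d_N/(\eta N))^{m/2}$ from Markov's inequality. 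This gives $\mathbb{E}|1/A_N|^{q}\le C+(d_N/\eta)^{q}C_m(d_N/(\eta N))^{m/2}$, and choosing $m$ large enough (recall $d_N=N^{\gamma}$ with $\gamma<1$) makes the second term $o(1)$, so $\mathbb{E}|1/A_N|^{q}\le c_2$ uniformly in $N$ and $k$.

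The one genuinely non-trivial ingredient, which I expect to be the main obstacle, is the estimate $\mathbb{E}\big(\im\tr G_k(z)\big)^{q/2}\le C_q N^{q/2}$ used in Step~1: the purely deterministic bound $\im\tr G_k(z)\le(N-1)\,d_N/\eta$ is off by precisely the factor $d_N/\eta$ that would render \eqref{hswright} divergent on mesoscopic scales, so one is forced to feed in the local semicircle law through Lemma~\ref{le:offdiag}, which shows that $\im\tr G_k(z)$ concentrates around $(N-1)\im s(z)=O(N)$. Everything else is routine bookkeeping with Cauchy--Schwarz, Jensen and Markov.
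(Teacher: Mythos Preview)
Your proposal is correct and follows essentially the same strategy as the paper. For \eqref{hswright} the paper simply cites Proposition~3.2 of \cite{CMS14} as a consequence of Hanson--Wright, whereas you spell out the argument: moment form of Hanson--Wright plus the Ward identity $\tr(G_kG_k^{*})=(d_N/\eta)\im\tr G_k$ plus the diagonal local law. Your remark that the deterministic bound on $\tr(G_kG_k^{*})$ is off by exactly $d_N/\eta$ and must be upgraded via the local law is precisely the point.

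For \eqref{boundedexps} the two proofs differ tactically but use the same ingredients. The paper writes a finite geometric expansion
\[
\frac{1}{z+s(z)+\Lambda_k}=\sum_{i=0}^{p}\frac{(-\Lambda_k)^{i}}{(z+s(z))^{i+1}}+\frac{(-\Lambda_k)^{p+1}}{(z+s(z))^{p+1}(z+N^{-1}\tr G_k)},
\]
bounds the polynomial part using $|z+s(z)|^{-1}\le 1$ and moment bounds on $\Lambda_k$ from rigidity, and bounds the remainder using the deterministic $|z+N^{-1}\tr G_k|^{-1}\le d_N/\eta$ together with a high enough power of $\Lambda_k$ to kill the factor $d_N/\eta$. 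Your event-splitting argument achieves the same trade-off: on the good event you inherit the bound $2|s(z)|$, on the bad event you pair the deterministic bound $d_N/\eta$ with a high-moment Markov bound on $\Lambda_k$. Both approaches are standard and interchangeable; the paper's telescoping formula has the slight advantage of being an identity that is reused later (e.g.\ in simplifying the covariance kernel), while your version is arguably cleaner as a standalone proof. Note also that the paper uses the trace-level rigidity $\mathbb{E}|\Lambda_k|^{m}\le C(N\eta/d_N)^{-m}$ directly rather than summing the entry-wise bound via Minkowski, which gives a slightly better rate $(d_N/(\eta N))^{m}$ instead of your $(d_N/(\eta N))^{m/2}$; this is immaterial here since either suffices once $m$ is chosen large.
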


\begin{proof}
Inequality \eqref{hswright} can be found in Proposition 3.2 of \cite{CMS14} as a consequence of the Hanson-Wright large deviation inequality \cite{RV13}. Such concentration inequalities appear in several other works concerning the local semi-circle law for Wigner matrices, see \textit{e.g.} \cite{ESY08,ESY09,EYY12,EYY12rig}. To prove \eqref{boundedexps}, set $\tilde{b}_{k} := (z+N^{-1}\mathrm{Tr}(G_{k}))^{-1} = (z+s(z) + \Lambda_{k})^{-1}$ where $\Lambda_{k} = N^{-1}\mathrm{Tr}(G_{k})-s(z)$. Now we use the elementary identity
\begin{equation}
\label{taylorbk}
\tilde{b}_{k}:=\frac{1}{z+s(z) +\Lambda_{k}} = \sum_{i=0}^{p}\frac{\Lambda_{k}^{i}}{(z+s(z))^{i+1}} + \frac{\Lambda_{k}^{p+1}}{(z+s(z))^{p}(z+N^{-1}\mathrm{Tr}(G_{k}))}
\end{equation}
Then it is known that $|z+s(z)|^{-1} \leq 1$ uniformly on the upper-half plane $z \in \mathbb{C}^{+}$ (see \textit{Eq. 8.1.19} in \cite{BS10}). Furthermore, we have the trivial bound $|z+N^{-1}\mathrm{Tr}(G_{k})|^{-1} \leq (\eta/d_N+N^{-1}\mathrm{Im}\mathrm{Tr}(G_{k}))^{-1} \leq d_N/\eta$. This gives us
\begin{equation}
	|\tilde{b}_{k}| \leq \sum_{i=0}^{p}|\Lambda_{k}|^{i}+\frac{d_N}{\eta}|\Lambda_{k}|^{p+1}
\end{equation}
On the other hand, by rigidity arguments we know that $\mathbb{E}|\Lambda_{k}|^{q} \leq C'(N\eta/d_N)^{-q}$, so for example $\mathbb{E}|\tilde{b}_{k}| \leq 1+O((N\eta/d_N)^{-1})+O((d_N/\eta)(N\eta/d_N)^{-p-1})$. This last error term can be made $o(1)$ by choosing $p$ large enough (setting $d_N=N^{\alpha}$, one finds the condition $p+1 > \alpha/(1-\alpha)$)
\end{proof}

\begin{lemma}
\label{lem:derivbound}
	Let $z = E + \frac{\tau + i \eta}{d_N}$. For all $q \geq 1$ with fixed $\eta>0$ and $\tau \in \mathbb{R}$, there are positive constants $c,C$ such that
	\begin{align}
		&\mathbb{E}|d_N^{-1}\delta_N^{k,2}(z)|^{q} < C\left(\frac{N}{d_N}\right)^{-q/2}\eta^{-\frac{3q}{2}},\\
		&\mathbb{E}\left|\frac{\eta}{d_N} \tr(G_{k}^{2}(z))\right|^{q} \leq c\max\left(\frac{\eta}{d_N},\left(\frac{N\eta}{d_N}\right)^{-1}\right)^{q},
	\end{align}
	for all $k$, $N$.
\end{lemma}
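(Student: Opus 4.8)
The plan is to prove the two estimates separately, in each case reducing to a quantity already controlled by the local semicircle law (Lemma \ref{le:offdiag} and Theorem \ref{thm:optimalbound}).

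\textbf{First estimate.} I would condition on the minor $\mathcal{H}_k$. Since the column $h_k$ is independent of $\mathcal{H}_k$ and its entries satisfy $\mathbb{E}[\overline{(h_k)_i}(h_k)_j]=N^{-1}\delta_{ij}$ and $\mathbb{E}[(h_k)_i(h_k)_j]=0$ (using $\mathbb{E}|W_{ik}|^2=1$, $\mathbb{E}W_{ik}^2=0$), one has $\mathbb{E}[h_k^\dagger G_k(z)^2 h_k \mid \mathcal{H}_k]=N^{-1}\tr G_k(z)^2$, so $\delta^{k,2}_N(z)$ is a centred quadratic form in $h_k$ with matrix $G_k(z)^2$. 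Applying the Hanson--Wright inequality conditionally on $\mathcal{H}_k$ (exactly as in the proof of Lemma \ref{d1bound}, using the sub-Gaussian tails from Definition \ref{def:wig}) gives
\[
\mathbb{E}\big[\,|\delta^{k,2}_N(z)|^q \mid \mathcal{H}_k\,\big]\;\le\;(Cq)^{cq}\,N^{-q}\Big(\norm{G_k(z)^2}_{\mathrm{HS}}^q+\norm{G_k(z)^2}^q\Big).
\]
The operator norm is bounded deterministically by $\norm{G_k(z)}^2\le(\im z)^{-2}=(d_N/\eta)^2$. For the Hilbert--Schmidt norm, writing $\mu_i$ for the eigenvalues of $\mathcal{H}_k$,
\[
\norm{G_k(z)^2}_{\mathrm{HS}}^2=\sum_i|\mu_i-z|^{-4}\le(\im z)^{-2}\sum_i|\mu_i-z|^{-2}=(\im z)^{-3}\,\im\tr G_k(z),
\]
and Lemma \ref{le:offdiag} (summed over the diagonal entries, via Minkowski) gives $N^{-1}\tr G_k(z)=s(z)+O_{L^q}((d_N/(N\eta))^{1/2})$; since $|s(z)|\le1$ this yields $\mathbb{E}[(\im\tr G_k(z))^{q/2}]\le CN^{q/2}$, hence $\mathbb{E}\norm{G_k(z)^2}_{\mathrm{HS}}^q\le C(d_N/\eta)^{3q/2}N^{q/2}$. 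Substituting both bounds,
\[
\mathbb{E}\big|d_N^{-1}\delta^{k,2}_N(z)\big|^q\le (Cq)^{cq}\Big[(N/d_N)^{-q/2}\eta^{-3q/2}+(N/d_N)^{-q}\eta^{-2q}\Big],
\]
and for fixed $\eta$ and $N$ large the second term is dominated by the first; uniformity in $k$ is immediate since all constants are $k$-independent.

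\textbf{Second estimate} (understood with the normalising factor $N^{-1}$ in front of $\tr G_k(z)^2$, as in \eqref{globalprob}). Here I would use $N^{-1}\tr G_k(z)^2=\partial_z[N^{-1}\tr G_k(z)]$ together with Cauchy's integral formula over the circle $\Gamma=\{|\omega-z|=\eta/(2d_N)\}$, which stays in the upper half-plane with $\im\omega\asymp\eta/d_N$. Theorem \ref{thm:optimalbound}, applied to the $(N-1)\times(N-1)$ minor $\mathcal{H}_k$, gives $\norm{N^{-1}\tr G_k(\omega)-s(\omega)}_{L^q}\le C\,d_N/(N\eta)$ uniformly for $\omega\in\Gamma$; combining this with $|\omega-z|^{-2}=4(d_N/\eta)^2$ on $\Gamma$ and Minkowski's integral inequality yields
\[
N^{-1}\tr G_k(z)^2=s'(z)+O_{L^q}\!\big(d_N^2/(N\eta^2)\big).
\]
Since $s'(z)$ is bounded for $\re z$ in the bulk (recall $E\in(-2+\delta,2-\delta)$ and $\im z$ small), multiplying through by $\eta/d_N$ gives $\tfrac{\eta}{d_N}N^{-1}\tr G_k(z)^2=O_{L^q}(\eta/d_N)+O_{L^q}(d_N/(N\eta))$, which is the claimed $\max\{\eta/d_N,(N\eta/d_N)^{-1}\}$ bound. (If only the range $0<\gamma<1/3$ is needed one may replace Theorem \ref{thm:optimalbound} by the cruder Lemma \ref{le:offdiag}, since then $(d_N/(N\eta))^{1/2}\le\eta/d_N$.)

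\textbf{Main obstacle.} The crux is the Hilbert--Schmidt estimate in the first bound: the naive deterministic bound $\sum_i|\mu_i-z|^{-2}\le(N-1)(\im z)^{-2}$ is off by a factor $(\im z)^{-1}=d_N/\eta$ from the truth $(\im z)^{-1}\im\tr G_k(z)\asymp N(\im z)^{-1}$, and recovering this factor — which is exactly what keeps the power of $\eta^{-1}$ in the final bound from being too large — forces one to invoke the local semicircle law. A subsidiary technical point is the uniform applicability of the local law to the deleted matrix $\mathcal{H}_k$ along the contour $\Gamma$; this is routine, since $\mathcal{H}_k$ is again a normalised Wigner matrix up to an $O(N^{-1})$ rescaling of the spectral parameter (negligible compared with $\im z=\eta/d_N$) and the constants in Theorem \ref{thm:optimalbound} are uniform in that parameter.
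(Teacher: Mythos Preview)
Your proof is correct, and for the second estimate it is essentially identical to the paper's: write $N^{-1}\tr G_k^2=\partial_z(N^{-1}\tr G_k)$, apply Cauchy's formula on a circle of radius $\eta/(2d_N)$, and feed in the rigidity bound $\norm{N^{-1}\tr G_k(\omega)-s(\omega)}_{L^q}=O(d_N/(N\eta))$.

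For the first estimate the paper takes a different and somewhat shorter route. Rather than applying Hanson--Wright directly to the quadratic form with matrix $G_k(z)^2$, it observes that $\delta^{k,2}_N(z)=\partial_z\delta^{k,1}_N(z)$ and again uses Cauchy's formula on the circle of radius $\eta/(2d_N)$; then H\"older's inequality together with the already-established bound $\mathbb{E}|\delta^{k,1}_N(w)|^q\le C(N\eta/d_N)^{-q/2}$ from Lemma~\ref{d1bound} (uniform on the contour) immediately yields the claim. Your approach instead controls the Hilbert--Schmidt norm of $G_k(z)^2$ via the Ward-type identity $\sum_i|\mu_i-z|^{-2}=(\im z)^{-1}\im\tr G_k(z)$ and then invokes the local semicircle law to bound $\im\tr G_k(z)$ in $L^{q/2}$. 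Both arguments are valid and give the same bound; the paper's reduction to $\delta^{k,1}_N$ has the advantage of using only the previously proved lemma without a separate appeal to the local law, while your route is slightly more self-contained in that it does not rely on analyticity in $z$.
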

\begin{proof}
Since $G_{k}(z)$ is an analytic function of $z$ in the upper half plane and $\frac{d}{dz}G_{k}(z) = G^{2}_{k}(z)$, we write
\begin{equation}
	d_N^{-1}\delta_N^{k,2} = d_N^{-1}\frac{d}{dz}\left(h_k^\dagger G_{k} h_{k}-\frac{1}{N}\tr(G_{k})\right) = \frac{1}{2\pi i}\oint_{S_{z}}\frac{1}{(z-w)^{2}}\left(h_{k}^{\dagger}G_{k}(w)h_k-\frac{1}{N}\tr(G_{k}(w))\right)dw
\end{equation}
where $S_{z}$ is a small circle of radius $\eta/2d_N$ around the point $z$. Note that $S_{z}$ is of distance $O(\eta/d_N)$ from the real axis and that $|z-w| = \eta/2d_N$. By Holder's inequality and Lemma \ref{d1bound}, we have
\begin{align}
	\mathbb{E}|d_N^{-1}\delta_N^{k,2}|^{q} &\leq \frac{1}{(2\pi d_N)^{q}}\oint_{S_{z}}\ldots\oint_{S_{z}}\mathbb{E}\prod_{i=1}^{q}|\delta_N^{k,1}(w_i)|\frac{1}{|z-w_{i}|^{2}}|dw_{i}|\\
	&\leq \frac{1}{(2\pi d_N)^{q}}\oint_{S_{z}}\ldots\oint_{S_{z}}\prod_{i=1}^{q}\mathbb{E}(|\delta_N^{k,1}(w_i)|^{q})^{1/q}\frac{1}{|z-w_{i}|^{2}}|dw_{i}|\\
	&\leq C\left(\frac{N\eta}{d_N}\right)^{-q/2}\frac{1}{(2\pi d_N)^{q}}\left(\int_{S_{z}}\frac{1}{|z-w|^{2}}|dw|\right)^{q}\\
	&=C\left(\frac{N}{d_N}\right)^{-q/2}\eta^{-\frac{3q}{2}}.
\end{align}
As for $\tr(G_{k}^{2})$ we similarly have
\begin{equation}
\label{cauchint}
\frac{\eta}{d_N}\tr(G_{k}^{2}) = \frac{\eta}{d_N}\frac{d}{dz}\tr G_{k} = \frac{\eta}{d_N}s'(z) + \frac{\eta}{2\pi i d_N}\int_{S_{z}}\frac{1}{(w-z)^{2}}(N^{-1}\tr G_{k}(w)-m(w))dw
\end{equation}
where $S_{z}$ is a small circle of radius $\eta/2d_N$ with center $z = z(s)$. Then using the known rigidity estimates we get $\mathbb{E}|N^{-1}\tr(G_{k}(w))-s(z)|^{q} \leq (N\eta/d_N)^{-q}$ which we use to estimate \eqref{cauchint}. Then
\begin{align*}
	\mathbb{E}\left|\frac{\eta}{d_N} \tr(G_{k}^{2})\right|^{q} &\leq \sum_{i=0}^{q}\binom{q}{i}\left|\frac{\eta}{d_N} s'(z)\right|^{q-i}\mathbb{E}\left|\frac{\eta}{2\pi d_N}\int_{S_{z}}\frac{1}{(w-z)^{2}}(N^{-1}\tr G_{k}(w)-m(w))dw\right|^{q},\\
	&\leq \sum_{i=0}^{q}\binom{q}{i}\left|\frac{\eta}{d_N} s'(z)\right|^{q-i}\left(c_{i} \frac{N\eta}{d_N}\right)^{-i},\\
&\leq c\max\left(\frac{\eta}{d_N},\left(\frac{N\eta}{d_N}\right)^{-1}\right)^{q},
\end{align*}
as required.
\end{proof}

\begin{lemma}
\label{le:schurident}
The identity \eqref{errorsgkmt} holds.
\end{lemma}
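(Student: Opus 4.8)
The plan is to start from Schur's complement formula and expand the resulting rational expression in the "small" quantities $\mathcal{H}_{kk}$ and $\delta^{k,n}_N(z)$ defined in \eqref{deltakn}. Recall that
\begin{equation}
\tr(G(z))-\tr(G_{k}(z)) = \frac{1+h_{k}^{\dagger}G_{k}(z)^{2}h_{k}}{\mathcal{H}_{kk}-z-h_{k}^{\dagger}G_{k}(z)h_{k}},
\end{equation}
and that $h_k^\dagger G_k(z)^n h_k = N^{-1}\tr(G_k(z)^n) + \delta^{k,n}_N(z)$ by \eqref{deltakn}. Since $\frac{d}{dz}G_k(z) = G_k(z)^2$, we have $N^{-1}\tr(G_k(z)^2) = \frac{d}{dz}(N^{-1}\tr G_k(z))$, so the numerator becomes $1 + \frac{d}{dz}(N^{-1}\tr G_k(z)) + \delta^{k,2}_N(z)$ and the denominator becomes $\mathcal{H}_{kk} - (z + N^{-1}\tr G_k(z)) - \delta^{k,1}_N(z)$. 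First I would set $b_k(z) := -(z+N^{-1}\tr G_k(z))^{-1}$ and $A_k(z):= \mathcal{H}_{kk}-\delta^{k,1}_N(z)$, so that the denominator is $-b_k(z)^{-1} + A_k(z) = -b_k(z)^{-1}(1 - b_k(z)A_k(z))$. The key observation is that $\frac{d}{dz}b_k(z) = b_k(z)^2(1+N^{-1}\tr G_k(z)^2)$; hence $1+\frac{d}{dz}(N^{-1}\tr G_k(z)) = b_k(z)^{-2}\frac{d}{dz}b_k(z) - \delta^{k,2}_N(z)\cdot 0$ — more precisely, $b_k(z)^{-2}\frac{d}{dz}b_k(z) = 1 + N^{-1}\tr G_k(z)^2$, so the numerator equals $b_k(z)^{-2}\frac{d}{dz}b_k(z) + \delta^{k,2}_N(z)$.

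Next I would combine these: the Schur quotient equals
\begin{equation}
\frac{b_k^{-2}\,\frac{d}{dz}b_k + \delta^{k,2}_N}{-b_k^{-1}(1-b_kA_k)} = -\frac{b_k^{-1}\,\frac{d}{dz}b_k + b_k\,\delta^{k,2}_N}{1-b_kA_k}.
\end{equation}
Now expand $(1-b_kA_k)^{-1} = 1 + b_kA_k + (b_kA_k)^2(1-b_kA_k)^{-1}$, and separately observe that $\frac{d}{dz}\bigl(b_k A_k\bigr) = (\frac{d}{dz}b_k)A_k + b_k(\frac{d}{dz}A_k)$, while $\frac{d}{dz}A_k = -\delta^{k,2}_N$ since $\frac{d}{dz}\delta^{k,1}_N = h_k^\dagger G_k^2 h_k - N^{-1}\tr G_k^2 = \delta^{k,2}_N$. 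A short manipulation shows that the leading part $-b_k^{-1}\frac{d}{dz}b_k\,(1+b_kA_k) + (\text{lower order})$ can be rewritten as $\frac{d}{dz}\bigl(\frac{A_k}{z+N^{-1}\tr G_k(z)}\bigr)$ up to the two explicitly-named error terms in $\epsilon_{k,N}(z)$ — namely the $A_k^2 G_{kk}(z)/(z+N^{-1}\tr G_k)^2$ term (collecting the $(b_kA_k)^2(1-b_kA_k)^{-1}$ remainder and using $G_{kk}(z) = b_k/(1-b_kA_k) \cdot (\text{sign})$, i.e.\ $G_{kk}(z) = (\mathcal{H}_{kk}-z-h_k^\dagger G_k h_k)^{-1}$) and the $\delta^{k,2}_N A_k/(z+N^{-1}\tr G_k)$ term. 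Throughout I would use the identity $G_{kk}(z) = 1/(\mathcal{H}_{kk}-z-h_k^\dagger G_k(z)h_k) = -b_k(z)/(1-b_k(z)A_k(z))$ to repackage the second-order remainder into the stated form involving $G_{kk}(z)$.

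Finally, I would apply the operator $(\mathbb{E}_k-\mathbb{E}_{k-1})$ to both sides; since $(\mathbb{E}_k - \mathbb{E}_{k-1})$ annihilates $\tr G_k(z)$ (it is $\mathcal{F}_{k-1}$- and $\mathcal{F}_k$-measurable in the same way — more precisely $G_k$ does not involve row/column $k$, so $\mathbb{E}_k\tr G_k = \mathbb{E}_{k-1}\tr G_k$), we get exactly \eqref{errorsgkmt} with $\epsilon_{k,N}(z)$ as defined. The main obstacle is purely bookkeeping: correctly tracking which terms in the geometric-series expansion of $(1-b_kA_k)^{-1}$ assemble into the perfect $z$-derivative $\frac{\partial}{\partial z}\frac{\mathcal{H}_{kk}-\delta^{k,1}_N(z)}{z+N^{-1}\tr G_k(z)}$ versus which get absorbed into the two remainder terms; the differentiation identities $\frac{d}{dz}\delta^{k,1}_N = \delta^{k,2}_N$ and $\frac{d}{dz}(N^{-1}\tr G_k) = N^{-1}\tr G_k^2$ are what make the perfect-derivative structure emerge, and checking that the leftover is precisely $\epsilon_{k,N}(z)$ — no more, no less — is the one place where care is genuinely required. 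I note this computation is stated to be implicit in Section~4.1 of \cite{BY05}, so it can be cross-checked against that reference.
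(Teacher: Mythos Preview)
Your approach is essentially the same as the paper's: both proofs exploit the differentiation identities $\frac{d}{dz}\delta^{k,1}_N=\delta^{k,2}_N$ and $\frac{d}{dz}(N^{-1}\tr G_k)=N^{-1}\tr G_k^2$ to recognise the main terms of the Schur expansion as the exact $z$-derivative $\frac{\partial}{\partial z}\bigl[(\mathcal{H}_{kk}-\delta^{k,1}_N)/(z+N^{-1}\tr G_k)\bigr]$, with the second-order remainder repackaged via $G_{kk}=(\mathcal{H}_{kk}-z-h_k^\dagger G_k h_k)^{-1}$ and the operator $(\mathbb{E}_k-\mathbb{E}_{k-1})$ applied at the end. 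The only cosmetic difference is that the paper first subtracts the reference term $(1+N^{-1}\tr G_k^2)/(-z-N^{-1}\tr G_k)$ (which $(\mathbb{E}_k-\mathbb{E}_{k-1})$ annihilates anyway) and then expands using $G_{kk}=-\tilde{b}_k-\tilde{b}_k G_{kk}(\delta^{k,1}_N-\mathcal{H}_{kk})$, whereas you expand the Schur quotient directly via the geometric series in $b_kA_k$; these are two bookkeepings of the same algebra.

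One small slip: with your convention $b_k=-(z+N^{-1}\tr G_k)^{-1}$ one has $b_k^{-1}=-(z+N^{-1}\tr G_k)$, so the denominator $\mathcal{H}_{kk}-(z+N^{-1}\tr G_k)-\delta^{k,1}_N$ equals $A_k+b_k^{-1}$, not $-b_k^{-1}+A_k$ as you wrote. This is exactly the kind of sign bookkeeping you flagged and is easily corrected; it does not affect the validity of the argument.
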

\begin{proof}
We omit the explicit $z$-dependence on the resolvent and make use of the quantities given in $\delta^{k,n}_{N}$ in \eqref{deltaknapp} and two further quantities
\begin{equation}
\tilde{b}_{k} := \frac{1}{z+N^{-1}\tr G_{k}}, \qquad G_{kk} = \frac{1}{\mathcal{H}_{kk}-z-h_{k}^{\dagger}G_{k}h_{k}}
\end{equation}
Using the identity $z+N^{-1}\tr G_{k} = (-\delta^{k,1}_{N}+\mathcal{H}_{kk}-G_{kk}^{-1})$ we obtain
\begin{align}
&\frac{1+h_{k}^{\dagger}G_{k}^{2}h_{k}}{\mathcal{H}_{kk}-z-h_{k}^{\dagger}G_{k}^{2}h_{k}}-\frac{1+N^{-1}\tr G_{k}^{2}}{-z-N^{-1}\tr G_{k}} \label{gkdecomp}\\
&=\frac{(1+h_{k}^{\dagger}G_{k}^{2}h_{k})(\mathcal{H}_{kk}-\delta^{k,1}_{N})}{\tilde{b}_{k}^{-1}G_{kk}^{-1}}-\frac{G_{kk}^{-1}(1+h_{k}^{\dagger}G_{k}^{2}h_{k})}{G_{kk}^{-1}\tilde{b}_{k}^{-1}}+\frac{G_{kk}^{-1}(1+N^{-1}\tr G_{k}^{2})}{G_{kk}^{-1}\tilde{b}_{k}^{-1}}\\
&=\frac{G_{kk}(1+h_{k}^{\dagger}G_{k}^{2}h_{k})(\mathcal{H}_{kk}-\delta^{k,1}_{N})}{\tilde{b}_{k}^{-1}}-\delta^{k,2}_{N}\tilde{b}_{k}\\
&= -\tilde{b}_{k}^{2}(1+h_{k}^{\dagger}G_{k}^{2}h_{k})(\mathcal{H}_{kk}-\delta^{k,1}_{N})+\tilde{b}_{k}^{2}(1+h_{k}^{\dagger}G_{k}^{2}h_{k})(\mathcal{H}_{kk}-\delta^{k,1}_{N})^{2}-\delta^{k,2}_{N}\tilde{b}_{k} \label{gkkexpand}\\
&=-\tilde{b}_{k}^{2}(1+N^{-1}\tr G_{k}^{2})(\mathcal{H}_{kk}-\delta^{k,1}_{N})-\delta^{k,2}_{N}\tilde{b}_{k} \label{gk2expand}\\
&-\tilde{b}_{k}^{2}\delta^{k,2}_{N}(\mathcal{H}_{kk}-\delta^{k,1}_{N})+\tilde{b}_{k}^{2}(1+h_{k}^{\dagger}G_{k}^{2}h_{k})(\mathcal{H}_{kk}-\delta^{k,1}_{N})^{2} \label{errorsgk}
\end{align}
where to obtain \eqref{gkkexpand} we expanded using the simple identity $G_{kk} = -\tilde{b}_{k}-\tilde{b}_{k}G_{kk}(\delta^{k,1}_{N}-\mathcal{H}_{kk})$. Then to obtain \eqref{gk2expand} from \eqref{gkkexpand} we used that $h_{k}^{\dagger}G_{k}^{2}h_{k} = N^{-1}\tr G_{k}^{2}+\delta^{k,2}_{N}$. The two terms in \eqref{gk2expand} combine as an exact derivative
\begin{equation}
\frac{\partial}{\partial z}\tilde{b}_{k}(\mathcal{H}_{kk}-\delta^{k,1}_{N}) = -\tilde{b}_{k}^{2}(1+\tr G_{k}^{2})(\mathcal{H}_{kk}-\delta^{k,1}_{N})-\delta^{k,2}_{N}\tilde{b}_{k}
\end{equation}
while the remaining two terms in \eqref{errorsgk} combine to give the error term $\epsilon_{k,N}(z)$ in \eqref{errorsgkmt}. Finally, we conclude the proof of the identity by applying $(\mathbb{E}_{k}-\mathbb{E}_{k-1})$ to \eqref{gkdecomp}, noting that the second term vanishes.
\end{proof}

\begin{proposition}
	\label{prop:bulkBound}
	Fix $\tilde{\eta} > 0$. Then under the same conditions as Theorem \ref{thm:optimalbound} we have that there exists a positive constants $N_0$, $M_0$, $C$, $c_0$, and $c_1 = c_1(C,c)$ such that
\begin{equation}
\label{innervarbound}
\mathbb{E}|V_N(z)|^2 = \mathrm{Var}\bigg\{d_N^{-1}\tr G(E+z/d_N)\bigg\} \leq c_1\eta^{-2},
\end{equation}
for all $N > N_0$ such that $N\eta/d_N \geq M_0$, $\eta/d_N \leq \tilde{\eta}$, and $|E+ t/d_N| \leq 2 + \eta/d_N$.
\end{proposition}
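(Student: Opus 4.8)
The plan is to deduce the bound directly from the optimal local semicircle law of Theorem \ref{thm:optimalbound}. First I would record the algebraic identity $V_N(z) = \frac{N}{d_N}\big(s_N(E+z/d_N) - \mathbb{E}s_N(E+z/d_N)\big)$, which follows from $\tr G = N s_N$. Abbreviating $w := E + z/d_N$ and setting $Y := \frac{N}{d_N}\,|s_N(w) - s(w)|$, the triangle inequality together with Jensen's inequality gives $|V_N(z)| \leq Y + \mathbb{E}Y$, and hence $\mathbb{E}|V_N(z)|^2 \leq 2\mathbb{E}Y^2 + 2(\mathbb{E}Y)^2 \leq 4\,\mathbb{E}Y^2$. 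So it suffices to prove $\mathbb{E}Y^2 \leq c\,\eta^{-2}$ with $c$ depending only on the constants of Theorem \ref{thm:optimalbound}.

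Next I would pass to the tail-integral representation $\mathbb{E}Y^2 = \int_0^\infty 2t\,\mathbb{P}(Y > t)\,dt$ and change variables $t = K/\eta$, i.e. rewrite the event $\{Y > t\}$ as $\{|s_N(w) - s(w)| > Kd_N/(N\eta)\}$. This turns the second moment into $\mathbb{E}Y^2 = \frac{2}{\eta^2}\int_0^\infty K\,\mathbb{P}\big(|s_N(w) - s(w)| > \tfrac{Kd_N}{N\eta}\big)\,dK$, so that it remains only to bound the $K$-integral by an absolute constant, uniformly over the admissible range of parameters.

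For this I would invoke Theorem \ref{thm:optimalbound} with the \emph{fixed} choice $q = 3$. Enlarging $M_0$ and $N_0$ if necessary so that $3 \leq c_0 (N\eta/d_N)^{1/8}$ holds whenever $N\eta/d_N \geq M_0$ (that is, taking $M_0 := \max\{(3/c_0)^8,\ldots\}$), the theorem yields $\mathbb{P}\big(|s_N(w) - s(w)| > Kd_N/(N\eta)\big) \leq C_3\,K^{-3}$ with $C_3 := (3C)^{9c}$ an absolute constant. Combining this with the trivial bound $\mathbb{P}(\cdot) \leq 1$, I would split the $K$-integral at $K = C_3^{1/3}$: on $[0, C_3^{1/3}]$ the integrand is at most $K$, contributing at most $\tfrac12 C_3^{2/3}$, while on $[C_3^{1/3}, \infty)$ it is at most $C_3 K^{-2}$, contributing at most $C_3^{2/3}$. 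Hence $\int_0^\infty K\,\mathbb{P}(\cdot)\,dK \leq \tfrac32 C_3^{2/3}$, and therefore $\mathbb{E}|V_N(z)|^2 \leq 4 \cdot \frac{2}{\eta^2}\cdot \frac32 C_3^{2/3} = 12\,C_3^{2/3}\,\eta^{-2}$, which is the claim with $c_1 = 12\,(3C)^{6c}$.

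There is no substantial obstacle once Theorem \ref{thm:optimalbound} is in hand; the only point requiring care is that the tail exponent must be taken strictly larger than $2$ for the $K$-integral to converge at infinity, which is why a fixed $q = 3$ is used and precisely why the hypothesis $N\eta/d_N \geq M_0$ appears. It is worth emphasising that the \emph{optimality} of Theorem \ref{thm:optimalbound} is essential here: an earlier version of the local law carrying an extra $N^{\epsilon}$ or $(\log N)^2$ factor in the error would only produce $\mathbb{E}|V_N(z)|^2 \lesssim N^{\epsilon}\eta^{-2}$ or $(\log N)^2\eta^{-2}$, which would be too weak to survive the integration over mesoscopic scales in the Helffer--Sj\"ostrand representation \eqref{genint} used in Section \ref{se:testfn}.
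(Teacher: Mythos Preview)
Your proposal is correct and follows essentially the same route as the paper's proof: reduce to $4\,\mathbb{E}Y^2$ via the triangle and Jensen inequalities, rewrite $\mathbb{E}Y^2$ as a tail integral, change variables $t=K/\eta$, and then apply Theorem~\ref{thm:optimalbound} with the fixed choice $q=3$ to make the $K$-integral converge. The only difference is cosmetic: you make the splitting of the $K$-integral and the resulting constant $c_1$ explicit, whereas the paper simply remarks that the contribution from $K\geq 1$ is handled by $q=3$ and leaves the convergent estimate implicit.
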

\begin{proof}
By the triangle inequality, we can write
\begin{equation}
\begin{split}
	&\mathbb{E}|V_N(z)|^2 = \mathbb{E}|(N/d_{N})(s_{N}(E+z/d_N)-\mathbb{E}s_{N}(E+z/d_N))|^{2}\\
&\leq 2\mathbb{E}|(N/d_{N})(s_{N}(E+z/d_N)-s(E+z/d_N))|^{2}+ 2|(N/d_{N})\mathbb{E}(s_{N}(E+z/d_N)-s(E+z/d_N))|^{2}\\
&\leq4\mathbb{E}|(N/d_{N})(s_{N}(E+z/d_N)-s(E+z/d_N))|^{2}
\end{split}
\end{equation}
where the last line follows from Jensen's inequality. The latter expectation is by definition the integral
\begin{multline}
\label{intvarsn}
\int_{0}^{\infty}2u\mathbb{P}(|s_{N}(E+z/d_N)-s(E+z/d_N)|>ud_{N}/N)\,du\\
= \eta^{-2}\int_0^\infty 2K \mathbb{P}\left(|s_N(E+z/d_N)-s(E+z/d_N)| > \frac{K d_N}{\eta N}\right) \,dK
\end{multline}
It suffices to bound the contribution to the above integral when $K \geq 1$. We apply Theorem $\ref{thm:optimalbound}$ with \textit{e.g.} $q=3$ to obtain a convergent estimate.
\end{proof}

\section{Function extension}
The following Lemma is useful for proving results about the linear statistics \eqref{lsmeso} of a random matrix via analogous results for the resolvent. It can be found in \textit{Proof of Lemma 5.5.5} of the book \cite{AGZ09}.
\begin{lemma}
	\label{lem:extension}
	Let $f:\mathbb{R} \to \mathbb{R}$ be a compactly supported function with first derivative continuous (denote it by $f \in C_c^1(\mathbb{R})$) and consider an extension $\Psi_f:\mathbb{R}^2 \to \mathbb{C}$ that inherits the same regularity and such that $\Psi_f(x,0) = f(x)$ for all $x$ and $\im(\Psi_f(x,0)) = 0$. Further assume that $\overline{\partial}\Psi_f(x,y) = O(y)$ as $y \to 0$. Then one has
	\begin{equation}
		f(\lambda) + iH[f] = \frac{1}{\pi}\int_0^\infty \int_{-\infty}^\infty \frac{\overline{\partial} \Psi_f(x,y)}{\lambda - x - iy}\,dx\,dy,
	\end{equation}
	where $\overline{\partial}:= \frac{\partial}{\partial x} + i \frac{\partial}{\partial y}$, and $H:L^2(\mathbb{R}) \to L^2(\mathbb{R})$ is the \textit{Hilbert Transform} of $f$:
	\begin{equation}
		H[f] = \frac{1}{\pi}\mathrm{p.v.} \int_{-\infty}^{\infty}\,dt \frac{f(t)}{x-t}.
	\end{equation}
\end{lemma}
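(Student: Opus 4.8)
The plan is to prove this as the classical Helffer--Sj\"ostrand (almost-analytic extension) formula, exactly as indicated by the reference to the proof of Lemma~5.5.5 of \cite{AGZ09}: by applying the two-dimensional Cauchy--Pompeiu / Stokes formula on the upper half-plane. Since $f\in C_c^1(\mathbb{R})$ and $\Psi_f$ is required to inherit the same regularity, $\Psi_f$ is $C^1$ with compact support in $\mathbb{R}^2$; fix $\lambda\in\mathbb{R}$ and choose $R$ so large that $\mathrm{supp}\,\Psi_f\subset\{|z|<R\}$. First I would work on the region $\Omega_{R,\delta}=\{|z|<R,\ \im z>0\}\setminus\{|z-\lambda|<\delta\}$ and apply Stokes' theorem to the $1$-form $\omega=\Psi_f(z)(\lambda-z)^{-1}\,dz$. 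Away from $z=\lambda$ the kernel $(\lambda-z)^{-1}$ is holomorphic, so $d\omega$ is a constant multiple of $\overline{\partial}\Psi_f(z)(\lambda-z)^{-1}\,dx\,dy$; thus the area integral $\tfrac{1}{\pi}\iint_{\mathbb{H}}\overline{\partial}\Psi_f(x,y)(\lambda-x-iy)^{-1}\,dx\,dy$ equals a sum of line integrals over $\partial\Omega_{R,\delta}$. The circular arc $|z|=R$ contributes nothing by compact support, leaving only the integral over $\{\im z=0,\ |x-\lambda|>\delta\}$ and over the small semicircle $\{|z-\lambda|=\delta,\ \im z>0\}$.

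Next I would let $\delta\to 0$. On the real axis $\Psi_f(x,0)=f(x)$, and the contribution of the flat part converges to the principal value $\tfrac{1}{\pi}\,\mathrm{p.v.}\!\int_{\mathbb{R}}f(x)(\lambda-x)^{-1}\,dx=H[f](\lambda)$; the small semicircle, on which $\Psi_f(z)\to\Psi_f(\lambda)=f(\lambda)$, produces a half-residue term equal to $f(\lambda)$ up to the factor $i$. Combining the two pieces (this is precisely the Sokhotski--Plemelj jump relation) reproduces $f(\lambda)+iH[f](\lambda)$ on the right-hand side, the sign of the Hilbert-transform term being a matter of orientation conventions. The one place where a hypothesis is genuinely used is the absolute convergence of the area integral up to the real axis and throughout the $\delta\to0$ passage: near $z=\lambda$ one has $|\overline{\partial}\Psi_f(x,y)|=O(y)=O(\im z)=O(|z-\lambda|)$, which cancels the $|z-\lambda|^{-1}$ singularity of the Cauchy kernel and makes $\overline{\partial}\Psi_f(z)(\lambda-z)^{-1}$ locally bounded, so dominated convergence applies. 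It then remains only to recall the classical Riesz theorem that $H$ extends to a bounded operator on $L^2(\mathbb{R})$, which is the source of the last clause of the statement.

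The main (and essentially the only) obstacle is the boundary analysis in the $\delta\to0$ limit: one must verify that the line integral over $\partial\Omega_{R,\delta}$ splits cleanly into the genuine principal value (producing $H[f]$) and the semicircular contribution (producing $f(\lambda)$), with no additional surviving terms; everything else is routine bookkeeping with Wirtinger derivatives and with the identity $d(g\,dz)\propto\overline{\partial}g\,dx\,dy$. A fully self-contained alternative, requiring no contour manipulation at all, is simply to quote the proof of Lemma~5.5.5 of \cite{AGZ09} verbatim.
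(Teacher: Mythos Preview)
Your proposal is correct and is essentially the same argument as the paper's: both apply Green/Stokes on the half-annulus $\{\epsilon<|z-\lambda|<R,\ \im z>0\}$ and read off the boundary contributions as $\epsilon\to 0$. The only difference is cosmetic---the paper translates $\lambda$ to the origin, splits into real and imaginary parts, and passes to polar coordinates (so the half-annulus becomes the rectangle $(\epsilon,R)\times(0,\pi)$ and Green's theorem is applied there), whereas you keep the complex $1$-form $\Psi_f(z)(\lambda-z)^{-1}\,dz$ and invoke Cauchy--Pompeiu/Sokhotski--Plemelj directly.
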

\begin{remark}
	\label{rem:specificPsi}
	Our particular choice of $\Psi_f$ in this paper will be the following (see \cite{davies1995functional}):
	\begin{equation}
		\Psi_f (x,y) = (f(x) + i(f(x+y)-f(x)))J(y),
	\end{equation}
	where $J(y)$ is a smooth function of compact support, equal to 1 in a neighborhood of 0 and equal to 0 if $y > 1$.
\end{remark}
\begin{proof}
We make use of the the substitution $x \to x+\lambda$ and compute the real part (denoting $\Psi_{f} = u+iv$)
\begin{align}
&-\frac{1}{\pi}\re\int_{0}^{\infty}dy\,\int_{-\infty}^{\infty}dx\,\frac{\overline{\partial}\Psi_{f}(x+\lambda,y)}{x+iy},\\
&=-\frac{1}{\pi}\int_{0}^{\infty}dy\, \int_{-\infty}^{\infty}dx\, \frac{x}{x^{2}+y^{2}}\left(\frac{\partial u(x,y)}{\partial x}-\frac{\partial v(x,y)}{\partial y}\right)+\frac{y}{x^{2}+y^{2}}\left(\frac{\partial u(x,y)}{\partial y}+\frac{\partial v(x,y)}{\partial x}\right),\\
&=-\frac{1}{\pi}\lim_{\epsilon \to 0}\int_{A_{\epsilon,R}}dx\,dy\, \frac{x}{x^{2}+y^{2}}\left(\frac{\partial u(x,y)}{\partial x}-\frac{\partial v(x,y)}{\partial y}\right)+\frac{y}{x^{2}+y^{2}}\left(\frac{\partial u(x,y)}{\partial y}+\frac{\partial v(x,y)}{\partial x}\right),
\end{align}
where $A_{\epsilon,R} = \{\epsilon < \sqrt{x^{2}+y^{2}} < R\}$ and $R$ is sufficiently large (by the compact support hypothesis). Changing to polar coordinates in the latter integral, a simple computation shows that the integral transforms as
\begin{align}
&-\frac{1}{\pi}\lim_{\epsilon \to 0}\int_{0}^{\pi}d\theta\,\int_{\epsilon}^{R}dr\,\frac{\partial u(r\cos(\theta),r\sin(\theta))}{\partial r}-\frac{\partial}{\partial \theta}\frac{v(r\cos(\theta),r\sin(\theta))}{r}\\
&=-\frac{1}{\pi}\lim_{\epsilon \to 0}\oint_{R_{\epsilon,R}}u(r\cos(\theta),r\sin(\theta))\,d\theta+\frac{v(r\cos(\theta),r\sin(\theta))}{r}\,dr\\
&=-\frac{1}{\pi}\lim_{\epsilon \to 0}\left(\int_{0}^{\pi}\,d\theta\,u(R\cos(\theta),R\sin(\theta))-\int_{0}^{\pi}\,d\theta\,u(\epsilon \cos(\theta),\epsilon \sin(\theta))+\int_{\epsilon < |r| < R} \frac{v(r,0)}{r}\,dr\right)
\end{align}
where in the second line we applied Green's theorem to reduce the double integral to an integral over the boundary of the positively oriented rectangle $R_{\epsilon,R}$ with vertices $(\epsilon,0),(R,0),(R,\pi),(\epsilon,\pi)$. Now choosing $R$ large enough and using the assumption $\mathrm{Im}(\Psi_{f}(r+\lambda,0))=v(r,0)=0$ we see that the above limit is equal to $u(0,0) = \Psi_{f}(\lambda,0) = f(\lambda)$. The proof for the imaginary part is similar.
\end{proof}

\bibliographystyle{plain}
\bibliography{meso}
\end{document}